\patchcmd{\@thm}{\let\thm@indent\indent}{\let\thm@indent\noindent}{}{}
\patchcmd{\@thm}{\thm@headfont{\scshape}}{\thm@headfont{\bfseries}}{}{}
\let\G\undefined
\definecolor{dkgreen}{rgb}{0,0.6,0}
\definecolor{gray}{rgb}{0.5,0.5,0.5}
\definecolor{mauve}{rgb}{0.58,0,0.82}
\tiny\color{gray},
\newcommand{\xar}[1]{\xrightarrow{{#1}}}
\def\@tocline#1#2#3#4#5#6#7{\relax
  \ifnum #1>\c@tocdepth % then omit
  \else
    \par \addpenalty\@secpenalty\addvspace{#2}%
    \begingroup \hyphenpenalty\@M
    \@ifempty{#4}{%
      \@tempdima\csname r@tocindent\number#1\endcsname\relax
    }{%
      \@tempdima#4\relax
    }%
    \parindent\z@ \leftskip#3\relax \advance\leftskip\@tempdima\relax
    \rightskip\@pnumwidth plus4em \parfillskip-\@pnumwidth
    #5\leavevmode\hskip-\@tempdima
      \ifcase #1
       \or\or \hskip 1em \or \hskip 2em \else \hskip 3em \fi%
      #6\nobreak\relax
    \hfill\hbox to\@pnumwidth{\@tocpagenum{#7}}\par% <---- \dotfill -> \hfill
    \nobreak
    \endgroup
  \fi}
\crefname{section}{\S}{\S\S}
\crefname{subsection}{\S}{\S\S}
\crefname{axioms}{Axiom}{Axioms}
\crefname{exercise}{Exercise}{Exercises}
\crefname{exercisenum}{Exercise}{Exercises}
\crefname{construction}{Construction}{Constructions}
\crefname{problem}{Problem}{Problems}
\crefname{theorem}{Theorem}{Theorems}
\crefname{definition}{Definition}{Definitions}
\crefname{prop}{Proposition}{Propositions}
\crefname{lemma}{Lemma}{Lemmas}
\crefname{example}{Example}{Examples}
\crefname{examplealph}{Example}{Examples}
\crefname{corollary}{Corollary}{Corollaries}
\crefname{nonexample}{Nonexample}{Nonexamples}
\crefname{equation}{}{}
\crefname{summary}{Summary}{Summaries}
\crefname{recollection}{Recollection}{Recollections}
\Crefname{recollection}{Recollection}{Recollections}
\Crefname{nonexample}{Nonexample}{Nonexamples}
\Crefname{corollary}{Corollary}{Corollaries}
\Crefname{corollary}{Corollary}{Corollaries}
\Crefname{axioms}{Axiom}{Axioms}
\Crefname{exercise}{Exercise}{Exercises}
\Crefname{exercisenum}{Exercise}{Exercises}
\Crefname{construction}{Construction}{Constructions}
\Crefname{problem}{Problem}{Problems}
\Crefname{theorem}{Theorem}{Theorems}
\Crefname{definition}{Definition}{Definitions}
\Crefname{prop}{Proposition}{Propositions}
\Crefname{lemma}{Lemma}{Lemmas}
\Crefname{example}{Example}{Examples}
\Crefname{examplealph}{Example}{Examples}
\Crefname{section}{\S}{\S\S}
\Crefname{subsection}{\S}{\S\S}
\DeclareMathOperator{\Hom}{Hom}
\DeclareMathOperator{\spec}{Spec}
\DeclareMathOperator{\spf}{Spf}
\DeclareMathOperator{\Tr}{Tr}
\DeclareMathOperator{\Sym}{Sym}
\newtheorem{lemma}{Lemma}[section]
\newtheorem{theorem}[lemma]{Theorem}
\newtheorem{prop}[lemma]{Proposition}
\newtheorem{conjecture}[lemma]{Conjecture}
\newtheorem*{conj-moore}{Conjecture~\ref{moore-splitting}}
\theoremstyle{definition}
\newtheorem{definition}[lemma]{Definition}
\newtheorem{warning}[lemma]{Warning}
\newtheorem{example}[lemma]{Example}
\newtheorem{construction}[lemma]{Construction}
\newtheorem{remark}[lemma]{Remark}
\newtheorem{notation}[lemma]{Notation}
\newtheorem{observe}[lemma]{Observation}
\renewcommand{\AA}{\mathbf{A}}
\newcommand{\FF}{\mathbf{F}}
\newcommand{\Z}{\mathbf{Z}}
\newcommand{\QQ}{\mathbf{Q}}
\newcommand{\cc}{\mathbf{C}}
\newcommand{\cC}{\mathcal{C}}
\newcommand{\cd}{\mathcal{D}}
\newcommand{\Sp}{\mathrm{Sp}}
\newcommand{\co}{\mathcal{O}}
\newcommand{\ce}{\mathcal{E}}
\newcommand{\PP}{\mathbf{P}}
\newcommand{\GG}{\mathbf{G}}
\newcommand{\cL}{\mathcal{L}}
\newcommand{\RP}{\mathbf{R}P}
\newcommand{\der}{\mathrm{der}}
\newcommand{\Nm}{\mathrm{Nm}}
\newcommand{\ul}[1]{\underline{#1}}
\newcommand{\ol}[1]{\overline{#1}}
\newcommand{\Perf}{\mathrm{Perf}}
\newcommand{\E}[1]{{\mathbf{E}_{{#1}}}}
\newcommand{\mmod}{/\!\!/}
\newcommand{\id}{\mathrm{id}}
\newcommand{\SL}{\mathrm{SL}}
\newcommand{\GL}{\mathrm{GL}}
\newcommand{\SO}{\mathrm{SO}}
\newcommand{\ku}{\mathrm{ku}}
\renewcommand{\H}{\mathrm{H}}
\newcommand{\Spin}{\mathrm{Spin}}
\newcommand{\Res}{\mathrm{Res}}
\newcommand{\Ind}{\mathrm{Ind}}
\newcommand{\fr}[1]{\mathfrak{#1}}
\newcommand{\g}{\mathfrak{g}}
\newcommand{\G}{\mathrm{G}}
\newcommand{\cP}{\mathcal{P}}
\newcommand{\Rep}{\mathrm{Rep}}
\newcommand{\cM}{\mathcal{M}}
\newcommand{\Shv}{\mathrm{Shv}}
\newcommand{\Higgs}{\mathrm{Higgs}}
\newcommand{\so}{\mathfrak{so}}
\newcommand{\PGL}{\mathrm{PGL}}
\renewcommand{\det}{\mathrm{det}}
\newcommand{\IC}{\mathrm{IC}}
\newcommand{\reg}{\mathrm{reg}}
\renewcommand{\S}{Section }
\providecommand{\leftsquigarrow}{%
  \mathrel{\mathpalette\reflect@squig\relax}%
}
\newcommand{\reflect@squig}[2]{%
  \reflectbox{$\m@th#1\rightsquigarrow$}%
}
\newcommand{\bull}{\bullet}
\newcommand{\pw}[1]{[\![#1]\!]}
\newcommand{\ls}[1]{(\!(#1)\!)}
\newcommand{\act}{\circlearrowleft}
\newcommand{\rank}{\mathrm{rank}}
\newcommand{\sh}{\mathrm{sh}}
\renewcommand{\sl}{\mathfrak{sl}}
\renewcommand{\min}{\mathrm{min}}
\newcommand{\modc}{{\text{-}\mathrm{mod}}}
\newcommand{\Gr}{\mathrm{Gr}}
\newcommand{\rot}{\mathrm{rot}}
\newcommand{\ld}[1]{{\check{{#1}}}}
\DeclareSymbolFontAlphabet{\mathbb}{AMSb} %to ensure that the meaning of \mathbb does not change
\DeclareSymbolFontAlphabet{\mathbbl}{bbold}
\newcommand{\pdb}[1]{\langle{#1}\rangle}
\newcommand{\pgl}{\mathfrak{pgl}}
\newcommand{\std}{\mathrm{std}}
\newcommand{\Sat}{\mathrm{Sat}}
\newcommand{\PSO}{\mathrm{PSO}}
\newcommand{\DW}{\mathrm{DW}}
\newcommand{\ringcoeff}{\QQ}
\title{Derived geometric Satake for $\PGL_2^{\times 3}/\PGL_2^\mathrm{diag}$}
\author{S. K. Devalapurkar}
\address{1 Oxford St, Cambridge, MA 02139}
\email{sdevalapurkar@math.harvard.edu, \today}
\thanks{Part of this work was done when the author was supported by NSF DGE-2140743}
\begin{document}

\maketitle

\begin{abstract}
    In this note, we study the local relative geometric Langlands conjecture of Ben-Zvi--Sakellaridis--Venkatesh for the spherical subgroup $\PGL_2^\mathrm{diag}$ of the triple product $\PGL_2^{\times 3}$ (and also for the spherical subgroup $\G_2$ of $\SO_8/\mu_2$), whose corresponding Langlands dual $\SL_2^{\times 3}$-variety can be identified with the symplectic vector space $(\AA^2)^{\otimes 3} \cong \AA^8$ of $2\times 2 \times 2$-cubes. Our analysis relies on a construction of Bhargava relating $2 \times 2 \times 2$-cubes to Gauss composition on quadratic forms, arising here as the moment map for the Hamiltonian $\SL_2^{\times 3}$-action on $(\AA^2)^{\otimes 3}$, and the Cayley hyperdeterminant as studied by Gelfand-Kapranov-Zelevinsky.
\end{abstract}

\tableofcontents

\section{Introduction}

The goal of this brief note is to study the geometrization of a story from the arithmetic context pioneered by B\"ocherer, Gross, Harris, Ichino, Kudla, Prasad, Schulze-Pillot, and Watson, among many others (see, e.g., \cite{harris-kudla, ichino-triple-product, prasad-thesis, watson-triple-product}): this tale is about the ``triple product period''.
% https://www.weizmann.ac.il/math/RTAA/sites/math.HWRT/files/uploads/2020bernstein-handout_up.pdf has some names
In the language of geometric representation theory, our goal is to study derived geometric Satake (\cite[Conjecture 7.5.1]{bzsv}) for the spherical $\PGL_2^{\times 3}$-variety $\PGL_2^{\times 3}/\PGL_2^\mathrm{diag}$. This is a natural generalization of the ``group case'', i.e., derived geometric Satake for the spherical $\PGL_2^{\times 2}$-variety $\PGL_2^{\times 2}/\PGL_2^\mathrm{diag}$ (which amounts simply to the derived geometric Satake equivalence of \cite{bf-derived-satake} for $\PGL_2$). Similar methods allow us to analyze the spherical $\PSO_8$-variety $\PSO_8/\G_2$, too. In order to state our main result, some notation is necessary.

\begin{notation}
    Let $\std$ denote the standard representation of $\SL_2$, so that $\std^{\otimes 3}$ consists of cubes
    $$\xymatrix@=.75em{
    & b_2 \ar@{-}[rr] \ar@{-}'[d][dd] && d_3 \ar@{-}[dd] \\
    a \ar@{-}[ru] \ar@{-}[rr] \ar@{-}[dd] && b_1 \ar@{-}[ru] \ar@{-}[dd] & \\
    & d_1 \ar@{-}'[r][rr] && c. \\
    b_3 \ar@{-}[ru] \ar@{-}[rr] && d_2 \ar@{-}[ru] &
    }$$
    Fix an integer $n$.
    Equip $\std^{\otimes 3}$ with the grading where the entries of a cube have the following weights: $a$ lives in weight $-4n$, each $b_i$ lives in weight $-2n$, $c$ lives in weight $2n$, and each $d_i$ lives in weight $0$. Write $\std^{\otimes 3}(4n,\vec{2n},-2n,\vec{0})$ to denote the corresponding graded variety.
    
    Similarly, equip $\SL_2$ with the grading where the entries of a matrix $\begin{psmallmatrix}
        a & b\\
        c & d
    \end{psmallmatrix}$ have the following weights: $a$ and $d$ live in weight $0$, $b$ lives in weight $2n$, and $c$ lives in weight $-2n$. Write $\SL_2(-2n\rho)$ to denote this graded group. Then there is a natural graded action of $\SL_2(-2n\rho)^{\times 3}$ on $\std^{\otimes 3}(4n,\vec{2n},-2n,\vec{0})$.
\end{notation}
Recall that the process of \textit{shearing} (denoted $\sh^{1/2}$) discussed in \cite{raksit, rotinv}, as well as \cite[Section 2.1]{ku-rel-langlands}, converts gradings into homological shifts (more precisely, it sends a module in weight $n$ to the same module shifted homologically by $n$). This functor is symmetric monoidal when restricted to the subcategory of modules in \textit{even} weights, and therefore extends to an operation on evenly graded stacks. If $Y$ is a graded stack, let $\sh^{1/2} Y$ denote the corresponding derived stack obtained by shearing, and let $\Perf^\sh(Y)$ denote $\Perf(\sh^{1/2} Y)$. Almost all of our discussion below takes place in the setting of ordinary, and not derived, algebraic geometry, and so the reader unfamiliar with derived algebraic geometry should feel free to ignore this procedure of shearing.\footnote{The reader intent on ignoring shearing might feel some solace in noting that our main results should continue to hold with the $\infty$-category of constructible sheaves replaced by a mixed variant. However, this must (hopefully temporarily) be regarded as a reverie, since no ``mixed'' version of the derived geometric Satake equivalence seems to exist in the literature.}

As in \cite{ku-rel-langlands}, we will state all of our results with ``arithmetic shearing'' in the sense of \cite[Section 6.7]{bzsv}. The following result concerns \cite[Conjecture 7.5.1]{bzsv} for the Hamiltonian $\PGL_2^{\times 3}$-variety $T^\ast(\PGL_2^{\times 3}/\PGL_2^\mathrm{diag})$ and the Hamiltonian $\SL_2^{\times 3}$-variety $\std^{\otimes 3}$.
\begin{theorem}[Derived geometric Satake for $\PGL_2^{\times 3}/\PGL_2^\mathrm{diag}$]\label{thm: main}
    Suppose that the $\PGL_2^{\times 3}\pw{t}$-action on $\PGL_2^{\times 3}\ls{t}/\PGL_2^\mathrm{diag}\ls{t}$ is optimal in the sense of \cite[Hypothesis 3.5.2]{ku-rel-langlands}.
    There is an equivalence\footnote{The $\infty$-category on the left-hand side is as in \cite[Definition 3.6.1]{ku-rel-langlands}; see \cref{def: Shv-Sat LG/H} for a quick review.}
    $$\Shv_{\PGL_2^{\times 3}\pw{t}}^{c,\Sat}(\PGL_2^{\times 3}\ls{t}/\PGL_2^\mathrm{diag}\ls{t}); \ringcoeff) \simeq \Perf^\sh(\std^{\otimes 3}(4,\vec{2},-2,\vec{0})/\SL_2(-2\rho)^{\times 3}).$$
    Moreover, this equivalence is equivariant for the action of the spherical Hecke category for $\PGL_2^{\times 3}$; on the spectral side, this action is encoded by Bhargava's construction $\std^{\otimes 3} \to \sl_2^{\ast, \times 3}$ of three quadratic forms from a point of $\std^{\otimes 3}$ (see \cite{bhargava-composition-i, wood-thesis}).
\end{theorem}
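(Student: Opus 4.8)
The strategy is to feed the spherical variety $X = \PGL_2^{\times 3}/\PGL_2^\mathrm{diag}$ into the general relative derived geometric Satake equivalence of \cite{ku-rel-langlands}, and then to carry out by hand the resulting computation on the spectral side, where Bhargava's cube picture supplies the bookkeeping. Granting the optimality hypothesis, that general result (applied via \cite[Definition 3.6.1]{ku-rel-langlands}) identifies the left-hand category with $\Perf^\sh$ of a graded Hamiltonian stack $\widetilde{\mathcal{M}}^\vee_X/\SL_2^{\times 3}$, where $\check{G} = \SL_2^{\times 3}$ is the Langlands dual of $\PGL_2^{\times 3}$, the affine graded scheme $\widetilde{\mathcal{M}}^\vee_X$ is $\spec$ of the convolution (``fusion'') algebra of $\check{G}$-equivariant Borel--Moore homologies of the $\PGL_2^{\times 3}\pw{t}$-orbit closures in $X\ls{t}$, and the induced $\check{G}$-equivariant moment map $\mu\colon \widetilde{\mathcal{M}}^\vee_X \to \check{\g}^{\ast}$ (suitably sheared) encodes the spherical Hecke action through geometric Satake. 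Thus it suffices to prove two things: (i) an isomorphism of graded $\SL_2^{\times 3}$-schemes $\widetilde{\mathcal{M}}^\vee_X \cong \std^{\otimes 3}(4,\vec 2,-2,\vec 0)$ intertwining the $\SL_2(-2\rho)^{\times 3}$-structures, and (ii) an identification of $\mu$ with Bhargava's map $\std^{\otimes 3}\to \sl_2^{\ast,\times 3}$.

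For (i): $X$ is a spherical variety of rank $3$ with trivial generic Borel-stabiliser, its dual group is all of $\check{G} = \SL_2^{\times 3}$, and the $\PGL_2^{\times 3}\pw{t}$-orbit closures in $X\ls{t}$ are indexed by a monoid of dominant coweights (equivalently, by the dominant weights of $\SL_2^{\times 3}$). Under optimality these orbit closures degenerate, up to controlled twists, to products of affine Grassmannian Schubert varieties for the three $\PGL_2$-factors glued along the diagonal $\PGL_2$, so that geometric Satake on each factor realises the underlying graded $\SL_2^{\times 3}$-module of $\widetilde{\mathcal{M}}^\vee_X$ as $\Sym^\bullet(\mathbf 2\otimes\mathbf 2\otimes\mathbf 2)$ — this is a comparison of $\SL_2^{\times 3}$-characters against the Pieri-type decomposition of the symmetric powers of the cube representation, the rich multiplicities being carried by the equivariant homologies of the (products of) Schubert varieties. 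The weights $(4,\vec 2,-2,\vec 0)$ and the twisted group $\SL_2(-2\rho)^{\times 3}$ then come out of the arithmetic-shearing recipe of \cite[Section 6.7]{bzsv} applied to the spherical roots and the $2\rho$-shift; one checks en route that the gradings are even (so that $\sh^{1/2}$ is monoidal) and that the $\SL_2^{\times 3}$-invariant symplectic form on $(\AA^2)^{\otimes 3}$ has uniform weight $-2$, as the symplectic pairs $a\!\leftrightarrow\! c$ and $b_i\!\leftrightarrow\! d_i$ require. That $\widetilde{\mathcal{M}}^\vee_X$ carries precisely the commutative (and Poisson) structure of $\Sym^\bullet(\std^{\otimes 3})$, rather than that of some other $\SL_2^{\times 3}$-algebra with the same underlying module, will be forced by the classical fact (Gelfand--Kapranov--Zelevinsky) that the ring of $\SL_2^{\times 3}$-invariants of $(\AA^2)^{\otimes 3}$ is free on the single degree-$4$ generator given by Cayley's hyperdeterminant, matching the quadratic Casimir relation on each of the three Hecke factors.

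For (ii): the module structure over the spherical Hecke category of $\PGL_2^{\times 3}$ is classified by $\mu$, and $\mu$ is determined by its values on the three minuscule generators, one per $\PGL_2$-factor. On the $i$-th factor this value is governed by the rank-one relative Satake computation underlying the derived geometric Satake equivalence of \cite{bf-derived-satake} for $\PGL_2$; it produces, out of a cube, the discriminant quadratic form of the pencil of $2\times 2$ matrices obtained by slicing the cube in the $i$-th direction — that is, the $i$-th of Bhargava's three quadratic forms \cite{bhargava-composition-i, wood-thesis}. Assembling the three factors gives $\mu = (Q_1,Q_2,Q_3)$. The compatibility of the \emph{product} in the fusion algebra (convolution of spherical Hecke operators) with the $\SL_2^{\times 3}$-algebra structure on $\Sym^\bullet(\std^{\otimes 3})$ is then exactly Bhargava's cube law, i.e. Gauss composition on binary quadratic forms; in particular the three discriminants $\mathrm{disc}(Q_i)$ all equal Cayley's hyperdeterminant, which is the consistency check that $\mu$ is a moment map for the correct symplectic structure.

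The main obstacle is step (i), and within it the control of the $\PGL_2^{\times 3}\pw{t}$-orbit closures in $X\ls{t}$: one must use the optimality hypothesis to guarantee that these degenerate to products of (twisted) affine Schubert varieties glued along the diagonal $\PGL_2$, so that the fusion algebra genuinely reduces to the three rank-one computations rather than acquiring exotic structure constants. The input that rigidifies the commutative structure — that the invariant ring is $\mathbb{C}[\Delta]$ with $\Delta$ the hyperdeterminant, and that $\Delta$ is compatible with Gauss composition in Bhargava's sense — is the only non-formal algebraic ingredient; the remaining matching of weights under arithmetic shearing, the evenness needed for $\sh^{1/2}$, and the $-2\rho$ bookkeeping are routine once (i) and (ii) are in hand.
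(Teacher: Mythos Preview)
Your proposal diverges substantially from the paper's proof, and the divergence is at exactly the point you flag as the ``main obstacle.'' The paper does not attempt to compute the fusion algebra $\widetilde{\mathcal{M}}^\vee_X$ directly, nor does it match characters or argue rigidity from the invariant ring. Instead it invokes the specific criterion of \cite[Theorem~3.6.4]{ku-rel-langlands}, which reduces the identification to two concrete checks involving a \emph{Kostant slice} $\kappa\colon \sl_2^\ast\mmod\SL_2 \to \std^{\otimes 3}$ (sending $a^2\mapsto(a^2,\vec 0,0,\vec 1)$): condition (a), that the $\SL_2^{\times 3}$-orbit of $\kappa$ has complement of codimension $\geq 2$ in $\std^{\otimes 3}$ (in fact codimension $3$, by the Gelfand--Kapranov--Zelevinsky orbit classification), so that Hartogs gives $\co_{\std^{\otimes 3}}$ as functions on the open orbit; and condition (b), that the stabilizer group scheme of $\kappa$ is $\ker(\ld J^{\times_{\AA^1}3}\xrightarrow{\mathrm{prod}}\ld J)$, matched against $\spec\H^{\PGL_2}_\ast(\Omega(\PGL_2^{\times 3}/\PGL_2^{\mathrm{diag}});\ringcoeff)$ via the fiber sequence $\PGL_2\to\PGL_2^{\times 3}\to X$ and the known isomorphism $\spec\H^{\PGL_2}_\ast(\Omega\PGL_2;\ringcoeff)\cong\ld J$.

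Your step (i) has a genuine gap: the claim that the commutative (let alone Poisson) structure on $\widetilde{\mathcal{M}}^\vee_X$ is ``forced'' by the invariant ring being $\mathbf{C}[\Delta]$ does not hold. Many non-isomorphic $\SL_2^{\times 3}$-algebras share the underlying module $\Sym^\bullet(\std^{\otimes 3})$ and the invariant ring $\mathbf{C}[\Delta]$; the hyperdeterminant only pins down a single line in the algebra, not the full multiplication. Likewise, the assertion that orbit closures in $X\ls{t}$ ``degenerate to products of affine Schubert varieties glued along the diagonal'' is neither proved nor obviously a consequence of optimality, and in any case is not how the paper gains traction. The paper's route --- Kostant slice, codimension bound, explicit stabilizer computation --- sidesteps all of this by working entirely on the spectral side and comparing with a loop-space homology that is computable by elementary means.
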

\begin{remark}
    The hypothesis of optimality in \cref{thm: main} is equivalent to the hypothesis that the diagonal action of $\PGL_2\pw{t}$ on $\Gr_{\PGL_2} \times \Gr_{\PGL_2}$ is optimal, where $\Gr_{\PGL_2}$ is the affine Grassmannian of $\PGL_2$. In particular, \cite[Hypothesis 3.5.2(a) and (b)]{ku-rel-langlands} can be deduced from this observation. Checking purity of the $\IC$-sheaves seems a bit trickier, but should not be difficult.
\end{remark}
\begin{remark}
    Let $\PSO_{2n} := \SO_{2n}/\mu_2$. Then, the embedding $\PGL_2^\mathrm{diag} \subseteq \PGL_2^{\times 3}$ can be identified with the diagonal embedding $\SO_3 \subseteq \SO_3 \times \PSO_4$; and similarly, the action of $\SL_2^{\times 3}$ on $\std^{\otimes 3}$ can be identified with the action of $\Spin_4 \times \Sp_2$ on the tensor product of their respective defining representations. From this perspective, \cref{thm: main} could be viewed as a special case of the geometrized analogue of the Gan-Gross-Prasad period (or at least a period isogenous to it). See \cref{rmk: ggp generalization} (as well as \cite[Example 3.6.24]{ku-rel-langlands}) for a discussion of the Gan-Gross-Prasad period along the lines of this article.
\end{remark}
See \cref{rmk: so8 minimal nilpotent} for an analogue of \cref{thm: main} for the non-spherical variety $\PGL_2^{\times 4}/\PGL_2^\mathrm{diag}$.

An argument similar to that of \cref{thm: main} shows a variant for $\PSO_8$. (The arithmetic version of the result stated below was studied, for instance, in \cite{jiang-g2-periods, jiang-triple-product}.) Namely, there is an embedding $\G_2 \subseteq \PSO_8$ given by triality, which exhibits $\G_2$ as a spherical subgroup of $\PSO_8$. To see that this situation is analogous to that of \cref{thm: main}, note that the Dynkin diagram $\bull$ of $A_1$ is obtained from the Dynkin diagram $\bull \bull \bull$ of $A_1^{\times 3}$ by folding with respect to the obvious action of the symmetric group $\Sigma_3$. In the same way, the Dynkin diagram \dynkin{G}{2} of $\G_2$ is obtained from the Dynkin diagram \dynkin{D}{4} of $D_4$ by folding with respect to the action of $\Sigma_3$ permuting the three vertices around the branching vertex. The following result concerns \cite[Conjecture 7.5.1]{bzsv} for the Hamiltonian $\PSO_8$-variety $T^\ast(\PSO_8/\G_2)$ and the Hamiltonian $\Spin_8$-variety $\Ind_{\SL_2^{\times 3}}^{\Spin_8}(\AA^1\oplus \std^{\otimes 3})$.
\begin{theorem}[Derived geometric Satake for $\PSO_8/\G_2$]\label{thm: G2 main}
    Suppose that the $\PSO_8\pw{t}$-action on $\PSO_8\ls{t}/\G_2\ls{t}$ is optimal in the sense of \cite[Hypothesis 3.5.2]{ku-rel-langlands}. 
    Then there is an equivalence
    $$\Shv_{\PSO_8\pw{t}}^{c,\Sat}(\PSO_8\ls{t}/\G_2\ls{t}; \ringcoeff) \simeq \Perf^\sh(\std^{\otimes 3}(12,\vec{6},-6,\vec{0})/\SL_2(-6\rho)^{\times 3} \times \AA^1(4)).$$
\end{theorem}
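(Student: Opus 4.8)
The strategy is to reduce Theorem~\ref{thm: G2 main} to Theorem~\ref{thm: main} by exploiting the folding $D_4 \rightsquigarrow \G_2$ and its relation to the folding $A_1^{\times 3} \rightsquigarrow A_1$, together with the fact that both periods are built from the same local datum, namely a point of $\std^{\otimes 3}$. More precisely, on the automorphic side, the spherical variety $\PSO_8/\G_2$ should be analyzed via the same mechanism as $\PGL_2^{\times 3}/\PGL_2^\mathrm{diag}$: I would first record that the embedding $\G_2 \subseteq \PSO_8$, being obtained by triality-folding from $D_4$, has a Levi-type description compatible with the inclusion $\SO_3 \times \PSO_4 \supseteq \SO_3^\mathrm{diag}$ appearing in the second Remark after Theorem~\ref{thm: main}; this identifies the relevant $\Sat$-sheaf category as a $\Sigma_3$-equivariantization (or a suitable limit) of the category appearing in Theorem~\ref{thm: main}. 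On the spectral side, the Hamiltonian $\Spin_8$-variety is by construction induced, $\Ind_{\SL_2^{\times 3}}^{\Spin_8}(\AA^1 \oplus \std^{\otimes 3})$, so that by the induction-compatibility of the BZSV duality (as in \cite[\S3.6]{ku-rel-langlands}) its spectral category is governed by the $\SL_2^{\times 3}$-variety $\AA^1 \oplus \std^{\otimes 3}$ — which is exactly the variety from Theorem~\ref{thm: main} with an extra affine line adjoined.

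The key steps, in order, are: \emph{(i)} verify that optimality of the $\PSO_8\pw{t}$-action on $\PSO_8\ls{t}/\G_2\ls{t}$ implies (or is implied by, in the needed direction) the hypotheses of \cite[Hypothesis 3.5.2]{ku-rel-langlands}, paralleling the Remark after Theorem~\ref{thm: main}; \emph{(ii)} identify the sheaf-theoretic side $\Shv_{\PSO_8\pw{t}}^{c,\Sat}(\PSO_8\ls{t}/\G_2\ls{t}; \ringcoeff)$ by expressing the affine Grassmannian situation for $(\PSO_8, \G_2)$ in terms of that for $(\SO_3 \times \PSO_4, \SO_3^\mathrm{diag})$ via triality, reducing to the analysis already carried out for Theorem~\ref{thm: main}; \emph{(iii)} on the dual side, apply the induction formula to rewrite $\Perf^\sh$ of the $\Spin_8$-quotient of $\Ind_{\SL_2^{\times 3}}^{\Spin_8}(\AA^1 \oplus \std^{\otimes 3})$ as $\Perf^\sh$ of the $\SL_2(-6\rho)^{\times 3}$-quotient of $\std^{\otimes 3} \times \AA^1$ with the grading dictated by arithmetic shearing; \emph{(iv)} compute the weights: the distinguished $\SL_2$ inside $\Spin_8$ for the $\G_2$-period is the one whose $\rho$-cocharacter acts on the three copies of $\std$ with the "tripled" normalization, which forces the grading shift from $(4,\vec2,-2,\vec0)$ to $(12,\vec6,-6,\vec0)$ and produces the $\AA^1(4)$ factor from the fixed invariant of degree two in the cube entries (the hyperdeterminant direction, or rather the $\AA^1$-summand paired with it); \emph{(v)} match the Hecke-equivariance using Bhargava's map $\std^{\otimes 3} \to \sl_2^{\ast,\times 3}$ as in Theorem~\ref{thm: main}, now viewed through the folded $\g_2$-moment map.

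\textbf{Main obstacle.} The hard part will be step~\emph{(ii)} together with the weight bookkeeping in step~\emph{(iv)}: one must pin down precisely how triality folding interacts with the loop-group orbit stratification and the $\IC$-sheaves on $\PSO_8\ls{t}/\G_2\ls{t}$ — in particular, checking that the folding is compatible with the relevant $\Sigma_3$-action at the level of the Satake category and not merely at the level of root data — and then, on the spectral side, correctly normalizing the distinguished $\sl_2$-triple and the induced grading so that the $\AA^1(4)$ appears with the stated weight rather than an off-by-a-factor variant. The induction step~\emph{(iii)} is expected to be formal given \cite[\S3.6]{ku-rel-langlands}, and step~\emph{(i)} should go exactly as in the $\PGL_2^{\times 3}$ case; but reconciling the two foldings at the categorical level, rather than just combinatorially on Dynkin diagrams, is where genuine work is needed, and is presumably why the statement is phrased as "an argument similar to that of Theorem~\ref{thm: main}".
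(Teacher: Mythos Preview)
Your proposal takes a genuinely different route from the paper, and the obstacle you flag as ``main'' is real: reconciling triality-folding with the loop-group stratification at the level of constructible-sheaf categories is not available off the shelf, and the paper does not attempt it. There is no categorical $\Sigma_3$-equivariantization step relating $\Shv^{c,\Sat}_{\PSO_8\pw{t}}(\PSO_8\ls{t}/\G_2\ls{t})$ to $\Shv^{c,\Sat}_{\PGL_2^{\times 3}\pw{t}}(\PGL_2^{\times 3}\ls{t}/\PGL_2^\mathrm{diag}\ls{t})$ in the argument.

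What the paper actually does is rerun the \emph{same} criterion (\cite[Theorem~3.6.4]{ku-rel-langlands}) directly for the pair $(\PSO_8,\G_2)$, with no reduction to the $\PGL_2^{\times 3}$ case on the automorphic side. Condition~(a) is the same invariant-theoretic input about $\std^{\otimes 3}$ already established (\cref{prop: G-orbit kostant codim}, \cref{prop: stab of kostant}), just with a rescaled grading. Condition~(b) requires computing $\spec \H^{\G_2}_\ast(\Omega(\PSO_8/\G_2);\ringcoeff)$, and this is where the only new idea lives: the paper proves the homotopy equivalence
\[
\PSO_8/\G_2 \simeq \RP^7 \times \RP^7
\]
(\cref{lem: pso8 mod g2}, via the description of $\Spin_8$ as triples in $\SO_8^{\times 3}$ satisfying the triality relation), and then invokes the computation
\[
\spec \H^{\G_2}_\ast(\Omega \RP^7;\ringcoeff) \cong \spec \ringcoeff[a,b,\alpha^{\pm 1},\tfrac{\alpha-\alpha^{-1}}{a}]^{\Z/2}
\]
from \cite[Proposition~4.8.6]{ku-rel-langlands}, with $a$ in weight $-6$ and $b$ in weight $-4$. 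The shift of $a$ from weight $-2$ to $-6$ is what forces $(4,\vec 2,-2,\vec 0)\mapsto(12,\vec 6,-6,\vec 0)$ and $\SL_2(-2\rho)\mapsto\SL_2(-6\rho)$; the extra polynomial variable $b$ is exactly the $\AA^1(4)$ factor. No induction formula or weight-bookkeeping via a distinguished $\sl_2$-triple in $\Spin_8$ is needed.

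In short: your steps~(i) and~(iii) are not wrong in spirit, but step~(ii) is both unnecessary and the hardest to make rigorous. The paper's approach replaces it entirely with a single topological lemma about the homotopy type of $\PSO_8/\G_2$, after which the proof of \cref{thm: main} goes through verbatim.
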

This implies, for instance, that the spherical subgroups $\PGL_2^{\times 2} \subseteq \PGL_2^{\times 4}$ (given by $(g,h)\mapsto (g,g,g,h)$) and $\G_2 \subseteq \PSO_8$ have the same dual quotient stacks (namely, $\std^{\otimes 3}/\SL_2^{\times 3} \times \AA^1$) up to grading. Therefore, they fit into the paradigm of \cite[Remark 4.1.5]{ku-rel-langlands}.

The proofs of \cref{thm: main} and \cref{thm: G2 main} reduce to showing that the conditions of \cite[Theorem 3.6.4]{ku-rel-langlands} are met.
This ultimately relies on studying Bhargava's construction from \cite{bhargava-composition-i} relating $2\times 2 \times 2$-matrices to quadratic forms, and the work \cite{gelfand-hyperdet} of Gelfand-Kapranov-Zelevinsky describing the relationship to Cayley's hyperdeterminant. The work presented in this article indicates that there is much more to explore regarding the relationship between (relative) geometric Langlands (following \cite{bzsv}), prehomogeneous vector spaces (following \cite{sato-kimura-prehomogeneous}), and the progress in arithmetic invariant theory (using the terminology of \cite{bhargava-gross-arithmetic-invariant}) over the past 20 years spurred by Bhargava's thesis. To illustrate this, here are two concrete questions in the interface of these subjects:
\begin{itemize}
    \item Can one use Bhargava's work \cite{bhargava-composition-i} on the $\SL_6(\Z)$-action on $\wedge^3 \Z^6$ to approach the geometrized version of \cite{ginzburg-rallis-exterior-cube} (i.e., \cite[Line 16 of Table 1.5.1]{bzsv})? (Here, the information flows from arithmetic invariant theory to relative geometric Langlands.)
    \item Following the work of Moore and Tachikawa in \cite{moore-tachikawa}, let $\ol{\co} := \ol{\co_\min}(\fr{so}_8)$ denote the minimal nilpotent coadjoint orbit of $\fr{so}_8$, so that it is $10$-dimensional. It has a canonical action of $\SO_8$, and hence a canonical action of $\SL_2^{\times 4} \subseteq \SO_8$. In fact, more is true: there is an action of the symmetric group $\Sigma_4$ on $\fr{so}_8$ extending the $\Sigma_3\subseteq \Sigma_4$-action by triality, whose fixed subgroup is $\sl_3 \subseteq \g_2$. (See \cite[Section 4.1]{S4-action-SO8} for a description of this action.) This defines an action of $\Sigma_4$ on $\ol{\co}$, and in fact the action of $\SL_2^{\times 4}$ on $\ol{\co}$ is $\Sigma_4$-equivariant for the permutation action on $\SL_2^{\times 4}$. 
    
    The action $\SL_2^{\times 4} \act \ol{\co}$ should be viewed as the analogue of $\SL_2^{\times 3} \act (\AA^2)^{\otimes 3}$; see also \cref{rmk: so8 minimal nilpotent}.\footnote{Here is another (related) reason. As described in \cite[Remark 3.6.25]{ku-rel-langlands}, both of these examples can be placed in the following context. The groups $(\SL_2,\SL_2^{\times 2})$ form a reductive dual pair in $\Sp_8$, and $(\AA^2)^{\otimes 3}$ is a double cover of the minimal nilpotent orbit closure of $\Sp_8$. Similarly, the groups $(\SL_2,\SL_2^{\times 3})$ form a reductive dual pair in $\SO_8$, and $\ol{\co}$ is the minimal nilpotent orbit closure of $\SO_8$. One should therefore view these examples as falling under the purview of Howe duality.} There is a map $\ol{\co} \to \AA^1$, which we will call the ``discriminant'', which defines an isomorphism
    $$\ol{\co}\mmod \SL_2^{\times 4} \xar{\sim} \AA^1,$$
    at least over $\cc$. The above isomorphism should also hold true over $\Z$.
    
    Following \cite{bhargava-composition-i}, let us introduce some terminology. An ordered tuple $(I_1, I_2, I_3, I_4)$ of oriented ideals of a quadratic ring $S$ such that $I_1 I_2 I_3 I_4 \subseteq S$ and $N(I_1) N(I_2) N(I_3) N(I_4) = 1$ will be called \textit{balanced}, and say that two such balanced tuples $(I_1, I_2, I_3, I_4)$ and $(I_1', I_2', I_3', I_4')$ are equivalent if there are elements $x_1, \cdots, x_4\in S \otimes \QQ$ such that $I_j' = x_j I_j$.
    Say that an $\SL_2^{\times 4}(\Z)$-orbit on $\ol{\co}(\Z)$ is \textit{nondegenerate} if its discriminant is nonzero.
    Then, one should have the following analogue of \cite[Theorem 11]{bhargava-composition-i}: 
    \begin{conjecture}
        There is a canonical bijection between the set of nondegenerate $\SL_2^{\times 4}(\Z)$-orbits on $\ol{\co}(\Z)$ and the set of isomorphism classes of pairs $(S,(I_1, I_2, I_3, I_4))$ of a nondegenerate oriented quadratic ring $S$ and an equivalence class $(I_1, I_2, I_3, I_4)$ of balanced oriented ideals of $S$.
    \end{conjecture}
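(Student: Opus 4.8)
The plan is to prove this in the style of Bhargava's proof of the cube correspondence \cite[Theorem 11]{bhargava-composition-i}, using the geometry of $\ol{\co}$ recorded above --- in particular the discriminant map $\ol{\co}\mmod\SL_2^{\times 4}\xar{\sim}\AA^1$ --- together with Wood's functorial reformulation of higher composition laws over a general base \cite{wood-thesis}. In place of the three ``slicings'' of a $2\times 2\times 2$ cube one uses the four components of the moment map $\mu\colon\ol{\co}\to\sl_2^{\ast,\times 4}$ for the Hamiltonian $\SL_2^{\times 4}$-action. Concretely: (i) build the forward map sending a nondegenerate orbit to a pair $(S,(I_1,\dots,I_4))$; (ii) build an inverse; (iii) check the two are mutually inverse, tracking orientations throughout.

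For step (i): given $x\in\ol{\co}(\Z)$, the components $\mu_j(x)\in\sl_2^\ast$ are, under the $\SL_2$-equivariant identification $\sl_2^\ast\cong\Sym^2(\std^\ast)$, four integral binary quadratic forms $Q_1,\dots,Q_4$. The isomorphism $\ol{\co}\mmod\SL_2^{\times 4}\xar{\sim}\AA^1$ should refine, under this identification, to the assertion that $Q_1,\dots,Q_4$ share a common discriminant $D$, namely the image of $x$ under the discriminant map; nondegeneracy of the orbit is precisely $D\ne 0$, so $S:=S_D$ is a nondegenerate oriented quadratic ring. The classical dictionary between oriented integral binary quadratic forms of discriminant $D$ and oriented ideals of $S_D$ (as in \cite[\S 2]{bhargava-composition-i} and \cite{wood-thesis}) then attaches to each $Q_j$ an oriented ideal $I_j$, well-defined up to the equivalence $I_j\sim x_jI_j$. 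What remains --- and this is the heart of the matter --- is to show $(I_1,\dots,I_4)$ is \emph{balanced}: $N(I_1)N(I_2)N(I_3)N(I_4)=1$ and $I_1I_2I_3I_4\subseteq S$. The norm identity should follow by tracking leading coefficients through $\mu$ (equivalently, from the image of $\mu$ landing in a norm-$1$ locus). For the inclusion one must extract the multiplicative bilinear identities among the coordinates of $x$; here one should use the description of $\ol{\co}$ inside $\fr{so}_8\cong\wedge^2\AA^8$ as the affine cone of isotropic $2$-planes (square-zero skew matrices of rank $\le 2$), restricted along the map $\SL_2^{\times 4}\to\SO_8$ coming from $\SL_2^{\times 2}\cong\Spin_4$, to present $x$ as a system of structure constants for a ``quadruple composition'' --- morally, two cube laws glued along the decomposition $\SL_2^{\times 4}\supseteq\SL_2^{\times 2}\times\SL_2^{\times 2}$ --- whereupon associativity of multiplication in $S$ forces $I_1I_2I_3I_4\subseteq S$.

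For steps (ii) and (iii): given a balanced $(S,(I_1,\dots,I_4))$, choose oriented $\Z$-bases of $S$ and of the $I_j$ compatibly with the norm-$1$ condition; the multiplication $I_1\otimes I_2\otimes I_3\otimes I_4\to S$ then has integral structure constants that assemble into a point of $\ol{\co}(\Z)$ --- the condition $N(I_1)\cdots N(I_4)=1$ being exactly what places it in the minimal orbit closure rather than a nilpotent locus of larger rank --- well-defined up to the $\SL_2^{\times 4}(\Z)$ arising from the remaining freedom in the oriented bases. One checks that over $\cc$ this recovers the unique $\SL_2^{\times 4}$-orbit in each fiber of the discriminant, with stabilizer the norm-$1$ subtorus $\{(t_1,\dots,t_4):\prod t_i=1\}$ (a dimension count $12-3=9=\dim\ol{\co}-1$ confirms this is the right shape), and that the two constructions are mutually inverse on $\Z$-points, with the sign bookkeeping matching the $\mu_2$'s distinguishing $\Spin_8$, $\SO_8$, and $\PSO_8$ --- in parallel with $(\AA^2)^{\otimes 3}$ being a double cover of $\ol{\co_\min}(\fr{sp}_8)$.

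The main obstacle is the \emph{integral} content of the balanced condition and its converse: deriving $I_1I_2I_3I_4\subseteq S$ from integrality of a point of the singular variety $\ol{\co}$, and conversely showing that an arbitrary balanced quadruple --- with $S$ possibly non-maximal and the $I_j$ possibly non-invertible --- produces a genuinely integral point lying in $\ol{\co}$, and in the minimal orbit closure rather than a larger one. Over $\cc$ the statement is essentially bookkeeping around $\ol{\co}\mmod\SL_2^{\times 4}\cong\AA^1$ and the identification of generic fibers as torsors under the norm-$1$ torus; but, exactly as in \cite{bhargava-composition-i}, the difficulty over $\Z$ is that non-maximal quadratic rings and non-invertible ideals must be handled directly, and the failure of smoothness of $\ol{\co}$ along its sub-orbit must be controlled. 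A secondary point to pin down is that the group genuinely acting is $\SL_2^{\times 4}(\Z)$ rather than a $\mu_2$-cover or quotient of it; this amounts to identifying the correct isogeny form of $\SL_2^{\times 4}\to\SO_8$ acting on $\ol{\co}$ and checking that the associated $\mu_2$-gerbe class is inert, which is expected but should be verified.
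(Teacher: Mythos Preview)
The paper does not prove this statement: it is explicitly labeled a \textbf{Conjecture} and introduced with the phrase ``one should have the following analogue of \cite[Theorem 11]{bhargava-composition-i}'', followed immediately by ``If this were true\ldots''. There is therefore no proof in the paper to compare your proposal against.

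That said, your outline is a sensible plan of attack and tracks the structure of Bhargava's argument closely. You correctly identify the moment map $\mu:\ol{\co}\to\sl_2^{\ast,\times 4}$ as the analogue of the three slicings of a cube, and the common-discriminant property as following from the invariant quotient $\ol{\co}\mmod\SL_2^{\times 4}\cong\AA^1$. You are also right that the substantive work lies in the integral statements: deducing $I_1I_2I_3I_4\subseteq S$ from the defining equations of $\ol{\co}$, and conversely producing an honest $\Z$-point of the singular variety $\ol{\co}$ from a balanced quadruple with possibly non-invertible ideals. These are exactly the steps that your proposal leaves as obstacles rather than resolves, so what you have written is a strategy, not a proof --- which is consistent with the paper's own stance that the statement is open. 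One small correction: your dimension count reads $12-3=9=\dim\ol{\co}-1$, but $\dim\SL_2^{\times 4}=12$ and the stabilizer torus has dimension $3$, giving an orbit of dimension $9$; since $\dim\ol{\co}=10$, this is indeed $\dim\ol{\co}-1$, so the count is fine, but you should be explicit that the ``$-1$'' is accounted for by the one-parameter family over the discriminant line.
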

    If this were true, one could symmetrize, i.e., take $I_1 = I_2 = I_3 = I_4$, so that $I := I_1$ will be a $4$-torsion element in the class group of $S$. It would be very interesting if one could use the explicit description of the invariant quotient $\ol{\co}\mmod \Sigma_4$ in \cite[Theorem 4.6 and Theorem 4.12]{S4-action-SO8} to describe the asymptotics of $4$-torsion in class groups of quadratic rings over $\Z$. (Here, the information flows from relative geometric Langlands to arithmetic invariant theory.)
\end{itemize}
In \cref{rmk: SOn more general} and \cref{rmk: pairs of binary cubics}, I have included some further natural observations/questions whose role in the picture of geometric Langlands is unclear to me, in the hopes that it will stimulate further investigation into these questions.

\begin{remark}
    The arguments of this article should continue to hold if one considers sheaves with coefficients in $\Z[\frac{1}{2}]$; we have not checked this explicitly, but it seems likely to be true. In fact, we expect that the results of this article should continue to hold for sheaves with coefficients in $\Z$ itself. This, however, is a rather more subtle question: the prime $2$ is an interesting one (see \cref{rmk: prime 2 cayley}).

    More generally, following the philosophy of \cite{ku-rel-langlands}, it should also be possible to use a variant of the methods of this article to prove analogues of \cref{thm: main} and \cref{thm: G2 main} for sheaves with coefficients in connective complex K-theory $\ku$. We have not attempted to do this, but we expect the corresponding $1$-parameter deformation of $\std^{\otimes 3}$ over $\pi_\ast(\ku) \cong \Z[\beta]$ to be a rather interesting $\ku$-Hamiltonian $\SL_2^{\times 3}$-variety.
\end{remark}
\begin{remark}
    The equivalence of \cref{thm: main} can heuristically be viewed as geometric Langlands for $\PGL_2$ on the ``doubled raviolo'', obtained by gluing three formal disks along their common punctured disk.
    I expect \cref{thm: main} to be related to the work of \cite{moore-tachikawa}, and hope to address this relationship and the context of \cref{thm: main} in physics in joint work with Ben-Zvi and Gunningham.

    \cref{thm: main} is also related to the monoidality of the derived Satake equivalence.
    The $\infty$-category $\Shv_{\PGL_2^{\times 3}\pw{t}}^{c,\Sat}(\PGL_2^{\times 3}\ls{t}/\PGL_2^\mathrm{diag}\ls{t}); \ringcoeff)$ can be identified with $\Shv_{\PGL_2\pw{t}}^{c,\Sat}(\Gr_{\PGL_2} \times \Gr_{\PGL_2}; \ringcoeff)$, where $\Gr_{\PGL_2}$ is the affine Grassmannian of $\PGL_2$. In particular, it can be used to describe the \textit{ordinary} tensor product on $\Shv_{\PGL_2\pw{t}}^{c,\Sat}(\Gr_{\PGL_2}; \ringcoeff)$. \cref{thm: main} says that under the derived geometric Satake equivalence of \cite{bf-derived-satake} identifying this $\infty$-category with $\Perf^\sh(\Sym^2(\std)/\SL_2)$, the ordinary tensor product on $\Shv_{\PGL_2\pw{t}}^{c,\Sat}(\Gr_{\PGL_2}; \ringcoeff)$ can be identified with Gauss composition of binary quadratic forms (in the way described by Bhargava), i.e., via the correspondence \cref{eq: std tensor 3 span}.\footnote{This is similar to how the \textit{convolution} tensor product on $\Shv_{\PGL_2\pw{t}}^{c,\Sat}(\Gr_{\PGL_2}; \ringcoeff)$ can be identified with the usual tensor product on $\Perf^\sh(\Sym^2(\std)/\SL_2)$.}
    (In fact, I learned after writing this article that the main calculation of this article also appears in \cite[Section 5(iii)]{bfn-ring-objects}. The approach taken here is somewhat different, in that the calculations are more explicit and fit into the story of \cite{ku-rel-langlands}; I hope that the reader might find this alternative perspective useful.)

    In the final section of this article, we suggest some variants of \cref{thm: main} with $\PGL_2^{\times 3}$ replaced by variants. Namely, we expect:
    \begin{itemize}
        \item Let $G = \Res_{\cc\pw{t^{1/3}}/\cc\pw{t}} \PGL_2$, where $\PGL_2$ is viewed as a constant group scheme over $\cc\pw{t^{1/3}}$. Then there should be a fully faithful functor from the $\infty$-category of perfect complexes on a shearing of the quotient stack $\Sym^3(\std)/\SL_2$ to $\Shv_{G\pw{t}}^{c,\Sat}(G\ls{t}/\PGL_2\ls{t}; \ringcoeff)$. The grading with respect to which the shearing is taken is described in \cref{conj: langlands binary cubics}.
        \item Let $G = \Res_{(\cc\pw{t^{1/2}} \times \cc\pw{t})/\cc\pw{t}} \PGL_2$. Then there should be a fully faithful functor from the $\infty$-category of perfect complexes on a shearing of the quotient stack $(\std \otimes \sl_2^\ast)/\SL_2^{\times 2}$ to  $\Shv_{G\pw{t}}^{c,\Sat}(G\ls{t}/\PGL_2\ls{t}; \ringcoeff)$. The grading with respect to which the shearing is taken is described in \cref{conj: langlands pairs of binary quadratic}.
    \end{itemize}
\end{remark}
\begin{remark}\label{rmk: intro-quantum}
    The quotient stack $\std^{\otimes 3}/\SL_2^{\times 3}$ is also studied (in different language, of course) in quantum information theory; see \cref{rmk: quantum information} below.
\end{remark}
\cref{thm: main} and \cref{thm: G2 main} are predicted by (the Betti version of) the local geometric conjecture of Ben-Zvi--Sakellaridis--Venkatesh; see \cite[Conjecture 7.5.1]{bzsv}. My homotopy-theoretic interpretation of their conjecture is as follows. Suppose $G$ is a reductive group over $\cc$ and $G/H$ is an affine homogeneous spherical $G$-variety (meaning that it admits an open $B$-orbit for its natural left $B\subseteq G$-action). Then, there should be a dual graded $\ld{G}$-variety $\ld{M}$ equipped with a moment map $\mu: \ld{M} \to \ld{\g}^\ast$, and an equivalence of the form
$$\Shv_{G\pw{t}}^{c,\Sat}(G\ls{t}/H\ls{t}; \cc) \simeq \Perf^\sh(\ld{M}/\ld{G}),$$
where $\Perf^\sh$ denotes the $\infty$-category of perfect complexes on the shearing of $\ld{M}/\ld{G}$ with respect to its given grading. In fact, \cite[Section 4]{bzsv} gives an explicit construction of this predicted dual $\ld{M}$, and in the examples $(G, H) = (\PGL_2^{\times 3}, \PGL_2^\mathrm{diag})$ and $(\PSO_8, \G_2)$, one can compute that the stacky quotient $\ld{M}/\ld{G}$ is isomorphic to the right-hand sides of \cref{thm: main} and \cref{thm: G2 main} respectively.\footnote{In the first case, this computation is straightforward given the prescription of \cite[Section 4]{bzsv}; see \cite[Example 7.2.4]{sakellaridis-spherical-functions} for a reference. The computation in the second case goes as follows. As in \cite[Remark 7.1.1]{bzsv}, the quotient stack $\ld{M}/\ld{G}$ can be identified with the quotient $\ld{V}_X/\ld{G}_X$, where $\ld{G}_X$ is the Gaitsgory-Nadler/Sakellaridis-Venkatesh/Knop-Schalke dual group of $X$ and $\ld{V}_X$ is constructed in \cite[Section 4.5]{bzsv}. In the case $X = \PSO_8/\G_2$, a calculation shows that $\ld{G}_X$ is the Levi subgroup of the maximal parabolic subgroup of $\PSO_8$ corresponding to the central vertex of the $D_4$ Dynkin diagram; so $\ld{G}_X \cong \SL_2^{\times 3}$. Using the prescription of \cite[Section 4.5]{bzsv}, one can check that $\ld{V}_X \cong \std^{\otimes 3} \oplus \AA^1$, where $\ld{G}_X$ acts only on the first factor. See, e.g., \cite[Line 9 of Table in Appendix A]{sakellaridis-spherical-functions}.}

Lest \cref{thm: main} seem like an oddly specific example to focus on, we note that it is essentially the \textit{only} ``new'' example of a spherical pair $(G, H)$ of the form $(H^{\times j}, H^\mathrm{diag})$, as shown by the following lemma.
\begin{lemma}\label{lem: when is diagonal spherical}
    Suppose $H$ is a simple linear algebraic group over $\cc$. Then the subgroup $H^\mathrm{diag} \subseteq H^{\times j}$ is spherical if and only if:
    \begin{enumerate}
        \item $j = 2$, and $H$ arbitrary;
        \item $j = 3$ and $H$ is of type $A_1$.
    \end{enumerate}
\end{lemma}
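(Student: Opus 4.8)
The plan is to reduce the whole statement to an elementary dimension count for the action of a Borel subgroup of $H^{\times j}$ on $H^{\times j}/H^{\mathrm{diag}}$, together with one explicit isotropy computation in type $A_1$. First I would compute the stabilizer in $B^{\times j} \subseteq H^{\times j}$ of a point $(h_1,\dots,h_j)H^{\mathrm{diag}}$: an element $(b_1,\dots,b_j)$ fixes it exactly when there is $g \in H$ with $b_i = h_i g h_i^{-1}$ for all $i$, and the requirement $b_i \in B$ then reads $g \in h_i^{-1}Bh_i$, so
\[
\Stab_{B^{\times j}}\big((h_1,\dots,h_j)H^{\mathrm{diag}}\big)\;\cong\;\bigcap_{i=1}^{j} h_i^{-1}Bh_i .
\]
Since $H^{\times j}/H^{\mathrm{diag}}$ is irreducible of dimension $(j-1)\dim H$, and since the dimension of a $B^{\times j}$-stabilizer is an upper semicontinuous function of the point, $B^{\times j}$ has an open orbit if and only if, for generic $(h_i)$, this intersection has dimension $j\dim B - (j-1)\dim H$; in particular a necessary condition for sphericality is $j\dim B \ge (j-1)\dim H$.

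Next I would run the numerology. Writing $r$ for the rank and $N = \lvert\Phi^+\rvert$ for the number of positive roots of $H$, we have $\dim B = r + N$ and $\dim H = r + 2N$, and the inequality $j\dim B \ge (j-1)\dim H$ rearranges to $r \ge (j-2)N$. For $j \ge 4$ this is impossible for every simple $H$, since $N \ge r \ge 1$ forces $(j-2)N \ge 2N > r$; hence $H^{\mathrm{diag}} \subseteq H^{\times j}$ is never spherical when $j \ge 4$. For $j = 3$ the condition becomes $r \ge N$, and for a simple group this holds only in type $A_1$, because the $r$ simple roots are positive and there is always a positive root that is not simple unless the root system is $A_1$. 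For $j = 2$ the condition is automatic. This settles the ``only if'' direction (the case $j = 1$ being the degenerate one, $H^{\mathrm{diag}} = H^{\times 1}$ and $H/H = \mathrm{pt}$, which is trivially spherical).

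For the ``if'' direction I would check the two cases directly. For $j = 2$: the map $(g_1,g_2)\mapsto g_1g_2^{-1}$ identifies $(H\times H)/H^{\mathrm{diag}}$ with $H$, on which $(g_1,g_2)$ acts by $x \mapsto g_1 x g_2^{-1}$, and the Bruhat decomposition exhibits the big cell $Bw_0B$ as an open $B\times B$-orbit, so the pair is spherical for every $H$. For $j = 3$ with $H$ of type $A_1$: by the formula above it suffices to show that $\bigcap_{i=1}^{3} h_i^{-1}Bh_i$ is finite for generic $(h_1,h_2,h_3)$. Under the surjection $H^{\times 3}\twoheadrightarrow (H/B)^{\times 3}$ sending $(h_i)$ to $(h_i^{-1}Bh_i)$, a generic triple of Borels of a type-$A_1$ group corresponds to three \emph{distinct} points of $\PP^1$; an element of $H$ lying in all three Borels stabilizes three distinct lines in the standard $2$-dimensional representation, hence is scalar and lies in the (finite) center. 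Therefore the generic stabilizer is finite, the generic $B^{\times 3}$-orbit has dimension $3\dim B = 6 = 2\dim H = \dim(H^{\times 3}/H^{\mathrm{diag}})$, and the pair is spherical.

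The argument has no real obstacle; the only step requiring a little care is the genericity claim in the last paragraph --- that three generic conjugates of a Borel of a type-$A_1$ group meet only in the center --- which the $\PP^1$ picture handles uniformly over the two isogeny types $\SL_2$ and $\PGL_2$. (If desired, one can even avoid the Bruhat argument for $j=2$ and instead note, by the same intersection count, that two generic Borels of any reductive $H$ meet in a maximal torus, giving generic orbit dimension $2\dim B - r = \dim H$.)
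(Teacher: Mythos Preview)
Your proof is correct and follows essentially the same approach as the paper: both reduce the ``only if'' direction to the dimension inequality $\mathrm{rank}(H) \geq (j-2)|\Phi^+|$ and then check the surviving cases by hand. You phrase things dually (a $B^{\times j}$-orbit on $H^{\times j}/H^{\mathrm{diag}}$ rather than an $H^{\mathrm{diag}}$-orbit on the flag variety of $H^{\times j}$), but this is the same count, and your explicit verification of the $j=3$, type $A_1$ case via the three-points-on-$\PP^1$ argument is more detailed than the paper's ``check by hand''.
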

\begin{proof}
    If the subgroup $H^\mathrm{diag} \subseteq H^{\times j}$ is spherical, there is an open $H^\mathrm{diag}$-orbit on the flag variety of $H^{\times j}$. This implies that the dimension of $H$ must be at least $j|\Phi^+|$, where $\Phi^+$ is the set of positive roots; equivalently, one needs $\mathrm{rank}(H) \geq (j-2)|\Phi^+|$.
    Of course, this is always satisfied if $j = 2$ (this is the group case corresponding to the symmetric subgroup $H^\mathrm{diag} \subseteq H \times H$).
    Using the classification of simple linear algebraic groups over $\cc$, it is easy to see that the only other case when the above inequality can hold is when $j = 3$ and $H$ is of type $A_1$; one can then check by hand that the diagonal subgroup in this case is indeed spherical.
\end{proof}
In the first case of \cref{lem: when is diagonal spherical}, \cite[Conjecture 7.5.1]{bzsv} is precisely the derived geometric Satake equivalence of \cite{bf-derived-satake}.
Therefore, the only other case of \cref{lem: when is diagonal spherical} is when $H$ is simple of type $A_1$, and \cref{thm: main} precisely addresses \cite[Conjecture 7.5.1]{bzsv} for the adjoint form $\PGL_2$ of $H$.

\subsection*{Acknowledgements}

I am very grateful to Yiannis Sakellaridis for his support and help in answering my numerous questions about \cite{bzsv}; to Aaron Landesman, Ashvin Swaminathan, and Akshay Venkatesh for interesting conversations; to Alison Miller for directing me (via MathOverflow) to the inspirational work of Bhargava; and to Charles Fu and Jesper Grodal for helpful suggestions.

\section{Some properties of $\std^{\otimes 3}$}

In this section, we establish some basic properties of $\std^{\otimes 3}$ as a $\SL_2^{\times 3}$-variety; our base field will always be $\ringcoeff$, and we will write $\ld{G} = \SL_2^{\times 3}$. Some of this material appears in \cite{bhargava-composition-i, wood-thesis}. In particular, \cref{def: cube and quadratic forms} is due to Bhargava.
\begin{observe}\label{obs: sl2 and binary quadratic}
    An element $A = \begin{psmallmatrix}
        a & b\\
        c & -a
    \end{psmallmatrix} \in \sl_2^\ast \cong \sl_2$ can be identified with a binary quadratic form $q_A(x,y) = cx^2 + 2axy - by^2$. The resulting identification between $\Sym^2(\std) = (\std \otimes \std)_{\Sigma_2}$ and $\sl_2$ is $\SL_2$-equivariant.
    Note, moreover, that the discriminant of $q_A(x,y)$ is $-4\det(A)$.
\end{observe}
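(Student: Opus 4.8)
The plan is to realize the isomorphism concretely and deduce equivariance and the discriminant formula from a single matrix identity, rather than appeal to abstract representation theory --- although the abstract route is available too: both $\Sym^2(\std)$ and $\sl_2$ are copies of the unique $3$-dimensional irreducible $\SL_2$-representation, so an $\SL_2$-equivariant isomorphism exists and is unique up to scaling by Schur's lemma, and the only real content is pinning down the normalization (e.g.\ by matching torus weights and the action of $\begin{psmallmatrix} 0 & 1\\ -1 & 0\end{psmallmatrix}$) and then reading off the discriminant.

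First I would set up the relevant identifications. The trace form $\langle X, Y\rangle = \Tr(XY)$ on $\sl_2$ is symmetric, nondegenerate, and $\SL_2$-invariant, so it gives an $\SL_2$-equivariant isomorphism $\sl_2^\ast \cong \sl_2$ under which the coadjoint action becomes the conjugation action $A \mapsto gAg^{-1}$. Next, the symplectic form $J = \begin{psmallmatrix} 0 & 1\\ -1 & 0\end{psmallmatrix}$ furnishes an $\SL_2$-equivariant self-duality $\std \cong \std^\ast$ (this is exactly the statement $\SL_2 = \Sp_2$), hence an $\SL_2$-equivariant isomorphism $\Sym^2(\std) \cong \Sym^2(\std^\ast)$; and $\Sym^2(\std^\ast)$ is canonically the space of binary quadratic forms $q(x,y) = \alpha x^2 + \beta xy + \gamma y^2$, on which $g \in \SL_2$ acts by substitution. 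Encoding such a $q$ by its Gram matrix $Q_q = \begin{psmallmatrix}\alpha & \beta/2\\ \beta/2 & \gamma\end{psmallmatrix}$, so that $q(v) = v^{T} Q_q v$, the substitution action is $Q \mapsto g^{-T} Q g^{-1}$ (up to whether one substitutes $g$ or $g^{-1}$).

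The key step is the matrix identity: for $A \in \sl_2$ the product $JA$ is symmetric, since $A^{T}J + JA = 0$ is precisely the condition $A \in \mathfrak{sp}_2 = \sl_2$; thus $A \mapsto JA$ is a linear isomorphism from $\sl_2$ onto symmetric $2\times 2$ matrices. It is $\SL_2$-equivariant because $J(gAg^{-1}) = (Jg)Ag^{-1} = g^{-T}(JA)g^{-1}$, using $Jg = g^{-T}J$, i.e.\ $g^{T}Jg = J$, again because $g \in \Sp_2$. So $A \mapsto JA$ intertwines the conjugation action on $\sl_2$ with the substitution action on binary quadratic forms, proving the asserted equivariance. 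Writing out $A = \begin{psmallmatrix} a & b\\ c & -a\end{psmallmatrix}$ gives $JA = \begin{psmallmatrix} c & -a\\ -a & -b\end{psmallmatrix}$, with associated form $cx^2 - 2axy - by^2$; after the sign/variable bookkeeping built into the choice of $J$ versus $-J$ or $J^{-1}$ and of $g$ versus $g^{-1}$ --- which affects only the sign of the cross term and the labeling of $x,y$ --- this is exactly $q_A(x,y) = cx^2 + 2axy - by^2$, and I would simply record the consistent package of conventions that produces that formula verbatim.

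Finally, the discriminant is immediate: $\mathrm{disc}(\alpha x^2 + \beta xy + \gamma y^2) = \beta^2 - 4\alpha\gamma = -4\det Q_q$, so $\mathrm{disc}(q_A) = -4\det(JA) = -4\det(J)\det(A) = -4\det(A)$ since $\det J = 1$; alternatively one just computes $\det A = -a^2 - bc$ and $\mathrm{disc}(cx^2 + 2axy - by^2) = 4a^2 + 4bc$ directly. The only thing demanding any care --- the \emph{main obstacle}, such as it is --- is keeping the duality and substitution-action conventions coherent so that the stated formula and the claimed $\SL_2$-equivariance hold with the same normalization; once conventions are fixed, every assertion reduces to a one-line computation with $2\times 2$ matrices.
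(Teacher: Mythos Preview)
Your argument is correct. The paper itself offers no proof of this statement: it is recorded as an \emph{observation} and left to the reader, with the subsequent warning merely flagging the distinction between the associated element of $\sl_2^\ast$ and the usual Gram matrix. Your approach via $A \mapsto JA$ (or $AJ$, depending on conventions), using the defining relation $g^T J g = J$ to transport conjugation on $\sl_2$ to the substitution action on symmetric matrices, is the natural way to make the observation precise, and your handling of the discriminant via $\det(JA) = \det(J)\det(A)$ is clean.

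One small point worth tightening: your computation with $J = \begin{psmallmatrix} 0 & 1\\ -1 & 0\end{psmallmatrix}$ gives the form $cx^2 - 2axy - by^2$, and you then wave at ``sign/variable bookkeeping'' to match the paper's $cx^2 + 2axy - by^2$. That is fine, but you could be explicit: using $A \mapsto AJ$ rather than $JA$ yields the Gram matrix $\begin{psmallmatrix} -b & a\\ a & c\end{psmallmatrix}$, i.e.\ the form $-bx^2 + 2axy + cy^2$, which is the paper's $q_A$ after swapping the roles of $x$ and $y$. The equivariance check goes through identically, since $(gAg^{-1})J = gA(g^{-1}J) = gAJg^{T} = g(AJ)g^{T}$, matching the substitution action $Q \mapsto gQg^T$ (i.e.\ $v \mapsto g^T v$). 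Fixing one such convention and stating it would remove the only loose end.
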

\begin{warning}
    Note that under \cref{obs: sl2 and binary quadratic}, the element of $\sl_2^\ast$ associated to a binary quadratic form $bx^2 + axy + cy^2$ is \textit{not} the symmetric matrix associated to the quadratic form! Indeed, the associated symmetric matrix is $\begin{psmallmatrix}
        b & a/2 \\
        a/2 & c
    \end{psmallmatrix}$, while the associated element of $\sl_2^\ast$ is $\begin{psmallmatrix}
        a/2 & c\\
        b & -a/2
    \end{psmallmatrix}$.
    
    Note, also, that we are relying quite heavily on the assumption that $2$ is invertible in $\ringcoeff$. Over $\Z$, one can in fact identify the space $(\AA^2 \otimes \AA^2)_{\Sigma_2}$ of binary quadratic forms with the \textit{coadjoint} representation $\sl_2^\ast \cong \pgl_2$ of $\SL_2$. Working over $\Z$ and keeping track of the difference between $\sl_2$ and $\pgl_2$ has the effect of eliminating extraneous factors of $2$ in our discussion below; but working over $\Z$ also introduces new complications (see \cref{rmk: prime 2 cayley}) which we do not wish to address in the present article. 
\end{warning}
\begin{construction}\label{def: cube and quadratic forms}
    The affine space $\AA^8 = \std^{\otimes 3}$ can be regarded as parametrizing cubes
    $$\xymatrix@=.75em{
    & b_2 \ar@{-}[rr] \ar@{-}'[d][dd] && d_3 \ar@{-}[dd] \\
    a \ar@{-}[ru] \ar@{-}[rr] \ar@{-}[dd] && b_1 \ar@{-}[ru] \ar@{-}[dd] & \\
    & d_1 \ar@{-}'[r][rr] && c, \\
    b_3 \ar@{-}[ru] \ar@{-}[rr] && d_2 \ar@{-}[ru] &
    }$$
    which we will represent by a tuple $(a, \vec{b}, c, \vec{d})$; we will often use the symbol $\cC$ to denote such a cube. If $\{e_1, e_2\}$ are a basis for $\std$, the above cube corresponds to the element of $\std^{\otimes 3}$ given by
    \begin{align*}
        a e_1 \otimes e_1 \otimes e_1 & + b_1 e_2 \otimes e_1 \otimes e_1 + b_2 e_1 \otimes e_2 \otimes e_1 + b_3 e_1 \otimes e_1 \otimes e_2 \\
        + d_1 e_1 \otimes e_2 \otimes e_2 & + d_2 e_2 \otimes e_1 \otimes e_2 + d_3 e_2 \otimes e_2 \otimes e_1 + c e_2 \otimes e_2 \otimes e_2.
    \end{align*}
    Associated to a cube $\cC$ are three pairs of matrices, given by slicing along the top, leftmost, or front faces:
    \begin{align*}
        M_1 = \begin{psmallmatrix}
            a & b_2\\
            b_3 & d_1
        \end{psmallmatrix}, \ & N_1 = \begin{psmallmatrix}
            b_1 & d_3\\
            d_2 & c
        \end{psmallmatrix}, \\
        M_2 = \begin{psmallmatrix}
            a & b_1\\
            b_3 & d_2
        \end{psmallmatrix}, \ & N_2 = \begin{psmallmatrix}
            b_2 & d_3\\
            d_1 & c
        \end{psmallmatrix}, \\
        M_3 = \begin{psmallmatrix}
            a & b_1\\
            b_2 & d_3
        \end{psmallmatrix}, \ & N_3 = \begin{psmallmatrix}
            b_3 & d_2\\
            d_1 & c
        \end{psmallmatrix};
    \end{align*}
    each of these defines a binary quadratic form
    $$q_i(x,y) = -\det(M_i x + N_i y).$$
    Explicitly,
    \begin{align*}
        q_1(x,y) & = \det(M_1) x^2 + (ac + b_1 d_1 - b_2 d_2 - b_3 d_3) xy + \det(N_1) y^2, \\
        q_2(x,y) & = \det(M_2) x^2 + (ac - b_1 d_1 + b_2 d_2 - b_3 d_3) xy + \det(N_2) y^2,\\
        q_3(x,y) & = \det(M_3) x^2 + (ac - b_1 d_1 - b_2 d_2 + b_3 d_3) xy + \det(N_3) y^2.
    \end{align*}
    Viewing $\sl_2^\ast$ as the space of binary quadratic forms as in \cref{obs: sl2 and binary quadratic}, these three quadratic forms define a map
    $$\mu: \std^{\otimes 3} \to \sl_2^{\ast, \times 3}.$$
    An easy check shows that this map is $\ld{G}$-equivariant.
\end{construction}

\begin{lemma}[{Cayley, \cite{cayley-original}}]\label{lem: cayley}
    The composite
    $$\std^{\otimes 3} \xar{\mu} \sl_2^{\ast, \times 3} \to \sl_2^{\ast, \times 3}\mmod \ld{G}$$
    factors through the diagonal inclusion $\sl_2^\ast\mmod \SL_2 \to \sl_2^{\ast, \times 3}\mmod \ld{G}$. In fact, the induced map $\std^{\otimes 3} \to \sl_2^\ast\mmod \SL_2$ defines an isomorphism
    $$\std^{\otimes 3}\mmod \ld{G} \xar{\sim} \sl_2^\ast\mmod \SL_2 \cong \AA^1\mmod (\Z/2).$$
\end{lemma}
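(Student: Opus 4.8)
\emph{Plan.} The statement has two halves: (i) the composite $\std^{\otimes 3}\xar{\mu}\sl_2^{\ast,\times 3}\to\sl_2^{\ast,\times 3}\mmod\ld{G}$ lands in the diagonal copy of $\sl_2^\ast\mmod\SL_2$, and (ii) the induced map of invariant quotients is an isomorphism. I would first make the target explicit: as $2$ is invertible in $\ringcoeff$, the trace form gives an $\SL_2$-equivariant isomorphism $\sl_2^\ast\cong\sl_2$, Chevalley restriction identifies $\sl_2^\ast\mmod\SL_2\cong\mathfrak{t}\mmod\W\cong\AA^1\mmod(\Z/2)$ with $\W=\Z/2$ the Weyl group acting on $\mathfrak{t}\cong\AA^1$ by negation, and the generator of $\ringcoeff[\sl_2^\ast]^{\SL_2}$ is $\det$, which by \cref{obs: sl2 and binary quadratic} is $-\tfrac14$ times the discriminant of the corresponding binary quadratic form. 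So $\sl_2^{\ast,\times 3}\mmod\ld{G}\cong\AA^3$ with coordinates $\mathcal C\mapsto\det q_i=-\tfrac14\,\mathrm{disc}(q_i)$, and the diagonal inclusion in the statement is the evident closed immersion $\AA^1\hookrightarrow\AA^3$.

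\emph{Step (i).} Here I must show $\mathrm{disc}(q_1)=\mathrm{disc}(q_2)=\mathrm{disc}(q_3)$ for every cube $\mathcal C=(a,\vec b,c,\vec d)$. Since $q_i=-\det(xM_i+yN_i)$ and the discriminant of a binary quadratic form is insensitive to an overall sign, $\mathrm{disc}(q_i)=\mathrm{disc}(\det(xM_i+yN_i))$, and independence of this expression from the slicing direction $i$ is precisely Cayley's theorem \cite{cayley-original} that the $2\times2\times2$ hyperdeterminant $\Delta(\mathcal C)$ is well defined (see also \cite{gelfand-hyperdet}); alternatively, from the formulas in \cref{def: cube and quadratic forms} one has
$$\mathrm{disc}(q_1)=(ac+b_1d_1-b_2d_2-b_3d_3)^2-4(ad_1-b_2b_3)(b_1c-d_2d_3)$$
together with the analogous expressions for $q_2$ and $q_3$, and a short expansion shows the three are equal. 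Writing $\Delta$ for this common value, the composite $\std^{\otimes 3}\to\sl_2^{\ast,\times 3}\mmod\ld{G}\cong\AA^3$ is $\mathcal C\mapsto(-\tfrac14\Delta,-\tfrac14\Delta,-\tfrac14\Delta)$; it factors through the diagonal, and the induced map $\std^{\otimes 3}\to\sl_2^\ast\mmod\SL_2\cong\AA^1$ is $-\tfrac14\Delta$.

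\emph{Step (ii).} As $\mu$ is $\ld{G}$-equivariant, this last map descends to $\bar\mu\colon\std^{\otimes 3}\mmod\ld{G}\to\AA^1$, classified by $\Delta\in\ringcoeff[\std^{\otimes 3}]^{\ld{G}}$, so the claim is $\ringcoeff[\std^{\otimes 3}]^{\SL_2^{\times 3}}=\ringcoeff[\Delta]$: that the hyperdeterminant generates \emph{all} of the invariants. This is Cayley's classical computation (\cite{cayley-original}; see \cite{gelfand-hyperdet}). If one wants to re-derive it: $\SL_2^{\times 3}$ is reductive, so $\ringcoeff[\std^{\otimes 3}]^{\ld{G}}$ is a normal graded $\ringcoeff$-domain with degree-zero part $\ringcoeff$; a dimension count shows $\std^{\otimes 3}\mmod\ld{G}$ is a curve (the generic $\SL_2^{\times 3}$-orbit is $7$-dimensional, e.g. the stabilizer of the cube with $a=c=1$ and all other entries $0$ contains the $2$-torus $\{(t_1,t_2,t_3):t_1t_2t_3=1\}$), and since $\Delta$ is absolutely irreducible in the polynomial ring $\ringcoeff[\std^{\otimes 3}]$ it is not a proper power there, which forces $\ringcoeff[\Delta]=\ringcoeff[\std^{\otimes 3}]^{\ld{G}}$. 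Combining with Step (i) yields $\std^{\otimes 3}\mmod\ld{G}\xar{\sim}\sl_2^\ast\mmod\SL_2\cong\AA^1\mmod(\Z/2)$. I expect Step (ii) to be the real obstacle: Step (i) together with a dimension count already give that $\Delta$ separates generic orbits, but the assertion needs $\Delta$ to generate the entire invariant ring, which is exactly the content of the classical hyperdeterminant theorem.
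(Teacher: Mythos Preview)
Your proposal is correct and follows essentially the same approach as the paper: identify the quotient map $\sl_2^{\ast,\times 3}\to\sl_2^{\ast,\times 3}\mmod\ld{G}$ with the triple of discriminants, verify by direct expansion that the three discriminants agree (the paper simply writes out the common value \cref{eq: common disc}), and then invoke the classical hyperdeterminant computation of Cayley/Gelfand--Kapranov--Zelevinsky for the isomorphism $\std^{\otimes 3}\mmod\ld{G}\cong\AA^1$. Your sketch of a re-derivation in Step~(ii) via normality, the dimension count, and irreducibility of $\Delta$ is a nice addition the paper omits, but it is not a different route---the paper just cites \cite[Chapter 14, Proposition 1.7]{gelfand-hyperdet} directly.
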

\begin{proof}
    The map $\sl_2^{\ast, \times 3} \to \sl_2^{\ast, \times 3}\mmod \ld{G}$ sends a triple of matrices to their determinants, or equivalently a triple of quadratic forms to their discriminants. Therefore, we need to check that the three quadratic forms of \cref{def: cube and quadratic forms} have the same discriminant. This is easy: one finds that their common discriminant is
    \begin{align}
        \det(q_i) & = a^2 c^2 + b_1^2 d_1^2 + b_2^2 d_2^2 + b_3^2 d_3^2 - 2 (a b_1 c d_1 + a b_2 c d_2 + a b_3 c d_3 \nonumber \\
        & + b_1 b_2 d_1 d_2 + b_1 b_3 d_1 d_3 + b_2 b_3 d_2 d_3) + 4 (a d_1 d_2 d_3 + b_1 b_2 b_3 c). \label{eq: common disc}
    \end{align}
    It remains to show that the map $\std^{\otimes 3}\mmod \ld{G} \to \AA^1$ defined by this polynomial is an isomorphism. This is stated/proved in \cite[Proposition 1.7 in Chapter 14]{gelfand-hyperdet}, and is due to Cayley (see \cite[Page 89]{cayley-original} for the original source!).
\end{proof}
\begin{notation}
    Write $\det$ to denote the map $\std^{\otimes 3}\to \sl_2^\ast\mmod \SL_2$ from \cref{lem: cayley}, so that if $\cC$ is a cube, $\det(\cC)$ is the quantity of \cref{eq: common disc}. 
\end{notation}

\begin{remark}\label{rmk: sympl on C2 cubed}
    The standard $\SL_2$-equivariant symplectic structure on $\std$ defines an $\SL_2^{\times 3}$-equivariant symplectic structure on $\std^{\otimes 3}$. This action is Hamiltonian, and one can verify by explicit calculation that the map $\mu: \std^{\otimes 3} \to \sl_2^{\ast, \times 3}$ from \cref{def: cube and quadratic forms} is in fact simply the moment map for this $\SL_2^{\times 3}$-action. In other words, if $\omega_1, \omega_2, \omega_3$ denote the standard symplectic forms on the three copies of $\std$, and $\omega = \omega_1 \otimes \omega_2 \otimes \omega_3$ is the resulting symplectic form on $\std^{\otimes 3}$, one can identify
    $$\mu: \std^{\otimes 3} \to \sl_2^{\ast, \times 3}, \ v\mapsto \left[\xi \mapsto \tfrac{1}{2}\omega(v, \xi v)\right].$$
    This gives a more ``invariant'' way to think about Bhargava's three quadratic forms.
    
    Along these lines, let us remark that \cite[Theorem 1]{bhargava-composition-i} implies that the span
    \begin{equation}\label{eq: std tensor 3 span}
        \xymatrix{
        & \std^{\otimes 3} \ar[dl]_-{\mu_1 \times \mu_2} \ar[dr]^-{\mu_3} & \\
        \sl_2^\ast \times_{\sl_2^\ast\mmod \SL_2} \sl_2^\ast & & \sl_2^\ast
        }
    \end{equation}
    given by the moment maps \textit{encodes} Gauss composition on quadratic forms, in the sense that given two ($\SL_2$-orbits of) quadratic forms $q_1$ and $q_2$ with the same discriminant, the ($\SL_2$-orbit of) the Gauss composition $-(q_1 + q_2)$ is given by $\mu_3((\mu_1 \times \mu_2)^{-1}(q_1, q_2))$. The above span induces a span of stacks
    $$\xymatrix{
    & \std^{\otimes 3}/\SL_2^{\times 3} \ar[dl]_-{\mu_1 \times \mu_2} \ar[dr]^-{\mu_3} & \\
    (\sl_2^\ast \times_{\sl_2^\ast\mmod \SL_2} \sl_2^\ast)/(\SL_2 \times \SL_2) & & \sl_2^\ast/\SL_2,
    }$$
    which, using the isomorphism $\Spin_4 \cong \SL_2 \times \SL_2$, can be identified with the Lagrangian correspondence \cite[Corollary 3.6.20]{ku-rel-langlands} for $H = \SO_3$ sitting inside $G = \PSO_4$.
\end{remark}

\begin{remark}\label{rmk: cayley as actual det}
    An alternative way of constructing $\det(\cC)$ is as follows. Write $\cC = e_1 \otimes v_1 + e_2 \otimes v_2$ with $v_1, v_2\in \std^{\otimes 2} \cong \AA^4$, and consider the symmetric bilinear form on $\std^{\otimes 2}$ given by
    $$\pdb{e_1 \otimes e_1, e_2 \otimes e_2} = -\pdb{e_1 \otimes e_2, e_2 \otimes e_1} = 1,$$
    and all other pairings zero. This is the symmetric form on $\std^{\otimes 2}$ induced from the standard symplectic form on $\std$. Then, one can identify
    $$\det(\cC) = \det \begin{psmallmatrix}
        \pdb{v_1, v_1} & \pdb{v_1, v_2} \\
        \pdb{v_2, v_1} & \pdb{v_2, v_2}
    \end{psmallmatrix}.$$
\end{remark}

\begin{remark}\label{rmk: prime 2 cayley}
    \cref{lem: cayley} is not quite true over $\FF_2$ (and hence not over $\Z$).
    One can already see the subtlety that arises over $\FF_2$ from the formula \cref{eq: common disc}: namely, the Cayley hyperdeterminant (appropriately normalized) admits a square root over $\FF_2$. Explicitly, if $\cC = (a, \vec{b}, c, \vec{d})$ is a cube and $\det(\cC)$ is defined by the formula \cref{eq: common disc}, one has
    $$\frac{\det(\cC)}{2} \equiv \frac{(ac + b_1 d_1 + b_2 d_2 + b_3 d_3)^2}{2} \pmod{2}.$$
    In fact, over an $\FF_2$-algebra, there is an analogue of \cref{lem: cayley} which states that the composite
    $$\std^{\otimes 3} \to \sl_2^{\ast, \times 3} \cong \fr{pgl}_2^{\times 3} \xar{\Tr} (\AA^1)^{\times 3}$$
    factors through the diagonal $\AA^1 \subseteq (\AA^1)^{\times 3}$; the resulting map $\std^{\otimes 3} \to \AA^1$ is given by the $\ld{G}$-invariant quadratic function
    $$\cC \mapsto \Tr(\cC) := ac + b_1 d_1 + b_2 d_2 + b_3 d_3.$$
    I expect that $\Tr$ defines an isomorphism $\std^{\otimes 3} \mmod \ld{G} \to \AA^1$ over $\FF_2$, i.e., $\H^0(\ld{G}; \Sym(\std^{\otimes 3, \ast})) \cong \FF_2[\Tr]$.
    In particular, this means that the Cayley hyperdeterminant does \textit{not} define an isomorphism $\det: \std^{\otimes 3}\mmod \ld{G} \to \AA^1$ over $\FF_2$.
    
    However, I expect more to be true (this is based on discussions with Akshay Venkatesh). Namely, the moment map should induce an isomorphism of derived schemes
    $$\xymatrix{
    \std^{\otimes 3}\mmod_\der \ld{G} \ar[dr]^-\mu \ar[d]_-{\sim} \\
    \sl_2^\ast\mmod_\der \SL_2 \ar[r]_-{\mathrm{diag}} & (\sl_2^\ast\mmod_\der \SL_2)^{\times 3}
    }$$
    even over $\Z$, where the symbol $\mmod_\der$ denotes the \textit{derived} invariant-theoretic quotient (i.e., $V\mmod_\der H = \spec R\Gamma(BH; \Sym(V^\ast))$). We also expect that 
    $$\H^\ast(\SL_{2,\Z}; \sh^{1/2} \Sym(\sl_{2,\Z}(2))) \cong \H^\ast_{\SO_3}(\ast; \Z).$$
    There is an isomorphism $\H^\ast_{\SO_3}(\ast; \Z) \cong \Z[p_1, e]/2e$, where the Euler class $e$ lives in cohomological degree $3$. For example, the class $p_1$ should correspond to the determinant $\sl_2^\ast \to \AA^1$, and the fact that it has a square root modulo $2$ corresponds to the fact that the determinant map admits a square root over $\FF_2$, given by the trace $\sl_2^\ast \cong \pgl_2 \to \AA^1$. Similarly, the class $e$ should correspond to the nontrivial extension of $\sl_2^\ast \cong \Sym^2(\std)$ given by $\fr{gl}_2 \cong \std \otimes \std$.
    %; explictly, this extension is described via the $1$-cocycle
    %$$\SL_2 \times \Sym^2 \to \AA^1, \ \left(\begin{pmatrix}
    %    g_{00} & g_{01}\\
    %    g_{10} & g_{11}
    %\end{pmatrix}, ax^2 + bxy + cy^2\right) \mapsto a g_{00} g_{11} + b g_{01} g_{10} + c g_{01} g_{11}.$$
\end{remark}

We will now define an analogue of the Kostant slice, as it will be needed to apply \cite[Theorem 3.6.4]{ku-rel-langlands} (see \cite[Strategy 1.2.1(b)]{ku-rel-langlands}). For the purposes of our discussion, one should view this Kostant section as an analogue of the construction of the companion matrix associated to a characteristic polynomial.
\begin{construction}\label{cstr: kostant}
    If $n$ is an integer, let $\vec{n}$ denote the triple $(n,n,n)$. 
    Let 
    $$\kappa: \sl_2^\ast\mmod \SL_2\cong \AA^1\mmod (\Z/2) \cong \AA^1 \to \std^{\otimes 3}$$
    denote the map sending $a^2\mapsto (a^2,\vec{0},0,\vec{1})$.
    This corresponds to the cube
    $$\xymatrix@=.75em{
    & 0 \ar@{-}[rr] \ar@{-}'[d][dd] && 1 \ar@{-}[dd] \\
    a^2 \ar@{-}[ru] \ar@{-}[rr] \ar@{-}[dd] && 0 \ar@{-}[ru] \ar@{-}[dd] & \\
    & 1 \ar@{-}'[r][rr] && 0. \\
    0 \ar@{-}[ru] \ar@{-}[rr] && 1 \ar@{-}[ru] &
    }$$
    In this case, $\det(\kappa(a^2)) = 4a^2$, so that $\kappa$ defines a section of $\det$ (at least up to the unit $4\in k^\times$). The associated quadratic forms are all equal, and are given by
    $$q_1(x,y) = q_2(x,y) = q_3(x, y) = a^2 x^2 - y^2,$$
    which corresponds to the traceless matrix $\begin{psmallmatrix}
        0 & 1\\
        a^2 & 0
    \end{psmallmatrix} \in \sl_2^\ast$. (Note that this is exactly the companion matrix associated to the characteristic polynomial $y^2 - a^2$.)
\end{construction}

\begin{remark}
    Fix an integer $n$. Then the $\ld{G}$-variety $\std^{\otimes 3}$ admits a natural grading, where the entries of a cube $(a,\vec{b},c,\vec{d})$ have the following weights: $a$ lives in weight $-4n$, $b$ lives in weight $-2n$, $c$ lives in weight $2n$, and $d$ lives in weight $0$. Write $\std^{\otimes 3}(4n,\vec{2n},-2n,\vec{0})$ to denote the associated graded variety.
    Equip $\sl_2^\ast$ with the grading where the entries of a matrix $\begin{psmallmatrix}
        a & b\\
        c & -a
    \end{psmallmatrix}$ have the following weights: $a$ lives in weight $-2n$, $b$ lives in weight $0$, and $c$ lives in weight $-4n$. Similarly, equip $\SL_2$ with the grading coming from $2n\rho$, so that the entries of a matrix $\begin{psmallmatrix}
        a & b\\
        c & d
    \end{psmallmatrix}$ have the following weights: $a$ and $d$ live in weight $0$, $b$ lives in weight $2n$, and $c$ lives in weight $-2n$.
    With these gradings, the $\SL_2^{\times 3}$-equivariant map $\mu: \std^{\otimes 3} \to \sl_2^{\ast, \times 3}$ is a \textit{graded} map, and $\kappa$ defines a graded map $\sl_2^\ast(2n)\mmod \SL_2 \cong \AA^1(4n) \to \std^{\otimes 3}(4n,\vec{2n},-2n,\vec{0})$.
    The cases $n=1$ and $n=3$ will be relevant below (corresponding to \cref{thm: main} and \cref{thm: G2 main}, respectively).
\end{remark}

\begin{remark}\label{rmk: quantum information}
    As mentioned in \cref{rmk: intro-quantum}, the quotient stack $\std^{\otimes 3}/\SL_2^{\times 3}$ is studied in quantum information theory. For instance, in \cite{dur-vidal-cirac}, D\"ur-Vidal-Cirac study the orbit structure of $\SL_2^{\times 3}$ acting on $\std^{\otimes 3}$ (in particular, they recover \cref{fig: orbits C2 cubed} independently of \cite{gelfand-hyperdet}). See also \cite{distributed-entanglement}, where the Cayley hyperdeterminant is rediscovered as \cite[Equations 20 and 21]{distributed-entanglement}.
    
    For the interested reader, let us describe the translation between our notation/terminology and that of quantum information theory. Our base field will now be $\cc$. An element of $\std^{\otimes n}$ (really, of the projective space $\PP(\std^{\otimes n}) \cong \PP^{2^n - 1}$) is called an \textit{$n$-qubit}, and the action of $\SL_2^{\times n}$ is via \textit{stochastic local operations and classical communication} (SLOCC) operators (replacing $\SL_2^{\times n}$ by $\GL_2^{\times n}$ simply amounts to dropping the word ``stochastic''). The space $\std$ is equipped with a basis $\{|0\rangle, |1\rangle\}$, and a cube $\cC = (a, \vec{b}, c, \vec{d}) \in \std^{\otimes 3}$ corresponds to the three-qubit\footnote{Technically, a qubit is required to have norm $1$, so one must rescale $\cC$ by $\sqrt{a^2 + \|\vec{b}\|^2 + c^2 + \|\vec{d}\|^2}$; but this could in theory introduce a singularity when $a^2 + \|\vec{b}\|^2 + c^2 + \|\vec{d}\|^2 = 0$. We will ignore this (important!) point below.}
    \begin{align*}
        a |000\rangle & + b_1 |100\rangle + b_2 |010\rangle + b_3 |001\rangle \\
        + d_1 |011\rangle & + d_2 |101\rangle + d_3 |110\rangle + c |111\rangle.
    \end{align*}
    Here, the bra-ket notation $|ijk\rangle$ means $|i\rangle \otimes |j\rangle \otimes |k\rangle$.
    The state 
    $$\tfrac{1}{\sqrt{2}}(1, \vec{0}, 1, \vec{0}) = \tfrac{1}{\sqrt{2}}\left(|000\rangle + |111\rangle\right)$$
    is known as the \textit{Greenberger–Horne–Zeilinger} (GHZ) state, and the state 
    $$\tfrac{1}{\sqrt{3}} \kappa(0) = \tfrac{1}{\sqrt{3}} (0,\vec{1},0,\vec{0}) = \tfrac{1}{\sqrt{3}}\left(|001\rangle + |010\rangle + |100\rangle\right)$$
    is called the \textit{W} state. These two states are known to represent two very different kinds of quantum entanglement; the reason for this is simply that the Cayley hyperdeterminant of the GHZ state is nonzero, but the Cayley hyperdeterminant of the W state vanishes. Nevertheless, the proof of \cref{prop: stab of kostant} shows that there is a natural \textit{degeneration} of (the SLOCC/$\SL_2^{\times 3}$-equivalence class of) the GHZ state into the W state. (This degeneration is $\kappa(a^2)$, whose norm is $\sqrt{a^4 + 3}$; in particular, it passes through zero when $a = \sqrt[4]{-3}$, and so this one-parameter family is not physical!)
    In fact, this state already appears as \cite[Equation 20]{dur-vidal-cirac}.
\end{remark}

One of the key properties of the Kostant section/companion matrices is that a matrix $A \in \sl_2^\ast$ is conjugate to $\kappa(\det(A))$ if and only if $A$ is regular (i.e., the minimal polynomial of $A$ agrees with its characteristic polynomial), if and only if $A$ is nonzero. We will now prove an analogous result concerning $\kappa: \AA^1 \to \std^{\otimes 3}$. 
\begin{prop}\label{prop: G-orbit kostant codim}
    The $\ld{G}$-orbit of the image of $\kappa$ is a dense open subscheme whose complement has codimension $3$.
\end{prop}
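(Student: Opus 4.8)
The plan is to compute the $\ld{G}$-saturation $U := \ld{G}\cdot\kappa(\AA^1)$ explicitly as the complement of the locus of ``decomposable'' cubes, and then to read off its codimension from the Segre embedding. Since the assertion is geometric, I would first base change to an algebraically closed field. Because $\det$ is $\ld{G}$-invariant (\cref{lem: cayley}) while $\det(\kappa(t)) = 4t$ (\cref{cstr: kostant}), the orbit morphism $\ld{G}\times\AA^1 \to \std^{\otimes 3}$ sits in a commutative triangle with the projection $\ld{G}\times\AA^1\to\AA^1$ and the map $\tfrac14\det : \std^{\otimes 3}\to\AA^1$; in particular its image meets every fibre of $\det$, and $U\cap\det^{-1}(c) = \ld{G}\cdot\kappa(c/4)$ is a single $\ld{G}$-orbit for every $c\in\AA^1$. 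So the whole problem reduces to locating the orbit of $\kappa(c/4)$ inside $\det^{-1}(c)$ for each $c$.

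For this I would invoke the classical $\SL_2^{\times 3}$-orbit stratification of the space of $2\times 2\times 2$-cubes (the orbit picture of \cite[Chapter 14]{gelfand-hyperdet}; see also \cite{dur-vidal-cirac}). Set $B := B_1\cup B_2\cup B_3 \subseteq \std^{\otimes 3}$, where $B_i$ is the locus of cubes decomposable in the $i$-th tensor slot, i.e.\ of the form $v\otimes w$ with $v\in\std$ in slot $i$ and $w\in\std^{\otimes 2}$. The classification gives: (i) for $c\neq 0$ the fibre $\det^{-1}(c)$ is a single $\ld{G}$-orbit; (ii) the fibre $\det^{-1}(0)$ is an irreducible hypersurface containing $B$, and $\det^{-1}(0)\setminus B$ is a single dense $\ld{G}$-orbit. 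Since no decomposable cube has nonzero Cayley hyperdeterminant we have $B\subseteq\det^{-1}(0)$, and a one-line check shows $\kappa(0)=(0,\vec{0},0,\vec{1})$ is decomposable in no slot (viewed in $\std\otimes\std^{\otimes 2}$ along any of its three slicings it has tensor rank $2$). Hence $\kappa(c/4)\notin B$ for all $c$, so $\ld{G}\cdot\kappa(c/4)=\det^{-1}(c)\setminus B$, and therefore
$$U = \bigcup_{c\in\AA^1}\bigl(\det^{-1}(c)\setminus B\bigr) = \std^{\otimes 3}\setminus B.$$

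To conclude, I would observe that each $B_i$ is the affine cone over the Segre embedding $\PP^1\times\PP^3 \cong \PP(\std)\times\PP(\std^{\otimes 2}) \hookrightarrow \PP(\std^{\otimes 3}) = \PP^7$, hence is a closed subvariety of dimension $1+3+1=5$; thus $B$ is closed of dimension $5$, i.e.\ of codimension $3$ in $\std^{\otimes 3}=\AA^8$. Then $U = \std^{\otimes 3}\setminus B$ is open, it is dense (as $B$ is a proper closed subset of the irreducible variety $\AA^8$), and its complement has codimension $3$, which is precisely the assertion.

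The main obstacle is input (i)--(ii); the rest is the Segre dimension count and bookkeeping. Of these, (i) can be recovered without appealing to \cite{gelfand-hyperdet}: over an algebraically closed field, a cube with $\det\neq 0$ has all three Bhargava forms $q_i$ nondegenerate, so by \cite[Theorem 1]{bhargava-composition-i} it corresponds to a triple of ideal classes of the \emph{split} quadratic ring of the given discriminant, whose class group is trivial, so there is exactly one orbit over each nonzero value of $\det$ (using also that $\det$ separates these orbits, by \cref{lem: cayley}). Statement (ii) --- that $\kappa(0)$ generates the unique dense orbit of the null cone, with complement exactly the decomposable locus $B$ --- is the one point I would simply take from the known stratification of $2\times 2\times 2$-cubes, since a from-scratch proof amounts to the same orbit analysis.
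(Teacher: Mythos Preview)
Your proposal is correct and follows essentially the same approach as the paper: both arguments invoke the $\SL_2^{\times 3}$-orbit classification of \cite[Chapter 14]{gelfand-hyperdet} to see that $\det^{-1}(c)$ is a single orbit for $c\neq 0$, that $\kappa(0)$ lies in the $7$-dimensional dense orbit of $\det^{-1}(0)$, and that the remaining orbits have dimension $\leq 5$. Your identification of the complement as the decomposable locus $B$ (the cone over the three Segre $\PP^1\times\PP^3$'s) is exactly the content of the paper's subsequent remark on the orbit closures, and your alternative justification of (i) via \cite[Theorem 1]{bhargava-composition-i} is a nice touch not in the paper, though both ultimately defer the null-cone statement (ii) to the GKZ classification.
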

\begin{proof}
    We will use the classification of $\ld{G}$-orbits on $\std^{\otimes 3}$ as in \cite[Example 4.5 in Chapter 14]{gelfand-hyperdet}; see \cref{fig: orbits C2 cubed} for a graph of the seven orbits of $\ld{G}$ on $\std^{\otimes 3}$. Namely, if $\lambda \neq 0$, all elements of $\det^{-1}(\lambda)$ are in a single $\ld{G}$-orbit. (In fact, all elements in the fiber $\det^{-1}(1)$ are in the $\ld{G}$-orbit of $(1, \vec{0}, 1, \vec{0})$.) The $\ld{G}$-orbit of $\det^{-1}(\GG_m)$ is open and dense, and hence is $8$-dimensional; moreover, it agrees with the $\ld{G}$-orbit of $\kappa(\GG_m)$. Next, there is a maximal $\ld{G}$-orbit inside the fiber $\det^{-1}(0)$, given by the orbit of $(0,\vec{0}, 0, \vec{1}) = \kappa(0)$. This orbit is $7$-dimensional, and the largest $\ld{G}$-orbits contained in the complement $\det^{-1}(0) - \ld{G}\cdot \kappa(0)$ have dimension $5$. In particular, the complement of $\ld{G}\cdot \kappa(\AA^1) \subseteq \std^{\otimes 3}$ has dimension $5$, i.e., codimension $8-5=3$.
\end{proof}

% https://q.uiver.app/#q=WzAsNyxbMCwxLCJcXGthcHBhKFxcR0dfbSkiXSxbMSwxLCJcXGthcHBhKDApIl0sWzIsMCwiKDEsXFx2ZWN7MH0sMCwoMSwwLDApKSJdLFsyLDEsIigxLFxcdmVjezB9LDAsKDAsMSwwKSkiXSxbMiwyLCIoMSxcXHZlY3swfSwwLCgwLDAsMSkpIl0sWzMsMSwiKDEsXFx2ZWN7MH0sMCxcXHZlY3swfSkiXSxbNCwxLCIoMCxcXHZlY3swfSwwLFxcdmVjezB9KSJdLFswLDEsIiIsMCx7InN0eWxlIjp7ImhlYWQiOnsibmFtZSI6Im5vbmUifX19XSxbMSwyLCIiLDAseyJzdHlsZSI6eyJoZWFkIjp7Im5hbWUiOiJub25lIn19fV0sWzEsMywiIiwwLHsic3R5bGUiOnsiaGVhZCI6eyJuYW1lIjoibm9uZSJ9fX1dLFsxLDQsIiIsMCx7InN0eWxlIjp7ImhlYWQiOnsibmFtZSI6Im5vbmUifX19XSxbMiw1LCIiLDAseyJzdHlsZSI6eyJoZWFkIjp7Im5hbWUiOiJub25lIn19fV0sWzMsNSwiIiwxLHsic3R5bGUiOnsiaGVhZCI6eyJuYW1lIjoibm9uZSJ9fX1dLFs0LDUsIiIsMSx7InN0eWxlIjp7ImhlYWQiOnsibmFtZSI6Im5vbmUifX19XSxbNSw2LCIiLDEseyJzdHlsZSI6eyJoZWFkIjp7Im5hbWUiOiJub25lIn19fV1d
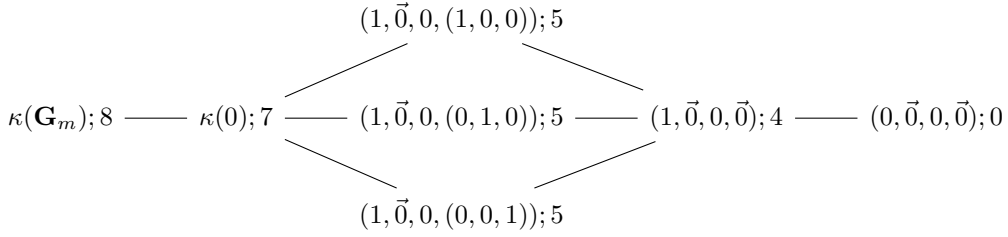
\begin{figure}[H]
\adjustbox{scale=1,center}{%
\begin{tikzcd}
	&& {(1,\vec{0},0,(1,0,0)); 5} \\
	{\kappa(\GG_m); 8} & {\kappa(0); 7} & {(1,\vec{0},0,(0,1,0)); 5} & {(1,\vec{0},0,\vec{0}); 4} & {(0,\vec{0},0,\vec{0}); 0} \\
	&& {(1,\vec{0},0,(0,0,1)); 5}
	\arrow[no head, from=2-1, to=2-2]
	\arrow[no head, from=2-2, to=1-3]
	\arrow[no head, from=2-2, to=2-3]
	\arrow[no head, from=2-2, to=3-3]
	\arrow[no head, from=1-3, to=2-4]
	\arrow[no head, from=2-3, to=2-4]
	\arrow[no head, from=3-3, to=2-4]
	\arrow[no head, from=2-4, to=2-5]
\end{tikzcd}
}
\captionsetup{width=\linewidth}
\caption[]{$\ld{G}$-orbits on $\std^{\otimes 3}$, representatives, and their dimensions (indicated after the semicolon), connected by closure. Note that $\kappa(0) = (0,\vec{0},0,\vec{1})$, and that the $\ld{G}$-orbit of $\kappa(1) = (1,\vec{0},0,\vec{1})$ is the same as the $\ld{G}$-orbit of $(1,\vec{0},1,\vec{0})$.}
\label{fig: orbits C2 cubed}
\end{figure}
\begin{remark}\label{rmk: G-orbit segre}
    As explained in \cite[Example 4.5 in Chapter 14]{gelfand-hyperdet}, the closure of the associated orbits inside $\PP(\std^{\otimes 3}) = \PP^7$ can be described as follows. First, the closure of the generic orbit is $\PP^7$. Next, the closure of the orbit of next smallest dimension is the zero locus of $\det$, which cuts out the dual variety of the Segre embedding $(\PP^1)^{\times 3} \hookrightarrow \PP^7$ (just as the usual determinant for $2 \times 2$-matrices cuts out the quadric $\PP^1 \times \PP^1 \hookrightarrow \PP^3$). The projective orbit associated to $(1,\vec{0},0,(0,1,0))$, say, is cut out inside the locus $\{\det = 0\}$ by the Segre embedding $\PP(\std) \times \PP(\std^{\otimes 2}) = \PP^1 \times \PP^3 \to \PP^7$. Finally, the minimal nonzero orbit is cut out by the Segre embedding $(\PP^1)^{\times 3} \to \PP^7$.
\end{remark}

\begin{remark}\label{rmk: SOn more general}
    More generally, let $\std_n$ denote the standard $n$-dimensional representation of $\SO_n$, so that the symplectic vector space $\std \otimes \std_n$ is equipped with an action of $\SL_2 \times \SO_n$. Using \cite[Section 7]{sato-kimura-prehomogeneous}, one finds that the obvious analogue of the formula for $\det(\cC)$ in \cref{rmk: cayley as actual det} defines a map $\std \otimes \std_n \to\AA^1\mmod (\Z/2)$ which induces an isomorphism
    $$(\std \otimes \std_n)\mmod (\SL_2 \times \SO_n) \cong \AA^1\mmod (\Z/2).$$
    If we allow ourselves a square root of $2$, then \cref{prop: G-orbit kostant codim} admits an analogue in this more general setting (at least if one works over $\cc$): there is a Kostant slice $\kappa: \AA^1\mmod (\Z/2) \to \std \otimes \std_n$ whose $\SL_2 \times \SO_n$-orbit is open and has complement of codimension $3$. Namely, assume $n = 2j$ is even for simplicity (a slight variant of this construction will work for odd $n$), so that without loss of generality, the symmetric bilinear form on $\std_n$ is given by $\begin{psmallmatrix}
        0 & 1\\
        1 & 0
    \end{psmallmatrix}^{\oplus j}$. If $e_1, \cdots, e_{2j}$ is a basis for $\std_n$, let $v_1 = a^2e_1 + e_2$, and let $v_2 = \sum_{i=2}^j (e_{2i-1} + e_{2i})$. Then $\pdb{v_1, v_1} = 2a^2$, $\pdb{v_2, v_2} = 2(j-1)$, and $\pdb{v_1, v_2} = 0$. If $e_1, e_2$ is a basis for $\std$, the Kostant slice sends
    $$\kappa: \AA^1\mmod (\Z/2) \to \std \otimes \std_n, \ a^2 \mapsto \tfrac{1}{\sqrt{2}} e_1 \otimes v_1 + \tfrac{1}{\sqrt{2(j-1)}} e_2 \otimes v_2.$$
    It is easy to check that this map does indeed give a section of $\det$.
    To check that the $\SL_2 \times \SO_n$-orbit of $\kappa$ has complement of codimension $3$, we need an analogue of \cref{rmk: G-orbit segre}. This succumbs to an analysis similar to that of \cite[Chapter 14]{gelfand-hyperdet}. One finds that if $n\geq 5$, the poset of closures of $\SL_2 \times \SO_n$-orbits in $\PP(\std \otimes \std_n) \cong \PP^{2n-1}$ is as shown in \cref{fig: orbits C2 tensor Cn}. The case $n=4$ is ``degenerate'' and one instead gets \cref{fig: orbits C2 cubed}.
    % https://q.uiver.app/#q=WzAsNSxbMCwxLCJcXFBQKFxcc3RkIFxcb3RpbWVzIFxcc3RkX24pIl0sWzEsMSwiXFxkZXQgPSAwIl0sWzIsMiwiWCJdLFszLDAsIlkiXSxbNCwxLCJYIFxcY2FwIFkiXSxbMCwxLCIiLDAseyJzdHlsZSI6eyJoZWFkIjp7Im5hbWUiOiJub25lIn19fV0sWzEsMiwiIiwwLHsic3R5bGUiOnsiaGVhZCI6eyJuYW1lIjoibm9uZSJ9fX1dLFsyLDQsIiIsMCx7InN0eWxlIjp7ImhlYWQiOnsibmFtZSI6Im5vbmUifX19XSxbMyw0LCIiLDIseyJzdHlsZSI6eyJoZWFkIjp7Im5hbWUiOiJub25lIn19fV0sWzEsMywiIiwyLHsic3R5bGUiOnsiaGVhZCI6eyJuYW1lIjoibm9uZSJ9fX1dXQ==
    \begin{figure}[H]
    \adjustbox{scale=1,center}{%
    \begin{tikzcd}
	&&& \PP^1 \times \PP^{n-1} \\
	{\PP(\std \otimes \std_n)} & \{\det = 0\} &&& {X \cap (\PP^1 \times \PP^{n-1})} \\
	&& X
	\arrow[no head, from=2-1, to=2-2]
	\arrow[no head, from=2-2, to=3-3]
	\arrow[no head, from=3-3, to=2-5]
	\arrow[no head, from=1-4, to=2-5]
	\arrow[no head, from=2-2, to=1-4]
    \end{tikzcd}
    }
    \captionsetup{width=\linewidth}
    \caption[]{$\SL_2 \times \SO_n$-orbit closures on $\std \otimes \std_n$, connected by closure. The generic orbit is given by the nonvanishing of $\det$. If an element of $\std \otimes \std_n$ is given by $e_1 \otimes v_1 + e_2 \otimes v_2$ with $v_1, v_2\in \std_n$, the subvariety $X$ has codimension $3$, and is cut out by $\begin{psmallmatrix}
        \pdb{v_1, v_1} & \pdb{v_1, v_2} \\
        \pdb{v_2, v_1} & \pdb{v_2, v_2}
    \end{psmallmatrix} = 0$. Moreover, the inclusion $\PP^1 \times \PP^{n-1} \hookrightarrow \PP^{2n-1}$ is the Segre embedding, i.e., is cut out by $v_1 \wedge v_2 = 0\in \wedge^2 \std_n$.}
    \label{fig: orbits C2 tensor Cn}
    \end{figure}
    
    {The motivation for this example comes from attempting to generalize the discussion in \cite[Section 4]{bhargava-composition-i}. Indeed, removing the vertex in the Dynkin diagram of type $\mathrm{D}_{j+2}$ which is connected to the affine root in the extended Dynkin diagram defines a maximal parabolic subgroup $P$ of $\SO_{2j+2}$, and its Levi quotient $L$ is $\SL_2 \times \SO_{2j}$. (When this procedure is applied to the Dynkin diagram of a general semisimple $G$, the resulting Levi subgroup is the centralizer of the $\SL_2$ embedded in $G$ via the highest root of $\g$.) If $U$ denotes the unipotent radical of $P$, then $L$ acts on the vector space $U/[U,U]$ by conjugation, and the Lie bracket on $U$ defines a symplectic form on $U/[U,U]$. With a bit of pain, one can check that $U/[U,U] \cong \std \otimes \std_{2j}$ as a symplectic $L \cong \SL_2 \times \SO_{2j}$-representation.
    
    The same construction with the Dynkin diagram of type $\mathrm{B}_{j+2}$ produces $\SL_2 \times \SO_{2j+1}$ acting on $\std \otimes \std_{2j+1}$. Doing this procedure for the other Dynkin diagrams produces some of the ``vectorial'' examples in one of the columns in \cite[Table 1.5.1]{bzsv}. For example, the type $\mathrm{A}_{n+1}$ Dynkin diagram produces $\GL_n$ acting on $T^\ast(\std_n)$, and I believe the Dynkin diagram for $\mathrm{E}_6$ will produce $\SL_6$ acting on $\wedge^3 \std_6$. One also obtains some representations \textit{not} included by \cite{bzsv}; for instance, the Dynkin diagram of $\G_2$ will produce $\SL_2$ acting on $\Sym^3(\std)$, which is not ``hyperspherical'' (see \cite[Example 5.1.10]{bzsv}). For this last calculation, see \cite[Pages 160-161]{springer-linear-algebraic-groups}.
    I do not yet understand the significance of these observations in the context of relative geometric Langlands.}
\end{remark}

This leads us to the main calculation. It implies, for instance, that the stabilizer (inside $\SL_2^{\times 3}$) of a element of $\std^{\otimes 3}$ with nonvanishing Cayley hyperdeterminant can be identified with $\GG_m^2 \cong \ker(\GG_m^3 \xar{\text{prod}} \GG_m)$.
\begin{prop}\label{prop: stab of kostant}
    Let $\ld{J}$ denote the group scheme over $\sl_2^\ast\mmod \SL_2 \cong \spec \ringcoeff[a^2]$ of regular centralizers for $\SL_2$, so that
    \begin{align*}
        \ld{J} & \cong \spec \ringcoeff[a, \alpha^{\pm 1}, \tfrac{\alpha - \alpha^{-1}}{a}]^{\Z/2} \cong \spec \ringcoeff[a^2, \alpha + \alpha^{-1}, \tfrac{\alpha - \alpha^{-1}}{a}] \\
        & \cong \ker(\Res_{\ringcoeff[a]/\ringcoeff[a^2]} \GG_m \xar{\Nm} \GG_m),
    \end{align*}
    where the action of $\Z/2$ sends $a \mapsto -a$ and $\alpha \mapsto \alpha^{-1}$, and the group structure is such that $\alpha$ is grouplike.
    Then there is an isomorphism
    $$\sl_2^\ast\mmod \SL_2 \times_{\std^{\otimes 3}/\ld{G}} \sl_2^\ast\mmod \SL_2 \cong \ker(\ld{J} \times_{\sl_2^\ast\mmod \SL_2} \ld{J} \times_{\sl_2^\ast\mmod \SL_2} \ld{J} \xar{\mathrm{prod}} \ld{J})$$
    of group schemes over $\sl_2^\ast\mmod \SL_2 = \spec \ringcoeff[a^2]$; of course, this group scheme is in turn isomorphic to $\ld{J} \times_{\sl_2^\ast\mmod \SL_2} \ld{J}$.
\end{prop}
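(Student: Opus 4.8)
The plan is to recognize the displayed fiber product as the stabilizer group scheme of the Kostant section $\kappa\colon \sl_2^\ast\mmod \SL_2 \cong \spec\ringcoeff[a^2]\to \std^{\otimes 3}$ of \cref{cstr: kostant}, and then to compute that stabilizer by transporting it along the moment map $\mu$ to the much simpler variety $\sl_2^{\ast,\times 3}$, whose stabilizers are governed by $\ld J$. For the first point: $\det\circ\kappa$ is multiplication by $4$ on $\sl_2^\ast\mmod\SL_2\cong\AA^1$ (\cref{cstr: kostant}) and $\det$ is $\ld G$-invariant (\cref{lem: cayley}), so for any test scheme $T$ an isomorphism in $\std^{\otimes 3}/\ld G$ between two $T$-points $\kappa\circ f$ and $\kappa\circ g$ of $\std^{\otimes 3}$ forces $4f = 4g$, hence $f=g$ since $4\in\ringcoeff^\times$. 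Passing to $T$-points then identifies $\sl_2^\ast\mmod\SL_2\times_{\std^{\otimes 3}/\ld G}\sl_2^\ast\mmod\SL_2$ with the group scheme $\Stab_{\ld G}(\kappa)$ over $\sl_2^\ast\mmod\SL_2$ whose fiber at $a^2$ is the stabilizer of $\kappa(a^2)$ in $\ld G=\SL_2^{\times 3}$. By \cref{prop: G-orbit kostant codim} (see also \cref{fig: orbits C2 cubed}) the $\ld G$-orbit of $\kappa(a^2)$ is $7$-dimensional for every value of $a^2$, so $\Stab_{\ld G}(\kappa)$ is flat of relative dimension $2$ over $\sl_2^\ast\mmod\SL_2$; this follows by miracle flatness applied to the action map $\ld G\times(\sl_2^\ast\mmod\SL_2)\to\std^{\otimes 3}$, whose image is the regular open subscheme of \cref{prop: G-orbit kostant codim} and whose fibers are equidimensional.

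\textbf{Step 2 (transport along the moment map).}
By \cref{cstr: kostant} the three binary quadratic forms attached to $\kappa(a^2)$ all coincide with the companion matrix of $y^2-a^2$, which is a regular element of $\sl_2^\ast$ for every $a^2$ (regular nilpotent when $a^2=0$). Hence $\mu\circ\kappa$ is the diagonal composed with a section of the regular locus, and, $\mu$ being $\ld G$-equivariant (\cref{def: cube and quadratic forms}) and the centralizer of a regular element being the corresponding fiber of $\ld J$, we get a closed immersion of group schemes over $\sl_2^\ast\mmod\SL_2$
$$\Stab_{\ld G}(\kappa)\ \hookrightarrow\ \Stab_{\ld G}(\mu\circ\kappa)\ =\ \ld J\times_{\sl_2^\ast\mmod\SL_2}\ld J\times_{\sl_2^\ast\mmod\SL_2}\ld J.$$

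\textbf{Step 3 (the image is $\ker(\mathrm{prod})$).}
Writing a general element of $\ld J_{a^2}$ acting on $\std$ as $\begin{psmallmatrix} u & a^2 v\\ v & u\end{psmallmatrix}$ with $u^2-a^2v^2=1$, the grouplike coordinate is $\alpha=u+va$ and $\mathrm{prod}$ is $(\alpha_1,\alpha_2,\alpha_3)\mapsto\alpha_1\alpha_2\alpha_3$. Imposing that $g_1\otimes g_2\otimes g_3$ fix the cube $\kappa(a^2)=a^2\,e_1^{\otimes 3}+(e_1\otimes e_2\otimes e_2+e_2\otimes e_1\otimes e_2+e_2\otimes e_2\otimes e_1)$, the $e_1^{\otimes 3}$- and $e_2^{\otimes 3}$-coefficients alone become, after clearing a common factor $a^2$ (legitimate since $\mathcal O(\ld J^{\times 3})$ is $a^2$-torsion-free) and using $u_i^2-a^2v_i^2=1$, exactly the two matrix entries of the identity of $\ld J$ applied to $g_1g_2g_3$; thus $\Stab_{\ld G}(\kappa)\subseteq\ker(\mathrm{prod})$ as closed subschemes of $\ld J^{\times 3}$ over all of $\sl_2^\ast\mmod\SL_2$. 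For the reverse inclusion away from $a^2=0$, use the identity $\kappa(a^2)=\tfrac{1}{2a}(f_1^{\otimes 3}+f_2^{\otimes 3})$ in the eigenbasis $f_1=ae_1+e_2$, $f_2=ae_1-e_2$ of the companion matrix, on which $g_i$ acts by $\alpha_i$ and $\alpha_i^{-1}$: the condition $\alpha_1\alpha_2\alpha_3=1$ visibly fixes $\kappa(a^2)$, so after the faithfully flat base change $\spec\ringcoeff[a]\to\spec\ringcoeff[a^2]$ one gets $\Stab_{\ld G}(\kappa)=\ker(\mathrm{prod})$ over $\GG_m\subseteq\sl_2^\ast\mmod\SL_2$. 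Since both $\Stab_{\ld G}(\kappa)$ and $\ker(\mathrm{prod})$ are flat, hence $a^2$-torsion-free, over $\spec\ringcoeff[a^2]$ and the closed immersion between them is an isomorphism over $\GG_m$, its ideal is torsion and therefore zero, so the two group schemes agree everywhere. Finally $\ker(\mathrm{prod})\cong\ld J\times_{\sl_2^\ast\mmod\SL_2}\ld J$ via $(g_1,g_2,g_3)\mapsto(g_1,g_2)$, since $\mathrm{prod}$ is a split surjection of commutative group schemes.

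\textbf{Main obstacle.}
The substantive point is Step 3: checking that the stabilizer equations for $\kappa(a^2)$ single out precisely $\ker(\mathrm{prod})$ inside $\ld J^{\times 3}$. The two delicate ingredients are getting the conventions straight so that $\ld J$ acts on $\std$ through $\begin{psmallmatrix} u & a^2v\\ v & u\end{psmallmatrix}$ (the transpose of the naive centralizer, reflecting coadjoint versus adjoint), and controlling the fiber over $a^2=0$, where the eigenbasis degenerates; the torsion-freeness argument is exactly what lets one settle the latter without expanding all eight stabilizer equations by hand.
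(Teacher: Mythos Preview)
Your approach is essentially the paper's: identify the fiber product as the stabilizer of the Kostant section, transport into $\ld{J}^{\times 3}$ via the moment map, diagonalize over the locus $a^2\neq 0$ to pin down $\ker(\mathrm{prod})$, and then pass to the special fiber. Your treatment of the last step --- using miracle flatness to get $a^2$-torsion-freeness and then spreading the equality from $\GG_m$ to all of $\AA^1$ --- is more careful than the paper's one-line ``limiting argument for $a\to 0$'', and your eigenbasis computation $\kappa(a^2)=\tfrac{1}{2a}(f_1^{\otimes 3}+f_2^{\otimes 3})$ is exactly the paper's diagonalization by an explicit $\vec g\in\GL_2^{\times 3}$, repackaged; the one point to tidy is your justification for the transposed form $\begin{psmallmatrix}u & a^2 v\\ v & u\end{psmallmatrix}$, which is correct but not for a ``coadjoint versus adjoint'' reason --- it is because the first $\SL_2$ acts on the quadratic form $q_1$ by $q_1\mapsto q_1\circ g^T$ in Bhargava's slicing conventions (cf.\ \cref{def: cube and quadratic forms}), so the relevant orthogonal group of $q_1=a^2x^2-y^2$ is indeed your transposed copy of $\ld J$.
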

\begin{proof}
    The fiber product on the left identifies with the subgroup of $\sl_2^\ast\mmod \SL_2 \times \ld{G}$ of those $(a^2,\vec{g})$ such that $\vec{g} = (g_1, g_2, g_3)\in \SL_2^{\times 3}$ stabilizes $\kappa(a^2)$. The trick to determining this stabilizer is to use Bhargava's construction from \cref{def: cube and quadratic forms}: if $\vec{g}$ stabilizes a cube $\cC$, it must also stabilize the corresponding triple $\mu(\cC)\in \sl_2^{\ast, \times 3}$ of quadratic forms. 
    
    First, a simple calculation shows that if $a$ is a unit, the triple of matrices
    $$\vec{g} = \left(
    \begin{pmatrix}
        -1 & a^{-1}\\
        a & 1
    \end{pmatrix}, 
    \begin{pmatrix}
        -1 & a^{-1}\\
        a & 1
    \end{pmatrix}, 
    \begin{pmatrix}
        -1 & a^{-1}\\
        a & 1
    \end{pmatrix}
    \right) \in \SL_2^{\times 3}$$
    sends 
    $$\kappa(a^2) \mapsto -4 (a^2, \vec{0}, a^{-1}, \vec{0}).$$
    The triple $\vec{g}$ can be thought of as ``diagonalizing'' $\kappa(a^2)$.
    The stabilizer of the cube $-4 (a^2, \vec{0}, a^{-1}, \vec{0})$ precisely consists of triples of matrices of the form
    \begin{equation}\label{eq: diag stab}
        \left(\begin{pmatrix}
            \alpha_1 & 0\\
            0 & \alpha_1^{-1}
        \end{pmatrix}, \begin{pmatrix}
            \alpha_2 & 0\\
            0 & \alpha_2^{-1}
        \end{pmatrix}, \begin{pmatrix}
            \alpha_3 & 0\\
            0 & \alpha_3^{-1}
        \end{pmatrix}
        \right) \text{ with }\alpha_1\alpha_2\alpha_3 = 1.
    \end{equation}
    For $\alpha \in \GG_m$, let $h(\alpha)$ denote the matrix
    $$h(\alpha) = \frac{1}{2} \begin{pmatrix}
        \alpha + \alpha^{-1} & \frac{\alpha^{-1} - \alpha}{a} \\
        a^2\cdot \frac{\alpha^{-1} - \alpha}{a} & \alpha + \alpha^{-1}
    \end{pmatrix} \in \SL_2.$$
    Conjugating \cref{eq: diag stab} by the element $\vec{g} \in \ld{G}$, we find that the triple $(h(\alpha_1), h(\alpha_2), h(\alpha_3))$ of matrices stabilizes $\kappa(a^2)$ as long as $\alpha_1\alpha_2\alpha_3 = 1$ and $a^2\in \GG_m \subseteq \AA^1$. (See \cite[Section 3.2]{bfm} for a slight variant of this calculation.) 
    Note that the subgroup of such triples is $2$-dimensional, and therefore the associated homogeneous $\ld{G}$-space is $9 - 2 = 7$-dimensional. Using that the $\ld{G}$-orbit of $\kappa(a^2)$ is also $7$-dimensional (e.g., by \cite[Example 4.5 in Chapter 14]{gelfand-hyperdet}), it is not hard to see from this calculation (by a limiting argument for $a \to 0$) that the stabilizer of the family $\kappa(\AA^1) \subseteq \std^{\otimes 3}$ is precisely the claimed group scheme.
\end{proof}
\begin{remark}
    Using \cite[Corollary 3.6.20]{ku-rel-langlands} with $H = \SO_3$ and $G = \PSO_4$, we find the following consequence of \cref{prop: stab of kostant}. Choose one of the factors $\SL_2 \subseteq \SL_2^{\times 3}$, let $\GG_a\subseteq \SL_2$ denote the subgroup of strictly upper-triangular matrices, and let $\psi: \GG_a \to \GG_a$ denote the identity character. Then, there is an isomorphism
    $$(\std^{\otimes 3})^\reg/_\psi \GG_a \cong T^\ast(\SL_2)^\reg$$
    of $\SL_2 \times \SL_2$-varieties.
\end{remark}

\section{The proof, and some remarks}

Before proceeding, let us remind the reader of the definition of the left-hand side of the equivalence of \cref{thm: main}, following \cite[Definition 3.6.1]{ku-rel-langlands}.
\begin{definition}\label{def: Shv-Sat LG/H}
    Let $G$ be a complex reductive group, and let $H\subseteq G$ be a closed subgroup.
    Let $\Shv^{c}_{G\pw{t}}(G\ls{t}/H\ls{t}; \QQ)$ denote the $\infty$-category of $G\pw{t}$-equivariant sheaves of $\QQ$-modules on $G\ls{t}/H\ls{t}$ which are constructible for the orbit stratification on $G\ls{t}/H\ls{t}$.
    %Note that since the orbit stratification is countable (by assumption that $H \subseteq G$ is a spherical subgroup and \cite[Theorem 3.2.1]{gaitsgory-nadler}), the $\infty$-category $\Shv^{c}_{G\pw{t}}(G\ls{t}/H\ls{t}; \QQ)$ is well-behaved.
    There is a natural left-action of the $\E{3}$-monoidal $\infty$-category $\Shv^c_{(G\times G)\pw{t}}(G\ls{t}; \QQ)$ on $\Shv^{c}_{G\pw{t}}(G\ls{t}/H\ls{t}; \QQ)$, and in particular, a left-action of $\Rep(\ld{G})$ by the abelian geometric Satake theorem of \cite{mirkovic-vilonen}.
    Let 
    $$\IC_0\in \Shv^{c}_{G\pw{t}}(G\ls{t}/H\ls{t}; \QQ)$$
    denote the pushforward $i_! \ul{\QQ}$ of the constant sheaf along the inclusion $(G/H)(\cc\pw{t}) \to (G/H)(\cc\ls{t})$.
    Let 
    $$\Shv^{c,\Sat}_{G\pw{t}}(G\ls{t}/H\ls{t}; \QQ) \subseteq \Shv^{c}_{G\pw{t}}(G\ls{t}/H\ls{t}; \QQ)$$
    denote the full subcategory generated by $\IC_0$ under the action of $\Rep(\ld{G})$. If $\ringcoeff$ is any $\QQ$-algebra, base-changing along the unit map defines the $\infty$-category $\Shv^{c,\Sat}_{G\pw{t}}(G\ls{t}/H\ls{t}; \ringcoeff)$.
\end{definition}
\begin{proof}[Proof of \cref{thm: main}]
    It suffices to verify conditions (a) and (b) of \cite[Theorem 3.6.4]{ku-rel-langlands}, which gives a criterion for establishing an equivalence of $\ringcoeff$-linear $\infty$-categories of the form
    $$\Shv^{c,\Sat}_{G\pw{t}}(G\ls{t}/H\ls{t}; \ringcoeff) \simeq \Perf(\sh^{1/2} \ld{M}/\ld{G}).$$
    The map $\kappa$ is given by the map $\sl_2^\ast(2)\mmod \SL_2 \to \std^{\otimes 3}(4,\vec{2},-2,\vec{0})$ from \cref{cstr: kostant}. 
    %First, the condition of the placidity of the $G(\cc\pw{t})$-action on $G(\cc\ls{t})/H(\cc\ls{t})$ follows from the placidity of the $(G \times G)(\cc\pw{t})$-action on $G(\cc\ls{t})$ via the Schubert stratification of the affine Grassmannian.
    For condition (a) of \cite[Theorem 3.6.4]{ku-rel-langlands}, we need to show that if $\ld{J}_X = \sl_2^\ast(2)\mmod \SL_2 \times_{\std^{\otimes 3}(4,\vec{2},-2,\vec{0})/\ld{G}} \sl_2^\ast(2)\mmod \SL_2$, the ring of regular functions on the quotient $(\sl_2^\ast(2)\mmod \SL_2 \times \ld{G})/\ld{J}_X$ is isomorphic (as a graded algebra) to $\co_{\std^{\otimes 3}(4,\vec{2},-2,\vec{0})}$. The quotient $(\sl_2^\ast(2)\mmod \SL_2 \times \ld{G})/\ld{J}_X$ identifies with the $\ld{G}$-orbit of the image of $\kappa$, which has complement of codimension $3$ in $\std^{\otimes 3}$ by \cref{prop: G-orbit kostant codim}; therefore, the algebraic Hartogs theorem implies that there is a graded isomorphism $\co_{(\sl_2^\ast(2)\mmod \SL_2 \times \ld{G})/\ld{J}_X} \cong \co_{\std^{\otimes 3}(4,\vec{2},-2,\vec{0})}$.

    For condition (b) of \cite[Theorem 3.6.4]{ku-rel-langlands}, we need to check that there is an isomorphism
    $$\ld{J}_X \cong \spec \H^{\PGL_2}_\ast(\Omega(\PGL_2^{\times 3}/\PGL_2^\mathrm{diag}); \ringcoeff)$$
    of graded group schemes over $\sl_2^\ast(2)\mmod \SL_2 \cong \spec \H^\ast_{\PGL_2}(\ast; \ringcoeff)$. There is an isomorphism 
    \begin{equation}\label{eq: reg centr PGL2}
        \spec \H^{\PGL_2}_\ast(\Omega \PGL_2; \ringcoeff) \cong \spec \ringcoeff[a, \alpha^{\pm 1}, \tfrac{\alpha - \alpha^{-1}}{a}]^{\Z/2} \cong \ld{J},
    \end{equation}
    and the action of the $\Z/2$ on the middle term sends $a\mapsto -a$ and $\alpha \mapsto \alpha^{-1}$. This is proved, e.g., in \cite{bfm}. (As in \cref{prop: stab of kostant}, $\ld{J}$ denotes the group scheme over $\sl_2^\ast\mmod \SL_2$ of regular centralizers for $\SL_2$.) The K\"unneth theorem implies that there is an isomorphism 
    $$\spec \H^{\PGL_2}_\ast(\Omega (\PGL_2^{\times 3}); \ringcoeff) \cong \spec \ringcoeff[a, \alpha_i^{\pm 1}, \tfrac{\alpha_i - \alpha_i^{-1}}{a} | 1\leq i \leq 3]^{\Z/2},$$
    and the fiber sequence
    $$\PGL_2^\mathrm{diag} \to \PGL_2^{\times 3} \to \PGL_2^{\times 3}/\PGL_2^\mathrm{diag}$$
    implies that
    $$\spec \H^{\PGL_2}_\ast(\Omega (\PGL_2^{\times 3}/\PGL_2^\mathrm{diag}); \ringcoeff) \cong \ker(\ld{J} \times_{\sl_2^\ast\mmod \SL_2} \ld{J} \times_{\sl_2^\ast\mmod \SL_2} \ld{J} \xar{\mathrm{prod}} \ld{J}).$$
    %\spec \left(\ringcoeff[a, \alpha_i^{\pm 1}, \tfrac{\alpha_i - \alpha_i^{-1}}{a} | 1\leq i \leq 3]/(\alpha_1\alpha_2\alpha_3 - 1)\right)^{\Z/2},$$
    The desired isomorphism now follows from this observation and \cref{prop: stab of kostant}.
\end{proof}
\begin{remark}
    Let $\ringcoeff[\hbar] = \H^{\ast}_{S^1_\rot}(\ast; \ringcoeff)$, so that $\hbar$ lives in weight $-2$.
    Let $\cd_\hbar(\std^{\ast, \otimes 3})$ denote the Weyl algebra of $\std^{\otimes 3}$, so that it is generated over $\ringcoeff[\hbar]$ by $\Sym_{\ringcoeff}(\std^{\ast,\otimes 3})$, where the commutation relation is
    $$[v_1 \otimes w_1 \otimes u_1, v_2 \otimes w_2 \otimes u_2] = \hbar \pdb{v_1, v_2} \pdb{w_1, w_2} \pdb{u_1, u_2}.$$
    The algebra $\cd_\hbar(\std^{\ast, \otimes 3})$ acquires a grading coming from the graded symplectic vector space $\std^{\otimes 3}(4, \vec{2}, -2, \vec{0})$, and it can be characterized as the unique graded $\ringcoeff[\hbar]$-algebra such that the induced Poisson bracket on $\cd_\hbar(\std^{\ast, \otimes 3}) \otimes_{\ringcoeff[\hbar]} \ringcoeff$ is the one coming from the symplectic form on $\std^{\otimes 3}$. Using this fact, one can show that there is a $\ringcoeff[\hbar]$-linear equivalence
    $$\Shv_{\PGL_2^{\times 3}\pw{t} \rtimes \GG_m^\rot}^{c,\Sat}(\PGL_2^{\times 3}\ls{t}/\PGL_2^\mathrm{diag}\ls{t}; \ringcoeff) \simeq \sh^{1/2}\left( \cd_\hbar(\std^{\ast, \otimes 3})\modc^{\SL_2(-2\rho)^{\times 3}} \right)$$
    which extends \cref{thm: main}.
\end{remark}
\begin{remark}
    Specializing \cite[Remark 3.6.12]{ku-rel-langlands} to the present case, one can argue as in \cite[Theorem 12.5.3]{arinkin-gaitsgory-singsupp} to show that an object of $\Shv_{\PGL_2^{\times 3}\pw{t}}^{c,\Sat}(\PGL_2^{\times 3}\ls{t}/\PGL_2^\mathrm{diag}\ls{t}; \ringcoeff)$ is compact if and only if its image under the equivalence of \cref{thm: main} is set-theoretically supported on the vanishing locus of the Cayley hyperdeterminant.
\end{remark}
\begin{remark}
    We have already verified most of \cite[Conjecture 3.5.11]{ku-rel-langlands} in the case of the spherical $\PGL_2^{\times 3}$-variety $\PGL_2^{\times 3}/\PGL_2^\mathrm{diag}$. It remains to check the final part, which is the same as \cite[Conjecture 1.1.1]{finkelberg-ginzburg-travkin}. This states that if $B\subseteq \PGL_2$ is a Borel subgroup, the set of $B^{\times 3}$-orbit closures on $\PGL_2^{\times 3}/\PGL_2^\mathrm{diag}$ agrees with the set of irreducible components of $\std^{\otimes 3} \times_{\ld{\fr{b}}^{\ast, \times 3}} \{0\}$. (We will not check that this bijection is equivariant for the Weyl group.) The set of $B^{\times 3}$-orbits on $\PGL_2^{\times 3}/\PGL_2^\mathrm{diag}$ can be identified with the set of $\PGL_2$-orbits on $\PGL_2^{\times 3}/B^{\times 3} \cong (\PP^1)^{\times 3}$. Direct computation verifies that there are five such orbits. 

    On the other hand, the formulas of \cref{def: cube and quadratic forms} show that the fiber product $\std^{\otimes 3} \times_{\ld{\fr{b}}^{\ast, \times 3}} \{0\}$ consists of those cubes $(a, \vec{b}, c, \vec{d})$ such that
    \begin{align*}
        a d_1 & = b_2 b_3, \\
        a d_2 & = b_1 b_3, \\
        a d_3 & = b_2 b_1, \\
        ac + b_1 d_1 & = b_2 d_2 + b_3 d_3, \\
        ac + b_2 d_2 & = b_1 d_1 + b_3 d_3, \\
        ac + b_3 d_3 & = b_1 d_1 + b_2 d_2.
    \end{align*}
    The primary decomposition of the ideal in $\co_{\std^{\otimes 3}}$ cut out by these equations is given by the intersection of the following five prime ideals:
    \begin{align*}
        (a, b_1, b_3, d_2), & (a, b_1, b_2, b_3), \\
        (a, b_1, b_2, d_3), & (a, b_2, b_3, d_1), \\
        (b_2 d_2 - b_3 d_3, b_1 d_1 - b_3 d_3, b_3 c - d_1 d_2, & b_2 c - d_1 d_3, b_1 c - d_2 d_3, a c - b_3 d_3, \\
        b_2 b_3 - a d_1, b_1 b_3 - a d_2, & b_1 b_2 - a d_3, a d_1 d_2 - b_3^2 d_3).
    \end{align*}
    In particular, the fiber product $\std^{\otimes 3} \times_{\ld{\fr{b}}^{\ast, \times 3}} \{0\}$ has five irreducible components, as desired.
\end{remark}
\begin{remark}
    The proof of \cref{thm: G2 main} is exactly the same as the proof of \cref{thm: main} above. Indeed, one only needs to observe the following (which we will prove momentarily).
    \begin{lemma}\label{lem: pso8 mod g2}
        There is a homotopy equivalence $\PSO_8/\G_2 \simeq \RP^7 \times \RP^7$.
    \end{lemma}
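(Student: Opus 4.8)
\emph{Proof plan.} The strategy is to realize both sides as homogeneous spaces for $\PSO_8=\SO_8/\{\pm 1\}=\Spin_8/Z$, where $Z=Z(\Spin_8)\cong(\Z/2)^2$. Recall that, up to conjugacy, the triality copy of $\G_2$ inside $\Spin_8$ is the common stabilizer of a unit vector $v$ in the vector representation $V$ and a unit half-spinor $s$ in a half-spin representation $S^+$ (both $8$-dimensional, each carrying an invariant inner product). Indeed, $\Spin_8$ acts transitively on the unit sphere $S(V)\cong S^7$ with stabilizer $\Spin_7$; the restriction $S^+|_{\Spin_7}$ is the $8$-dimensional spin representation of $\Spin_7$; and $\Spin_7$ acts transitively on its unit sphere $S^7$ with stabilizer $\G_2$ (the classical octonionic description of $\G_2$). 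Hence $\Spin_8$ acts transitively on $S(V)\times S(S^+)\cong S^7\times S^7$, with the stabilizer of $(v,s)$ equal to $\Spin_8^v\cap\Spin_8^s=\G_2$. In particular $\PSO_8$ acts transitively on $\RP(V)\times\RP(S^+)\cong\RP^7\times\RP^7$.

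It then remains to identify the $\PSO_8$-stabilizer of $([v],[s])$ with $\G_2$. Let $N\subseteq\Spin_8$ be the stabilizer of $([v],[s])$, i.e. $N=\{g : g\,\RR v=\RR v,\ g\,\RR s=\RR s\}$, so that the $\PSO_8$-stabilizer is $N/Z$. Clearly $Z\subseteq N$ and $\G_2\subseteq N$. The three nontrivial elements of $Z$ act on the pair of lines $(\RR v,\RR s)$ by the three nontrivial sign patterns in $\{\pm 1\}^2$: the generator of $\ker(\Spin_8\to\mathrm{GL}(V))$ acts trivially on $V$ and by $-1$ on $S^+$; the generator of $\ker(\Spin_8\to\mathrm{GL}(S^+))$ acts by $-1$ on $V$ and trivially on $S^+$; their product acts by $-1$ on both. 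So the homomorphism $Z\to\{\pm1\}^2$ recording these signs is surjective, and given $g\in N$ we may pick $\zeta\in Z$ with $\zeta^{-1}g$ fixing both $v$ and $s$, whence $\zeta^{-1}g\in\Spin_8^v\cap\Spin_8^s=\G_2$. Thus $N=\G_2\cdot Z$. Since any element of $\Spin_8$ that is central in $\Spin_8$ and lies in $\G_2$ is central in $\G_2$, we have $\G_2\cap Z\subseteq Z(\G_2)=\{1\}$, so $N/Z\cong\G_2/(\G_2\cap Z)\cong\G_2$, which is precisely the image of $\G_2\subseteq\Spin_8$ in $\PSO_8$. The orbit map therefore gives a diffeomorphism $\PSO_8/\G_2\cong\RP(V)\times\RP(S^+)\cong\RP^7\times\RP^7$, in particular a homotopy equivalence.

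The step requiring the most care is the first one: pinning down that the triality embedding of $\G_2$ is the joint stabilizer of a vector and a half-spinor, which rests on the classical transitivity of $\Spin_7$ on the unit sphere of its spin representation with stabilizer $\G_2$ (together with the branching $S^+|_{\Spin_7}\cong\text{spin rep of }\Spin_7$). Everything else is bookkeeping with the order-$4$ center of $\Spin_8$ and the three $8$-dimensional representations $V,S^+,S^-$ that triality permutes. (Alternatively one can first identify $\Spin_8/\G_2\cong S^7\times S^7$ directly by orbit–stabilizer, check that $Z$ acts on the two sphere factors by $(\mathrm{id},-\mathrm{id})$, $(-\mathrm{id},\mathrm{id})$, $(-\mathrm{id},-\mathrm{id})$, hence freely, and divide; the bookkeeping is the same.)
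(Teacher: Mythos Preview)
Your proof is correct and takes essentially the same approach as the paper: both identify $\Spin_8/\G_2$ with $S^7\times S^7$ by exhibiting $\G_2$ as the intersection of two copies of $\Spin_7$ inside $\Spin_8$, then pass to $\PSO_8$. The paper uses the octonionic triality presentation of $\Spin_8$ as triples $(A_1,A_2,A_3)\in\SO_8^{\times 3}$ with $A_1(x)A_2(y)=A_3(xy)$, while you use the equivalent representation-theoretic description via the vector and half-spin representations; under triality these are the same two $\Spin_7$'s. Your argument is in fact more explicit about the passage from $\Spin_8$ to $\PSO_8$: the paper simply asserts that the $\PSO_8$ statement follows from $\Spin_8/\G_2\simeq S^7\times S^7$, whereas you carefully verify that the center $Z\cong(\Z/2)^2$ acts on the two sphere factors by the three nontrivial sign patterns, so that $N/Z\cong\G_2$ and the quotient is $\RP^7\times\RP^7$.
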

    The replacement of \cref{eq: reg centr PGL2} is given by \cite[Proposition 4.8.6]{ku-rel-langlands}, which gives an isomorphism
    $$\spec \H^{\G_2}_\ast(\Omega \RP^7; \ringcoeff) \cong \spec \ringcoeff[a, b, \alpha^{\pm 1}, \tfrac{\alpha - \alpha^{-1}}{a}]^{\Z/2}$$
    where $a$ is in weight $-6$ and $b$ is in weight $-4$; the argument of \cref{thm: main} then proves \cref{thm: G2 main}.
    \begin{proof}[Proof of \cref{lem: pso8 mod g2}]
        This follows from the claim that $\Spin_8/\G_2 \simeq S^7 \times S^7$.
        Perhaps the most ``conceptual'' way to see this is as follows. Using triality, one can identify $\Spin_8$ with the subgroup of $\SO_8^{\times 3}$ of those triples $(A_1, A_2, A_3)$ such that $A_1(x_1) A_2(x_2) = A_3(x_1 x_2)$ for octonions $x_1,x_2$. Under this presentation, $\G_2$ corresponds to the subgroup where $A_1 = A_2 = A_3$. The subgroups where $A_1 = A_3$ (resp. $A_2 = A_3$) are both isomorphic to $\Spin_7$; these are sometimes denoted $\Spin^\pm_7$. The action of $\Spin_8$ on $S^7 \times S^7$ sends $(x,y) \mapsto (A_1 x, A_2 y)$; one can check that this is transitive, and that the stabilizer of the point $(1,1)$ is precisely $\Spin^+_7 \cap \Spin^-_7 \cong \G_2$.
    
        That there is an equivalence $\Spin_8/\G_2 \simeq S^7 \times S^7$ at the level of cohomology with $\Z[1/2]$-coefficients and mod $2$ coefficients, at least, is much simpler. On cohomology with $\Z[1/2]$-coefficients, the map $\G_2 \to \Spin_8$ is given by the map $\Z[1/2, p_1, p_2, p_3, c_4] \to \Z[1/2, c_2, c_6]$ sending
        $$p_1 \mapsto 2c_2, \ p_2 \mapsto c_2^2, \ p_3 \mapsto c_6, \ c_4 \mapsto 0.$$
        Indeed, the map $\G_2 \to \Spin_8$ induces a map on maximal tori, whose effect on cohomology is the map $\Z[1/2, x_1, x_2, x_3, x_4] \to \Z[1/2, y_1, y_2]$ sending 
        $$x_1\mapsto 0, \ x_2 \mapsto y_1, \ x_3\mapsto y_2, \ x_4\mapsto -(y_1 + y_2).$$
        Expressing $p_j$ as the $j$th elementary symmetric polynomial in $x_1^2, \cdots, x_4^2$ and $c_4 = x_1x_2x_3x_4$, and using that $c_2 = y_1^2 + y_2^2 + y_1 y_2$ and $c_6 = y_1^2 y_2^2 (y_1+y_2)^2$ gives the desired claim.
        The Serre spectral sequence for the fibration $\Spin_8/\G_2 \to B\G_2 \to B\Spin_8$ implies that 
        $$\H^\ast(\Spin_8/\G_2; \Z[1/2]) \cong \Z[1/2, \sigma(4p_2 - p_1^2), \sigma(c_4)]/(\sigma(4p_2 - p_1^2)^2, \sigma(c_4)^2),$$
        where $\sigma(4p_2 - p_1^2)$ and $\sigma(c_4)$ both live in (homological) weight $-7$. This is precisely the cohomology of $S^7 \times S^7$, as desired.

        The story in mod $2$ cohomology is similar. Namely, \cite{quillen-mod-2-coh-spinor} tells us that $\H^\ast_{\Spin_8}(\ast; \FF_2)$ is isomorphic to the polynomial algebra $\FF_2[w_4, w_6, w_7, w_8, \epsilon]$, where $\epsilon$ lives in cohomological degree $8$; the map $\H^\ast_{\Spin_8}(\ast; \FF_2) \to \H^\ast_{\G_2}(\ast; \FF_2) \cong \FF_2[w_4, w_6, w_7]$ sends $w_8,\epsilon\mapsto 0$. It follows that 
        $$\H^\ast(\Spin_8/\G_2; \FF_2) \cong \FF_2[\sigma(w_8), \sigma(\epsilon)]/(\sigma(w_8)^2, \sigma(\epsilon)^2),$$
        where $\sigma(w_8)$ and $\sigma(\epsilon)$ both live in (homological) weight $-7$. This is again the cohomology of $S^7 \times S^7$, as desired.
    \end{proof}
\end{remark}
\begin{remark}
    The $\infty$-category $\Shv_{\PGL_2^{\times 3}\pw{t}}^{c,\Sat}(\PGL_2^{\times 3}\ls{t}/\PGL_2^\mathrm{diag}\ls{t}; \ringcoeff)$ admits a natural action of the symmetric group $\Sigma_3$. Under the equivalence of \cref{thm: main}, this corresponds to the $\Sigma_3$-action on $\std^{\otimes 3}/\SL_2^{\times 3}$ which permutes the tensor factors. As explained in \cite[Remark 3.6.7]{ku-rel-langlands}, this $\Sigma_3$-action can be understood as an analogue of the Gelfand-Graev action (which, for connected semisimple $G$, gives an action of the Weyl group of $G$ on the affine closure of $T^\ast(G/N)$).
\end{remark}
\begin{remark}
    \cref{rmk: sympl on C2 cubed} guarantees that the equivalence of \cref{thm: main} is compatible with the action of the spherical Hecke category 
    $$\Shv_{(\PGL_2^{\times 3} \times \PGL_2^{\times 3})\pw{t}}^{c,\Sat}(\PGL_2^{\times 3}\ls{t}; \ringcoeff) \simeq \Perf^\sh(\sl_2^{\ast, \times 3}(2-2\rho)/\SL_2^{\times 3}(-2\rho)).$$
    Namely, there is a commutative diagram
    $$\hspace*{-0.5cm}\xymatrix{
    \Shv_{(\PGL_2^{\times 3} \times \PGL_2^{\times 3})\pw{t}}^{c,\Sat}(\PGL_2^{\times 3}\ls{t}; \ringcoeff) \ar[d]^-{\text{act on }\IC_0} \ar[rr]^-\sim_-{\text{\cite{bf-derived-satake}}} & & \Perf^\sh(\sl_2^{\ast, \times 3}(2-2\rho)/\SL_2^{\times 3}(-2\rho)) \ar[d]^-{\mu^\ast} \\
    \Shv_{\PGL_2^{\times 3}\pw{t}}^{c,\Sat}(\PGL_2^{\times 3}\ls{t}/\PGL_2^\mathrm{diag}\ls{t}; \ringcoeff) \ar[rr]^-\sim_-{\text{\cref{thm: main}}} & & \Perf^\sh(\std^{\otimes 3}(4,\vec{2},-2,\vec{0})/\SL_2^{\times 3}(-2\rho)),
    }$$
    where $\mu^\ast$ is given by pullback along the moment map for the Hamiltonian $\SL_2^{\times 3}$-action on $\std^{\otimes 3}$.

    Let us also note that taking cohomology (i.e., pushforward to a point) defines a functor
    $$\Shv_{\PGL_2^{\times 3}\pw{t}}^{c,\Sat}(\PGL_2^{\times 3}\ls{t}/\PGL_2^\mathrm{diag}\ls{t}; \ringcoeff) \to \Shv_{\PGL_2^{\times 3}\pw{t}}(\ast; \ringcoeff),$$
    which, as discussed in \cite[Remark 3.5.10]{ku-rel-langlands}, factors through the functor $\Shv_{\PGL_2^{\mathrm{diag}}\pw{t}}(\ast; \ringcoeff) \to \Shv_{\PGL_2^{\times 3}\pw{t}}(\ast; \ringcoeff)$. Under \cref{thm: main} and the equivalence $\Shv_{\PGL_2^{\mathrm{diag}}\pw{t}}(\ast; \ringcoeff) \simeq \Perf^\sh(\sl_2^\ast(2)\mmod \SL_2)$, there is a commutative diagram
    $$\hspace*{-0.5cm}
    \xymatrix{
    \Shv_{\PGL_2^{\times 3}\pw{t}}^{c,\Sat}(\PGL_2^{\times 3}\ls{t}/\PGL_2^\mathrm{diag}\ls{t}; \ringcoeff) \ar[d]_-{\text{cohomology}} \ar[rr]^-\sim_{\text{\cref{thm: main}}} & & \Perf^\sh(\std^{\otimes 3}(4,\vec{2},-2,\vec{0})/\SL_2^{\times 3}(-2\rho)) \ar[d]^-{\kappa^\ast} \\
    \Shv_{\PGL_2^{\mathrm{diag}}\pw{t}}(\ast; \ringcoeff) \ar[rr]_-\sim & & \Perf^\sh(\sl_2^\ast(2)\mmod \SL_2),
    }$$
    where $\kappa$ is the Kostant slice of \cref{cstr: kostant}.
\end{remark}
\begin{remark}
    \cref{thm: main} does not need the full strength of optimality in the sense \cite[Hypothesis 3.5.2]{ku-rel-langlands}. Indeed, the first and second assumptions in \cite[Hypothesis 3.5.2]{ku-rel-langlands} are included to ensure formality of the algebra from \cite[Equation 16 in the proof of Theorem 3.6.4]{ku-rel-langlands}. However, as in \cite[Remark 3.2.22]{ku-rel-langlands}, the formality of this algebra is \textit{guaranteed} in our case: since \cref{thm: main} shows that the homotopy of the algebra in question is $\co_{\std^{\otimes 3}(4,\vec{2},-2,\vec{0})}$, i.e., is polynomial on classes in even weights. This algebra admits an $\E{3}$-structure (essentially from factorization; see, e.g., \cite[Proposition 16.1.4]{bzsv}), and is therefore automatically formal by \cite[Lemma 2.1.9]{ku-rel-langlands}. Note, however, that since $\Ind^{\Spin_8}_{\SL_2^{\times 3}}(\std^{\otimes 3} \oplus \AA^1)$ is not an affine space, this argument does not go through in the case of \cref{thm: G2 main} to prove formality of the algebra from \cite[Equation 16 in the proof of Theorem 3.6.4]{ku-rel-langlands}.
\end{remark}

\begin{remark}\label{rmk: so8 minimal nilpotent}
    As in the introduction, let $\ol{\co} := \ol{\co_\min}(\fr{so}_8)$ denote the closure of the minimal nilpotent coadjoint orbit of $\fr{so}_8$. This is a $10$-dimensional scheme with a canonical action of $\SO_8$, and hence a canonical action of $\SL_2^{\times 4} \subseteq \SO_8$.
    By \cite[Proposition 3.18]{jia-affine-closure}, one can identify $\ol{\co}$ with the Hamiltonian reduction at zero of the symplectic vector space $(\AA^2)^{\otimes 3} \otimes \AA^2$ with respect to the Hamiltonian $\SL_2$-action coming from the final factor. That is, if $\mu: (\AA^2)^{\otimes 3} \otimes \AA^2 \to \sl_2^\ast$ denotes the moment map for this Hamiltonian $\SL_2$-action, then 
    $$\mu^{-1}(0)/\SL_2 \cong \ol{\co}$$
    over $\cc$. Another way of saying this is that $\ol{\co}$ is the quotient of $(\AA^2)^{\otimes 3} \otimes \AA^2 \cong (\AA^2)^{\otimes 3} \oplus (\AA^2)^{\otimes 3}$ by the action of the symplectic groupoid $T^\ast \SL_2$.
    
    Using this description of $\ol{\co}$ and our calculations in \cref{prop: stab of kostant}, it is not difficult to show that there is an isomorphism
    $$\ol{\co} \cong \ol{(\SL_2^{\times 4} \times \sl_2^\ast\mmod \SL_2)/\ld{J}_{0,4}},$$
    where the overline on the right-hand side denotes the affine closure, and $\ld{J}_{0,4}$ is the closed subgroup scheme of $\SL_2^{\times 4} \times \sl_2^\ast\mmod \SL_2$ defined by
    $$\ld{J}_{0,4} = \ker(\ld{J} \times_{\sl_2^\ast\mmod \SL_2} \ld{J} \times_{\sl_2^\ast\mmod \SL_2} \ld{J} \times_{\sl_2^\ast\mmod \SL_2} \ld{J} \xar{\mathrm{prod}} \ld{J}).$$
    In particular, by arguing as in \cref{thm: main}, it follows that if the $\PGL_2^{\times 4}\pw{t}$-action on $\PGL_2^{\times 4}\ls{t}/\PGL_2^\mathrm{diag}\ls{t}$ is optimal in the sense of \cite[Hypothesis 3.5.2]{ku-rel-langlands}, there is an equivalence
    $$\Shv_{\PGL_2^{\times 4}\pw{t}}^{c,\Sat}(\PGL_2^{\times 4}\ls{t}/\PGL_2^\mathrm{diag}\ls{t}); \cc) \simeq \Perf^\sh(\ol{\co}/\SL_2^{\times 4})$$
    where the shearing is taken with respect to a certain grading on the stack $\ol{\co}/\SL_2^{\times 4}$.
\end{remark}

\begin{remark}
    There is also an analogue of \cref{prop: stab of kostant} for the $\SL_2 \times \SL_3 \times \SL_3$-representation $\AA^2 \otimes \AA^3 \otimes \AA^3$, studied in \cite{bhargava-composition-ii}. Namely, it turns out that:
    \begin{itemize}
        \item The invariant quotient $(\AA^2 \otimes \AA^3 \otimes \AA^3)\mmod (\SL_2 \times \SL_3 \times \SL_3)$ is isomorphic to $\AA^1$. If one views $\AA^2 \otimes \AA^3 \otimes \AA^3$ as the space of pairs $(M, N)$ of $3\times 3$-matrices, the invariant quotient map is given by
        $$\AA^2 \otimes \AA^3 \otimes \AA^3 \to \AA^1, \ (M, N)\mapsto \Delta(\det(Mx - Ny)),$$
        where $\Delta$ denotes the discriminant of a binary cubic form.
        \item There is a map $\kappa: \AA^1 \to \AA^2 \otimes \AA^3 \otimes \AA^3$ which gives a section of the above invariant quotient map, and its $\SL_2 \times \SL_3 \times \SL_3$-orbit has complement of codimension $2$. Moreover, its stabilizer is given by the group scheme
        $$\AA^1 \times_{(\AA^2 \otimes \AA^3 \otimes \AA^3)/(\SL_2 \times \SL_3 \times \SL_3)} \AA^1 \cong \spec \left(\H^{\PGL_3}_\ast(\Omega \PGL_3; \ringcoeff) \otimes_{\ringcoeff[c_2, c_3]} \ringcoeff[c_3]\right)$$
        over $\AA^1 \cong \spec \ringcoeff[c_3]$.  Here, we have identified $\H^\ast_{\PGL_3}(\ast; \ringcoeff) \cong \ringcoeff[c_2, c_3]$, and the map $\ringcoeff[c_2, c_3] \to \ringcoeff[c_3]$ sends $c_2\mapsto 0$. This group scheme has relative dimension $2 = \rank(\PGL_3)$, and can be identified with the group of matrices of the form
        $$\left\{ \left. \begin{psmallmatrix}
        a & b & c\\
        cc_3 & a & b \\
        bc_3 & cc_3 & a
        \end{psmallmatrix} \right| a^3 + b^3 c_3 + c^3 c_3^2 - 3 a b c c_3 = 1 \right\} \subseteq \SL_3 \times \AA^1.$$
        In fact, one can more generally identify 
        \begin{multline*}
            \spec\left(\H^{\PGL_n}_\ast(\Omega \PGL_n; \ringcoeff) \otimes_{\ringcoeff[c_2, \cdots, c_n]} \ringcoeff[c_n]\right)
            \cong \ker(\Res_{\ringcoeff[c_n^{1/n}]/\ringcoeff[c_n]}(\GG_m) \xar{\mathrm{norm}} \GG_m) \subseteq \SL_n \times \AA^1.
        \end{multline*}
    \end{itemize}
    This is closely related to \cite[Theorem 2]{bhargava-composition-ii}. I hope to use the above observations to explore relationships to relative geometric Langlands in future work.
\end{remark}

\begin{remark}[Gan-Gross-Prasad]\label{rmk: ggp generalization}
    As mentioned in the introduction, \cref{thm: main} can be regarded as a special case of the geometrized analogue of the Gan-Gross-Prasad period (or at least a period isogenous to it), which describes transfer along $\SO_m \subseteq \SO_{m+1}$ when $m=3$.  In general, transfer along $\SO_m \subseteq \SO_{m+1}$ is described by the Hamiltonian variety which is dual, in the sense of \cite{bzsv}, to the spherical $\SO_m \times \SO_{m+1}$-variety $(\SO_m \times \SO_{m+1})/\SO_m^\mathrm{diag}$. 
    In \cite[Example 3.6.24]{ku-rel-langlands}, we described how when $m=2n$, one can obtain the Hamiltonian variety $\Hom(\std_{2j}, \std_{2n})$ which is dual to $(\SO_{2n} \times \SO_{2n+1})/\SO_{2n}^\mathrm{diag}$ via the regular centralizer group schemes $\ld{J}_{\SO_{2n}}$ and $\ld{J}_{\Sp_{2n}}$. Following the philosophy of \cite[Example 3.6.24]{ku-rel-langlands}, let us describe how to obtain the Hamiltonian variety $\Hom(\std_{2n}, \std_{2n-2})$ which is dual to $(\SO_{2n-1} \times \SO_{2n})/\SO_{2n-1}^\mathrm{diag}$ via the regular centralizer group scheme. When $n=2$, this gives an ``alternative approach'' to the results of this article (but it is really a rephrasing of the same argument used above).

    Namely, consider $H = \SO_{2n-1} \subseteq \SO_{2n} = G$, so that $\ld{H} = \Sp_{2n-2}$ and $\ld{G} = \SO_{2n}$. In this case, we claim that the Hamiltonian scheme $\ld{\cM}^\ddag$ from \cite[Construction 3.6.19]{ku-rel-langlands}, defined as the affine closure of $(\ld{G} \times \ld{H} \times \ld{\fr{h}}^\ast\mmod \ld{H})/(\ld{J}_{\ld{G}} \times_{\ld{\g}^\ast\mmod \ld{G}} \ld{\fr{h}}^\ast\mmod \ld{H})$, can be identified with $\Hom(\std_{2n}, \std_{2n-2})$. Again, as in \cite[Example 3.6.24]{ku-rel-langlands}, let us just describe the ``Kostant section'' $\kappa: \ld{\fr{h}}^\ast\mmod \ld{H} \to \Hom(\std_{2n}, \std_{2n-2})$. The desired map $\ld{\cM}^\ddag \to \Hom(\std_{2n}, \std_{2n-2})$ is then obtained using the $\ld{G} \times \ld{H}$-action:
    $$\xymatrix{
    \ld{G} \times \ld{H} \times \ld{\fr{h}}^\ast\mmod \ld{H} \ar[r]^-\kappa \ar[d] & \Hom(\std_{2n}, \std_{2n-2}); \\
    (\ld{G} \times \ld{H} \times \ld{\fr{h}}^\ast\mmod \ld{H})/(\ld{J}_{\ld{G}} \times_{\ld{\g}^\ast\mmod \ld{G}} \ld{\fr{h}}^\ast\mmod \ld{H}) \ar@{-->}[ur]_-\exists &
    }$$
    the dotted map factors through the affine closure of the source, and thereupon induces an isomorphism.

    Recall from \cite[Example 3.2.14]{ku-rel-langlands} that:
    \begin{itemize}
        \item We may identify 
        $$\ld{\fr{h}}^\ast\mmod \ld{H} \cong \spec \ringcoeff[p_1, \cdots, p_{n-1}] \cong \spec \H^\ast_{\SO_{2n-1}}(\ast; \ringcoeff),$$
        and the regular centralizer $\ld{J}_{\ld{H}}$ is the group scheme whose fiber over $\vec{p} := (p_1, \cdots, p_{n-1})$ is the subgroup of those units $f(t) \in \ringcoeff[t]/(t^{2n-2} + p_1 t^{2n-4} + \cdots + p_{n-1})$ such that $f(t)^{-1} = f(-t)$.
        Observe that $\ringcoeff[t]/(t^{2n-2} + p_1 t^{2n-4} + \cdots + p_{n-1})$ admits the structure of a symplectic vector space: the symplectic pairing sends 
        $$(f,g) \mapsto \text{coefficient of }t^{2n-3}\text{ in }f(t)g(-t).$$
        \item We may identify 
        $$\ld{\g}^\ast\mmod \ld{G} \cong \spec \ringcoeff[p_1, \cdots, p_{n-1}, c_n] \cong \spec \H^\ast_{\SO_{2n}}(\ast; \ringcoeff),$$
        and the regular centralizer $\ld{J}_{\ld{G}}$ is the group scheme whose fiber over $(\vec{p}, c_n) := (p_1, \cdots, p_{n-1}, c_n)$ is the subgroup of those units $f(t,v) \in \ringcoeff[t, v]/(tv-c_n, t^{2n-2} + p_1 t^{2n-4} + \cdots + p_{n-1} + v^2)$ such that $f(t,v)^{-1} = f(-t,-v)$. Observe that $\ringcoeff[t, v]/(tv-c_n, t^{2n-2} + p_1 t^{2n-4} + \cdots + p_{n-1} + v^2)$ admits the structure of a quadratic vector space: the associated symmetric bilinear form sends
        $$(f,g) \mapsto \text{coefficient of }t^{2n-2}\text{ in }f(t,v)g(-t,-v).$$
    \end{itemize}
    The map $\pi: \ld{\fr{h}}^\ast \mmod \ld{H} \to \ld{\g}^\ast\mmod \ld{G}$ is induced by the map
    $$\pi: \ringcoeff[p_1, \cdots, p_{n-1}, c_n] \twoheadrightarrow \ringcoeff[p_1, \cdots, p_{n-1}], \ c_n\mapsto 0.$$
    The map $\pi$ induces a map of $\ringcoeff$-vector spaces
    \begin{multline*}
        \varphi_{\vec{p}}: \ringcoeff[t, v]/(tv, t^{2n-2} + p_1 t^{2n-4} + \cdots + p_{n-1} + v^2) \\
        \cong \ringcoeff[t, v]/(tv-c_n, t^{2n-2} + p_1 t^{2n-4} + \cdots + p_{n-1} + v^2) \otimes_{\co_{\ld{\g}^\ast\mmod \ld{G}}} \co_{\ld{\fr{h}}^\ast \mmod \ld{H}} \\
        \to \ringcoeff[t]/(t^{2n-2} + p_1 t^{2n-4} + \cdots + p_{n-1})
    \end{multline*}
    sending $v\mapsto 0$.
    In other words, this is a linear map $\varphi_{\vec{p}}: \std_{2n} \to \std_{2n-2}$. That is, $\pi$ induces a map
    $$\kappa: \ld{\fr{h}}^\ast \mmod \ld{H} \cong \spec \ringcoeff[p_1, \cdots, p_{n-1}] \to \Hom(\std_{2n}, \std_{2n-2}), \ \vec{p} \mapsto \varphi_{\vec{p}};$$
    this is the desired Kostant section.
\end{remark}

\begin{remark}
    Now let $k = \QQ_2(\zeta_8)$.
    The theory of $2$-compact groups as studied, e.g., in \cite{andersen-grodal}, suggests viewing the Dwyer-Wilkerson space $\DW_3$ from \cite{dwyer-wilkerson-2-compact} as an analogue of the groups $\SO_3 \cong \PGL_2$ and $\G_2$; see \cref{table: analogies}. The $2$-complete space $\DW_3$ is equipped with an $\E{1}$-structure, and it has finite mod $2$ cohomology. It is therefore natural to ask whether there is an analogue of \cref{thm: main} and \cref{thm: G2 main}, where $\PGL_2$ and $\G_2$ are replaced by $\DW_3$; this is closely related to \cite[Appendix C(s)]{ku-rel-langlands}. 
    \begin{table}[h]
    \begin{tabular}{ |c|c|c|c|c|c| } 
    \hline
    Group & Rank & Dimension & $\FF_2$-cohomology of $BG$ & Weyl group \\
    $G_n$ & $n$ & $(2^{n+1} - 1)n$ & $\widehat{\Sym}^\ast(\FF_2^{n+1}(-1))^{\GL_{n+1}(\FF_2)}$ & $\Z/2 \times \GL_n(\FF_2)$ \\
    \hline
    \hline
    $\PGL_2$ & $1$ & $3$ & $\FF_2\pw{w_2, w_3}$ & $\Z/2$ \\
    $\G_2$ & $2$ & $14$ & $\FF_2\pw{w_4, w_6, w_7}$ & $\Z/2 \times \Sigma_3$ \\
    $\DW_3$ & $3$ & $45$ & $\FF_2\pw{w_8, w_{12}, w_{14}, w_{15}}$ & $\Z/2 \times \mathrm{PSL}_2(\FF_7)$ \\
    \hline
    \end{tabular}
    \vspace{1cm}
    \caption{Analogies between the ($2$-compact) groups $\PGL_2 = \SO_3$, $\G_2$, and $\DW_3$; all of these are Poincar\'e duality complexes of dimension indicated in the third column. Here, $w_n$ denotes the $n$th Stiefel-Whitney class, and the ring in the fourth column is known as the algebra of rank $n+1$ Dickson invariants. Note, also, that the Weyl group of $\DW_3$ is called $G_{24}$ in the Shephard-Todd classification.}
    \label{table: analogies}
    \end{table}

    It is difficult to answer this question since the representation theory of $\DW_3$ is not well-understood. For instance, one can ask the somewhat outrageous question of whether there is a $2$-compact group $G$ with an $\E{1}$-map $\DW_3 \to G$ such that $G/\DW_3$ is the $2$-completion of a framed $30$-manifold with Kervaire invariant one. (See \cite{jones-30-manifold} for a construction of such a $30$-manifold.) This desideratum is analogous to the equivalences $\PGL_2^{\times 3}/\PGL_2 \cong \RP^3 \times \RP^3$ and $\PSO_8/\G_2 \cong \RP^7 \times \RP^7$. If such a $G$ exists, and there is a good theory of $G\pw{t}$-equivariant sheaves of (``$2$-completed'') $k$-modules, it seems reasonable to expect that there is an equivalence of the form
    $$\Shv^{c,\Sat}_{G\pw{t}}(G\ls{t}/\DW_3\ls{t}; k) \cong \Perf^\sh(\std^{\otimes 3}(28,\vec{14},-14,\vec{0})/\SL_2(-14\rho)^{\times 3} \times \AA^2(8, 12)).$$
    Here, the ``Whittaker'' factor $\AA^2(8,12)$ on the right-hand side comes from the isomorphism
    $$\spf \H^\ast(B\DW_3; k) := \spf \H^\ast(B\DW_3; \Z_2) \otimes_{\Z_2} k \cong \widehat{\AA}^3(8, 12, 28),$$
    which follows from running the Bockstein spectral sequence on 
    $$\H^\ast(B\DW_3; \FF_2) \cong \FF_2\pw{w_8, w_{12}, w_{14}, w_{15}},$$
    and the fact that the Bockstein sends $w_{14} \mapsto w_{15}$.\footnote{A previous version of this remark asked for $G/\DW_3$ to be $\RP^{15} \times \RP^{15}$. One can check that such a $G$, if it existed, would have rational cohomology given by
    $$\H^\ast(BG; \Z_2) \otimes_{\Z_2} k \cong k\pw{c_4, c_6, c_{14}, x, y},$$
    where both $x$ and $y$ live in cohomological degree $16$.
    In an email, Jesper Grodal told me that such a $G$ cannot exist (it would have to be the $2$-completion of a compact Lie group, but no compact Lie group has the desired cohomology).} 
    Even if such a $G$ does not exist, one can still wonder about the analogue of the ``regular centralizer'' group scheme calculation from \cref{thm: main}: 
    \begin{itemize}
        \item Is there a good notion of \textit{genuine} {equivariant} $\DW_3$-cohomology (with coefficients in $k = \QQ_2(\zeta_8)$, say)? One should have $\spec \H^\ast_{\DW_3}(\ast; k)\cong \AA^3(8, 12, 28)$.
        \item Is there a faithful (basepoint-preserving) action of $\DW_3$ on $S^{15}$? Similarly, is there a faithful (basepoint-preserving) action of $\DW_3$ on the $2$-completion of a framed $30$-manifold $M^{30}$ with Kervaire invariant one?
        \item For the above expected action, is there an isomorphism
        $$\spec \H^{\DW_3}_\ast(\Omega M^{30}; k) \cong \AA^2(8, 12) \times (\AA^1(28) \times_{\std^{\otimes 3}(28,\vec{14},-14,\vec{0})/\SL_2(-14\rho)^{\times 3}} \AA^1(28))$$
        of graded group schemes over $k$?
    \end{itemize}
\end{remark}

\section{Variants}

\begin{remark}
    As in \cite{bhargava-composition-i}, understanding the $\SL_2^{\times 3}$-equivariant geometry of cubes can be specialized to understand variant situations. We will sketch some such variants below. 
    The observations motivating the discussion in this section are the main results of \cite{bhargava-composition-i}, and the analogy, likely already observed by the reader familiar with Bhargava's work, between the stack $B_{\AA^1\mmod (\Z/2)} \ld{J}$ and the narrow class group of a quadratic extension of $\QQ$. This analogy is not very surprising once one recognizes that $\ld{J}$ is just the nonsplit torus $x^2 - a^2 y^2 = 1$ over $\AA^1\mmod (\Z/2) = \spec \ringcoeff[a^2]$ (see \cite[Example 3.7.14]{ku-rel-langlands}).
\end{remark}
Let us begin with the case of squares/degree $2$ extensions. More precisely, recall from \cite{bf-derived-satake} that the (arithmetically sheared; see \cite{bzsv}) derived geometric Satake equivalence for $\PGL_2$ states:
\begin{theorem}[Bezrukavnikov-Finkelberg]\label{thm: bf derived satake}
    There is a monoidal equivalence of $\infty$-categories
    $$\Shv^{c,\Sat}_{\PGL_2^{\times 2}\pw{t}}(\PGL_2^{\times 2}\ls{t}/\PGL_2^\mathrm{diag}\ls{t}; \ringcoeff) \simeq \Perf^\sh(\sl_2^\ast(2-2\rho)/\SL_2(-2\rho)).$$
    The latter can alternatively be understood as the $\infty$-category of perfect complexes on a shearing of $T^\ast(\SL_2)/\SL_2^{\times 2}$.
\end{theorem}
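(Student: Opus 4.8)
This statement is, up to the arithmetic shearing of \cite[Section 6.7]{bzsv}, literally the derived geometric Satake equivalence of \cite{bf-derived-satake}: the map $(g,h)\mapsto gh^{-1}$ identifies the spherical variety $\PGL_2^{\times 2}/\PGL_2^\mathrm{diag}$ with $\PGL_2$ under the two-sided translation action, so that the left-hand side is the categorified spherical Hecke category
$$\Shv^{c,\Sat}_{\PGL_2^{\times 2}\pw{t}}(\PGL_2^{\times 2}\ls{t}/\PGL_2^\mathrm{diag}\ls{t}; \ringcoeff) \simeq \Shv^{c,\Sat}_{(\PGL_2\times\PGL_2)\pw{t}}(\PGL_2\ls{t}; \ringcoeff),$$
and left-trivializing the cotangent bundle gives $T^\ast\SL_2\cong \SL_2\times\sl_2^\ast$, whence quotienting by the free left translation and then by the residual coadjoint right action identifies $T^\ast(\SL_2)/\SL_2^{\times 2}$ with $\sl_2^\ast/\SL_2$ compatibly with gradings. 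Thus one could simply cite \cite{bf-derived-satake}. For uniformity with the proofs of \cref{thm: main} and \cref{thm: G2 main}, however, I would instead deduce it from the criterion of \cite[Theorem 3.6.4]{ku-rel-langlands}, the case of which at hand is the ``group case'' in which the dual Hamiltonian $\SL_2$-variety is $\sl_2^\ast$ itself (equivalently $T^\ast(\SL_2)$ as an $\SL_2^{\times 2}$-variety).

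Concretely: the relevant Kostant section is the classical one, $\kappa\colon \sl_2^\ast\mmod\SL_2\cong\AA^1\to\sl_2^\ast$ sending $a^2$ to the companion matrix $\begin{psmallmatrix} 0 & 1\\ a^2 & 0\end{psmallmatrix}$, whose $\SL_2$-orbit is exactly the regular (equivalently nonzero) locus. Its complement is the single point $0\in\sl_2^\ast$, of codimension $3\geq 2$, so condition (a) of \cite[Theorem 3.6.4]{ku-rel-langlands} follows from the algebraic Hartogs theorem verbatim as in the proof of \cref{thm: main}, with $\std^{\otimes 3}$ replaced by $\sl_2^\ast$. Condition (b) asks that the fiber product $\ld{J}_X=\sl_2^\ast\mmod\SL_2\times_{\sl_2^\ast/\SL_2}\sl_2^\ast\mmod\SL_2$, which by construction is the regular centralizer group scheme $\ld{J}$ of $\SL_2$, be isomorphic as a graded group scheme over $\H^\ast_{\PGL_2}(\ast;\ringcoeff)\cong\ringcoeff[a^2]$ to $\spec\H^{\PGL_2}_\ast(\Omega\PGL_2;\ringcoeff)$; this is precisely \cref{eq: reg centr PGL2}, i.e.\ the computation of \cite{bfm}. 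Here, unlike in \cref{thm: main}, the optimality hypothesis is automatic: the orbit stratification of $\PGL_2\ls{t}$ under $(\PGL_2\times\PGL_2)\pw{t}$ is the usual affine-Grassmannian stratification, and purity of the relevant $\IC$-sheaves is classical geometric Satake. Invoking \cite[Theorem 3.6.4]{ku-rel-langlands} then produces the desired equivalence of $\ringcoeff$-linear $\infty$-categories, and the identification $\sl_2^\ast/\SL_2\cong T^\ast(\SL_2)/\SL_2^{\times 2}$ above gives the final sentence.

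The one point genuinely requiring input beyond the criterion is the \emph{monoidality} of the equivalence: \cite[Theorem 3.6.4]{ku-rel-langlands} produces an equivalence together with its module structure over the spherical Hecke category $\Perf^\sh(\sl_2^{\ast,\times 2}(2-2\rho)/\SL_2^{\times 2}(-2\rho))$, and in the group case one must upgrade this to match the convolution product on $\Shv^{c,\Sat}_{(\PGL_2\times\PGL_2)\pw{t}}(\PGL_2\ls{t};\ringcoeff)$ with the tensor product of perfect complexes on $\sl_2^\ast/\SL_2$. I expect this to be the main obstacle if one insists on self-containedness: the cleanest route is to observe that both sides carry compatible $\E{3}$-algebra structures refining the module structure already produced (from the factorizable/Ran-space description, cf.\ \cite[Proposition 16.1.4]{bzsv}), forcing the equivalence to be monoidal; the alternative is simply to cite the monoidality assertion of \cite{bf-derived-satake}, which is exactly this statement.
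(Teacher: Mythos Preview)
Your proposal is correct, but it does substantially more than the paper: the paper does not prove \cref{thm: bf derived satake} at all. It is stated as a recollection (``recall from \cite{bf-derived-satake} that \ldots'') and simply attributed to Bezrukavnikov--Finkelberg, with a one-sentence remark afterwards that the key input is the identification of the affine closure of $(\SL_2 \times \SL_2 \times \sl_2^\ast\mmod \SL_2)/\ld{J}$ with $T^\ast \SL_2$ (citing \cite[Section 3.2]{ku-rel-langlands}).

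What you have written is a genuine proof sketch, and it is the natural one from the paper's own perspective: you run the same criterion \cite[Theorem 3.6.4]{ku-rel-langlands} used for \cref{thm: main} and \cref{thm: G2 main}, with the classical Kostant slice and the regular centralizer computation \cref{eq: reg centr PGL2} from \cite{bfm}. This is correct and self-contained, and your observation that optimality is automatic here (being the usual affine Grassmannian stratification) is a nice point the paper does not make explicit. Your discussion of monoidality is also accurate: the criterion only gives a module-category equivalence, and one must either invoke the $\E{3}$-structure from factorization or cite \cite{bf-derived-satake} directly for the monoidal upgrade. In short, the paper treats this as a black-box citation; your version shows how it fits into the paper's own framework.
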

As described in \cite[Section 3.2]{ku-rel-langlands}, one key input into \cref{thm: bf derived satake} is that if $\ld{J}$ denotes the group scheme of regular centralizers for $\SL_2$, with the embedding $\ld{J} \hookrightarrow \SL_2 \times \SL_2 \times \sl_2^\ast\mmod \SL_2$ via $g \mapsto (g, g^{-1})$, the affine closure of the quotient $(\SL_2 \times \SL_2 \times \sl_2^\ast\mmod \SL_2)/\ld{J}$ is isomorphic to $T^\ast \SL_2$.\footnote{The fact that the embedding sends $g\mapsto (g,g^{-1})$, as opposed to being the diagonal, is ultimately why the Chevalley involution shows up in the spectral side of geometric Langlands.}

One can ask for a \textit{variant} of \cref{thm: bf derived satake}, where the embedding $\PGL_2^\mathrm{diag}\ls{t} \hookrightarrow \PGL_2^{\times 2}\ls{t}$ is replaced by the embedding of $\PGL_2\ls{t}$ into $(\Res_{\cc\pw{t^{1/2}}/\cc\pw{t}} \PGL_2)\ls{t}$. The latter is the base-change to $\cc\ls{t}$ of the Weil restriction of the constant group scheme $\PGL_2$ along $\cc\pw{t} \subseteq \cc\pw{t^{1/2}}$. More generally, one could replace $\cc\pw{t^{1/2}}$ by $\cc\pw{t^{1/n}}$.  In this case, we have the following expectation.
\begin{conjecture}\label{conj: langlands nonsplit tn}
    Let $G_n = \Res_{\cc\pw{t^{1/n}}/\cc\pw{t}} G$.
    Then there is a fully faithful functor
    $$\Perf^\sh(\ld{G}\backslash \ol{(\ld{G} \times \ld{\g}^\ast(2)\mmod \ld{G})/\ld{J}[n]}) \hookrightarrow \Shv_{G_n\pw{t}}^{c}(G_n\ls{t}/G\ls{t}; \ringcoeff),$$
    where $\ld{G} \times \ld{\g}^\ast(2)\mmod \ld{G}$ denotes the constant group scheme over $\ld{\g}^\ast(2)\mmod \ld{G}$, $\ld{J}$ is the group scheme of regular centralizers for $\ld{G}$, $\ld{J}[n]$ is its $n$-torsion subgroup, and $\ol{(\ld{G} \times \ld{\g}^\ast(2)\mmod \ld{G})/\ld{J}[n]}$ is the affine closure of the quotient ${(\ld{G} \times \ld{\g}^\ast(2)\mmod \ld{G})/\ld{J}[n]}$.
    Furthermore, this full subcategory is stable under the action of the spherical Hecke category $\Shv_{(G_n \times G_n)\pw{t}}^{c}(G_n\ls{t}; \ringcoeff)$ on $\Shv_{G_n\pw{t}}^{c}(G_n\ls{t}/G\ls{t}; \ringcoeff)$.
\end{conjecture}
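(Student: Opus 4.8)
The plan is to prove \cref{conj: langlands nonsplit tn} by verifying a fully-faithful version of the criterion of \cite[Theorem 3.6.4]{ku-rel-langlands}, exactly as in the proofs of \cref{thm: main} and \cref{thm: bf derived satake}, with dual datum the Hamiltonian $\ld{G}$-variety $\ld{M} := \ol{(\ld{G} \times \ld{\g}^\ast(2)\mmod \ld{G})/\ld{J}[n]}$ of the statement. The first step is to re-express the left-hand side in loop-group terms. Since $\cc\pw{t^{1/n}}/\cc\pw{t}$ is cyclic and totally ramified of degree $n$, Weil restriction identifies $G_n\pw{t} \cong G\pw{s}$ and $G_n\ls{t}\cong G\ls{s}$ with $s = t^{1/n}$, under which the constant subgroup $G\ls{t}\hookrightarrow G_n\ls{t}$ becomes $G\ls{s^n}\hookrightarrow G\ls{s}$; thus
$$\Shv^{c}_{G_n\pw{t}}(G_n\ls{t}/G\ls{t}; \ringcoeff) \ \simeq\ \Shv^{c}_{G\pw{s}}(G\ls{s}/G\ls{s^n}; \ringcoeff),$$
the $\infty$-category of equivariant constructible sheaves on the Hecke-type stack attached to the formal disk with a single point of multiplicity $n$ (one may think of this as a degeneration to the diagonal of the $n$-fold Beilinson--Drinfeld Grassmannian of $G$); it carries a distinguished object $\IC_0$ and an action of $\Rep(\ld{G_n})$. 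Since the special fibre of $G_n$ is $G\ltimes U$ with $U$ pro-unipotent, $G_n$ is homotopy equivalent to $G$, so the equivariant cohomology rings appearing below agree with those of $G$.

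Writing $\ld{J}_X := \ld{\g}^\ast(2)\mmod \ld{G}\times_{\ld{M}/\ld{G}}\ld{\g}^\ast(2)\mmod \ld{G}$, condition (a) of \cite[Theorem 3.6.4]{ku-rel-langlands} asks that the ring of regular functions on $(\ld{\g}^\ast(2)\mmod \ld{G}\times \ld{G})/\ld{J}_X$ agree, as a graded algebra, with $\co_{\ld{M}}$. Granting the identification of $\ld{J}_X$ with $\ld{J}[n]$ below (embedded in the constant group scheme $\ld{G}\times\ld{\g}^\ast(2)\mmod\ld{G}$ via the Kostant section $\kappa$ of $\ld{G}$), this holds tautologically, since $\ld{M}$ is \emph{defined} to be the affine closure of that quotient. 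This is precisely the point where, for a genuinely hyperspherical dual, one would instead have to prove a codimension-$\geq 2$ statement in the spirit of \cref{prop: G-orbit kostant codim} --- here none is available ($\ld{J}[n]$ is typically not even flat over the base), we make no claim that $\ld{M}$ is the Ben-Zvi--Sakellaridis--Venkatesh dual of $G_n\ls{t}/G\ls{t}$, and correspondingly one obtains only a fully faithful functor onto the subcategory generated by $\IC_0$ under $\Rep(\ld{G_n})$, rather than an equivalence with all of $\Shv^{c}_{G_n\pw{t}}(G_n\ls{t}/G\ls{t}; \ringcoeff)$.

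The content is condition (b): an isomorphism $\ld{J}_X \cong \spec \H^{G}_\ast(\Omega(G_n/G); \ringcoeff)$ of graded group schemes over $\ld{\g}^\ast(2)\mmod \ld{G}\cong \spec\H^\ast_{G}(\ast; \ringcoeff)$. I would argue as in the proof of \cref{thm: main}, where the fibre sequence $\PGL_2^\mathrm{diag}\to \PGL_2^{\times 3}\to \PGL_2^{\times 3}/\PGL_2^\mathrm{diag}$ was the key input. Here the relevant based loop space is the homotopy fibre of $G\ls{s^n}\hookrightarrow G\ls{s}$, and the inclusion $\cc\pw{t}\hookrightarrow\cc\pw{t^{1/n}}$ (that is, $t\mapsto s^n$) induces on affine Grassmannians the map $\Gr_{G,t}\to\Gr_{G,s}$ given by multiplication by $n$ on coweights (equivalently, under Quillen's identification $\Omega_{\mathrm{hol}}G\simeq\Gr_G$, the reparametrization $\gamma\mapsto\gamma^n$). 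By the Bezrukavnikov--Finkelberg--Mirković computation \cite{bfm} (cf. \cref{eq: reg centr PGL2}) one has $\spec\H^{G}_\ast(\Omega G;\ringcoeff)\cong \ld{J}$, and multiplication by $n$ on coweights induces the $n$-th power endomorphism $[n]\colon\ld{J}\to\ld{J}$ of this commutative group scheme; hence
$$\spec \H^{G}_\ast(\Omega(G_n/G); \ringcoeff)\ \cong\ \ker\!\big([n]\colon \ld{J}\to\ld{J}\big)\ =\ \ld{J}[n],$$
which is the claimed $\ld{J}_X$. Matching the arithmetic-shearing gradings (weight $2n\rho$ on $\ld{G}$, weight $2n$ on $\ld{\g}^\ast$) is the same bookkeeping carried out in \cref{prop: stab of kostant} and the proof of \cref{thm: main}.

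The hard part is making the previous paragraph rigorous, and there are three places requiring genuine work. (i) The quotient $G\ls{s}/G\ls{s^n}$ is not literally of the form $G'\ls{s}/H'\ls{s}$ for algebraic groups $G',H'$ over $\cc$, so \cite[Theorem 3.6.4]{ku-rel-langlands} must be extended to this ``ramified'' setting; producing $\IC_0$ and the $\Rep(\ld{G_n})$-action is unproblematic, but identifying the endomorphism algebra of $\IC_0$ with the homology of a based loop space requires adapting the arguments of \cite[Section 3]{ku-rel-langlands} to a point of multiplicity $n$. (ii) One needs the Bezrukavnikov--Finkelberg--Mirković isomorphism to be natural for the finite flat base change $\cc\pw{t}\hookrightarrow\cc\pw{t^{1/n}}$, so that pullback along the $n$-fold cover is realized as $[n]$ on regular centralizers; this should follow from the factorization construction of $\ld{J}$ (cf. \cite[Section 16]{bzsv}), but has not been written down. (iii) Fully-faithfulness of the resulting functor requires formality of $\End(\IC_0)$: when $\ld{M}$ is an affine space --- for instance $G$ of type $A_1$ and $n=3$, where one computes $\ld{M}\cong\Sym^3(\std)$, cf. \cref{conj: langlands binary cubics} --- this is automatic from the $\E{3}$-structure coming from factorization and the fact that the homotopy $\co_{\ld{M}}$ is polynomial on generators in even weights (\cite[Lemma 2.1.9]{ku-rel-langlands}), but in general it is a further input, exactly as for \cref{thm: G2 main}. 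Granting (i)--(iii), stability under the spherical Hecke category $\Shv^{c}_{(G_n\times G_n)\pw{t}}(G_n\ls{t}; \ringcoeff)$ is built into the criterion, and corresponds on the spectral side to pullback along the moment map $\ld{M}\to\ld{\g}^\ast$ for the Hamiltonian $\ld{G}$-action, as in the diagram of \cref{rmk: sympl on C2 cubed}.
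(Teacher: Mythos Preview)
The statement you are attempting to prove is labeled a \emph{conjecture} in the paper, and the paper does not provide a proof. Immediately after stating it, the author writes that \cite[Theorem 3.6.4]{ku-rel-langlands} ``does not apply in this situation, because $G_n$ is not the base-change of a constant group scheme over $\cc$,'' and then only offers supporting evidence: \cref{lem: mult by n on homology} (that the degree-$n$ map on $\Omega G$ induces multiplication by $n$ on $\ld{J}$) together with explicit verifications of the regular-centralizer side for $G=\PGL_2$ and $n=2,3,4$. So there is no paper proof to compare against.

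That said, your sketch is essentially the heuristic the paper has in mind, and you have correctly isolated the genuine obstructions. Your point (i) --- that $G\ls{s}/G\ls{s^n}$ is not literally of the form $G'\ls{s}/H'\ls{s}$ for algebraic $G',H'$ over $\cc$, so the criterion of \cite[Theorem 3.6.4]{ku-rel-langlands} must be extended --- is exactly the obstacle the paper names. Your point (ii), naturality of the BFM isomorphism under $t\mapsto s^n$, is the content the paper packages as \cref{lem: mult by n on homology}, though the paper states it without proof. Your point (iii) on formality is the same caveat the paper raises for \cref{thm: G2 main}. The identification $\spec \H^G_\ast(\Omega(G_n/G);\ringcoeff)\cong \ld{J}[n]$ you derive from the fibre sequence is precisely the computation the paper is gesturing at. In short: your outline matches the paper's motivation, you have not filled the gaps (nor has the paper), and the statement remains open.
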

The affine closure $\ol{(\ld{G} \times \ld{\g}^\ast(2)\mmod \ld{G})/\ld{J}[n]}$ admits the structure of a (finite-type) Hamiltonian $\ld{G}$-scheme of dimension
$$\dim\left(\ol{(\ld{G} \times \ld{\g}^\ast(2)\mmod \ld{G})/\ld{J}[n]}\right) = \dim(\ld{G}) + \mathrm{rank}(\ld{G}) = 2\dim(\ld{G}/\ld{N}).$$
Moreover, its invariant-theoretic quotient by the $\ld{G}$-action is $\ld{\g}^\ast\mmod \ld{G}$. When $G = \PGL_2$, we describe this variety explicitly for $n=2,3,4$.

Although \cite[Theorem 3.6.4]{ku-rel-langlands} does not apply in this situation, because $G_n$ is not the base-change of a constant group scheme over $\cc$, we can nevertheless attempt to compute the analogue of the regular centralizer group scheme for the pair $G \hookrightarrow G_n$.
The following is a helpful tool in understanding these variant cases; assume for simplicity that $G$ is a connected reductive group whose derived subgroup is almost simple.
\begin{lemma}\label{lem: mult by n on homology}
    Let $n$ be a nonnegative integer, so that the degree $n$ map $S^2 \to S^2$ induces an $\E{1}$-endomorphism\footnote{This is an analogue for $\E{2}$-spaces of the observation that if $X$ is a space, the degree $n$ map $S^1 \to S^1$ induces a map $\Omega X \to \Omega X$ which sends $\gamma\to \gamma^n$. This is only a pointed map, and not necessarily an $\E{1}$-map, since taking powers is generally not a map of monoids. However, if $X$ is itself an $\E{1}$-space, so that $\Omega X$ is an $\E{2}$-space, the map $\gamma \to \gamma^n$ is one of $\E{1}$-spaces.} $[n]$ of $\Omega^2 BG \simeq \Omega G$. Under the homotopy equivalence $\Omega G \simeq G\ls{t}/G\pw{t}$, the map $[n]$ is induced by the map $\cc\pw{t} \to \cc\pw{t^{1/n}}$. Moreover, under the isomorphism 
    $$\spec \H^{G}_\ast(\Omega G; \ringcoeff) \cong \ld{J}$$
    of \cite[Theorem 1.2]{homology-langlands} (see also \cite{bfm}), the map $[n]^\ast: \co_{\ld{J}} \to \co_{\ld{J}}$ induced by $[n]: \Omega G \to \Omega G$ is given by multiplication by $n$ on the ring of functions $\co_{\ld{J}}$.
\end{lemma}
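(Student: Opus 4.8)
The plan is to deduce all three assertions from a single observation: that $[n]$, correctly interpreted, is the $n$-th power map on $\Omega G$ for its loop-concatenation $\E{1}$-structure. First, model the degree $n$ self-map of $S^2=\Sigma S^1$ as the suspension of the $n$-fold pinch--fold self-map of $S^1$. Precomposition along it is an endomorphism $[n]$ of $\Map_\ast(S^2, BG)\simeq\Map_\ast(S^1,\Omega BG)=\Map_\ast(S^1,G)=\Omega G$, and unwinding the pinch--fold description this is exactly $\gamma\mapsto\gamma^{\star n}$ for loop concatenation $\star$. Since $\Omega G=\Omega^2 BG$ is a double loop space it is $\E{2}$, and --- as recalled in the footnote --- the $n$-th power map for one of the two $\E{1}$-structures of an $\E{2}$-space canonically refines to a map of $\E{1}$-spaces; this yields the asserted $\E{1}$-endomorphism $[n]$. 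For the comparison with $\cc\pw{t}\hookrightarrow\cc\pw{t^{1/n}}$: under the classical homotopy equivalence $G\ls{t}/G\pw{t}\simeq\Omega G$, an element $g(t)\in G\ls{t}$ maps, along $\cc\ls{t}=\cc\ls{s^n}\hookrightarrow\cc\ls{s}$, to $g(s^n)$, whose restriction to the unit circle $|s|=1$ runs through the original loop $n$ times; thus this is precisely precomposition with the degree $n$ self-circle map, i.e.\ $[n]$.

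Now I compute the effect on $\co_{\ld{J}}=\H^G_\ast(\Omega G;\ringcoeff)$. Over the $\QQ$-algebra $\ringcoeff$ the equivariant homology $\H^G_\ast(\Omega G;\ringcoeff)$ is free over $\H^\ast_G(\ast;\ringcoeff)$ --- part of the input behind \cite[Theorem 1.2]{homology-langlands} --- so the Künneth map is an isomorphism, and $\spec\H^G_\ast(-;\ringcoeff)$ sends $\E{1}$-spaces of this type to affine group schemes over $\spec\H^\ast_G(\ast;\ringcoeff)\cong\ld{\g}^\ast\mmod\ld{G}$, compatibly with finite products, hence with diagonal maps; by \cite[Theorem 1.2]{homology-langlands} it carries $\Omega G$ with its concatenation product to the (commutative) regular centralizer group scheme $\ld{J}$ with its group law. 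Since, by the previous paragraph, $[n]$ is the composite $\Omega G\xrightarrow{\Delta}(\Omega G)^{\times n}\xrightarrow{\ \star\ }\Omega G$ of the iterated diagonal with the iterated product, applying this functor gives the composite $\ld{J}\xrightarrow{\Delta}\ld{J}^{\times n}\xrightarrow{\ \mathrm{prod}\ }\ld{J}$ of group schemes over $\ld{\g}^\ast\mmod\ld{G}$, which is the standard description of multiplication by $n$ on any commutative group scheme. Passing to rings of functions gives the claim that $[n]^\ast$ is pullback along multiplication by $n$ on $\ld{J}$.

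The one genuinely delicate point is the coherence in the first step: promoting the a priori merely pointed map $[n]$ to a canonical $\E{1}$-endomorphism, and --- more importantly --- verifying that the $\E{1}$-structure on $\Omega G$ involved is literally the one used in \cite[Theorem 1.2]{homology-langlands} to equip $\ld{J}$ with its group structure (rather than its opposite, which could matter in any noncommutative intermediate step). Once one fixes a single model in which ``$G\ls{t}/G\pw{t}\simeq\Omega G$ as $\E{1}$-spaces, with the convolution/fusion product on the left corresponding to loop concatenation on the right'', both the $\E{1}$-refinement of $[n]$ and its identification with the map induced by $\cc\pw{t}\hookrightarrow\cc\pw{t^{1/n}}$ become formal, and the rest is pure functoriality of $\spec\H^G_\ast(-;\ringcoeff)$.
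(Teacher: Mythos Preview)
The paper states this lemma without proof, so there is no argument to compare against; your proposal correctly supplies what the paper leaves implicit. The identification of $[n]$ with the $n$-th power map for loop concatenation (via the suspension description of the degree-$n$ self-map of $S^2$), the matching with $t\mapsto t^n$ on the affine Grassmannian, and the passage to $\ld{J}$ via the Hopf algebra structure on $\H^G_\ast(\Omega G;\ringcoeff)$ are all the right moves.

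One small point of variance to be careful about: the functor $X\mapsto\spec\H^G_\ast(X;\ringcoeff)$ is \emph{contravariant} from $\E{2}$-spaces (with Pontryagin product giving the ring structure) to affine schemes over $\ld{\g}^\ast\mmod\ld{G}$. So strictly speaking, the concatenation product $\star$ on $\Omega G$ corresponds to the \emph{diagonal} on $\ld{J}$ (since $\star_\ast$ is the ring multiplication on $\co_{\ld{J}}$), and the diagonal on $\Omega G$ corresponds to the group multiplication on $\ld{J}$ (since $\Delta_\ast$ is the comultiplication). Thus your factorization $[n]=\star\circ\Delta$ becomes, under the contravariant functor, $m_{\ld{J}}\circ\Delta_{\ld{J}}$ with the labels swapped relative to your text. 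The conclusion is unaffected: on any commutative group scheme, this composite is multiplication by $n$. Your final paragraph's caveat about matching the $\E{1}$-structure used in the cited isomorphism is well taken and is the only genuinely delicate point.
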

\begin{remark}
    Although the map $[n]: \Omega G \to \Omega G$ is only an $\E{1}$-endomorphism, it can be shown that the induced endomorphism of $C^{G}_\ast(\Omega G; \ringcoeff)$ is one of $\E{2}$-$\ringcoeff$-algebras. Upon Borel-completion (so $C^{G}_\ast(\Omega G; \ringcoeff)$ is replaced by $\ringcoeff[\Omega G]^{hG}$), this is a consequence of the observation that $\ringcoeff[\Omega G]^{hG}$ is an $\E{3}$-$\ringcoeff$-algebra: it is the $\E{2}$-Hochschild cohomology of $\ringcoeff[\Omega G]$, and the $\E{3}$-structure comes from the Deligne conjecture. Alternatively, the completion of $C^{G}_\ast(\Omega G; \ringcoeff)$ at the cellular filtration of $\Omega G$ can be identified with the $\E{2}$-Hochschild cohomology of $C_{G}^\ast(\ast; \ringcoeff)$, and the $\E{3}$-structure again comes from the Deligne conjecture. See \cite[Corollary 3.5.12]{ku-rel-langlands}.
\end{remark}
Let us now describe \cref{conj: langlands nonsplit tn} explicitly when $G = \PGL_2$.
\begin{example}[\cref{conj: langlands nonsplit tn} for $n=2$]
    \cref{lem: mult by n on homology} suggests that the analogue of the regular centralizer group scheme for \cref{conj: langlands nonsplit tn} is the the $2$-torsion subgroup $\ld{J}[2]$ of $\ld{J}$.
    %; following \cite[Theorem 3.6.4]{ku-rel-langlands}, one expects $\Shv_{G\pw{t}}^{c}(G\ls{t}/\PGL_2\ls{t}; \ringcoeff)$ to be the $\infty$-category of perfect complexes on (a shearing of) the $\SL_2$-quotient of the affine closure of $(\SL_2 \times \sl_2^\ast\mmod \SL_2)/\ld{J}[2]$.
    As described in \cite{bfm}, $\ld{J}$ can be viewed as the group scheme over $\sl_2^\ast\mmod \SL_2 \cong \spec \ringcoeff[y]$ of matrices of the form $g = \begin{psmallmatrix}
        a & b \\
        by & a
    \end{psmallmatrix}$ with $\det(g) = a^2 - b^2y = 1$.
    To understand $\ld{J}[2]$, note that $g^2 = \begin{psmallmatrix}
        a^2 + b^2 y & 2ab \\
        2aby & a^2 + b^2 y
    \end{psmallmatrix}$, and since $2$ is a unit in $\ringcoeff$, we find that $g$ is $2$-torsion if and only if $ab = 0$ and $a^2 + b^2 y = 1$. But since $\det(g) = 1$, this forces $a^2 = 1$ and $b = 0$. In other words, $\ld{J}[2]$ is isomorphic to the constant group scheme $\mu_2 \times \sl_2^\ast\mmod \SL_2$. It follows that $(\SL_2 \times \sl_2^\ast\mmod \SL_2)/\ld{J}[2] \cong \PGL_2 \times \sl_2^\ast\mmod \SL_2$. No affine closure is necessary, since this is already affine. This suggests that there is a fully faithful functor from the $\infty$-category of perfect complexes on a shearing of the $\SL_2$-quotient of $\PGL_2 \times \sl_2^\ast\mmod \SL_2$ to $\Shv_{G\pw{t}}^{c}(G\ls{t}/\PGL_2\ls{t}; \ringcoeff)$, i.e., a fully faithful functor
    \begin{align*}
        \Perf^\sh((\PGL_2 \times \sl_2^\ast(2)\mmod \SL_2)/\SL_2) & \simeq \Perf^\sh(B\mu_2 \times \sl_2^\ast(2)\mmod \SL_2) \\
        & \hookrightarrow \Shv_{G\pw{t}}^{c}(G\ls{t}/\PGL_2\ls{t}; \ringcoeff).
    \end{align*}
    This is the specialization of \cref{conj: langlands nonsplit tn} to the present case. Such a fully faithful functor does indeed exist: the inclusion of a basepoint of each of the two connected components of $G\ls{t}$ corresponds to the inclusion of each of the two factors of $\Perf^\sh(B\mu_2 \times \sl_2^\ast(2)\mmod \SL_2) \simeq \Perf^\sh(\sl_2^\ast(2)\mmod \SL_2)^{\oplus 2}$.
\end{example}

Let us now turn to the cubic case. 
%Here, \cref{thm: main} gives an equivalence (under the assumption of optimality)
%$$\Shv_{\PGL_2^{\times 3}\pw{t}}^{c}(\PGL_2^{\times 3}\ls{t}/\PGL_2^\mathrm{diag}\ls{t}); \ringcoeff) \simeq \Perf^\sh(\std^{\otimes 3}(4,\vec{2},-2,\vec{0})/\SL_2(-2\rho)^{\times 3}).$$
%One can ask for variants of \cref{thm: main}. Here, there are two choices: the group scheme $\PGL_2^{\times 3}$ over $\cc\pw{t}$ can be replaced either by the Weil restriction $\Res_{\cc\pw{t^{1/3}}/\cc\pw{t}} \PGL_2$ along the cubic extension $\cc\pw{t} \subseteq \cc\pw{t^{1/3}}$, or by the Weil restriction $\Res_{\cc\pw{t^{1/2}} \times \cc\pw{t}/\cc\pw{t}} \PGL_2$ along the extension $\cc\pw{t} \subseteq \cc\pw{t^{1/2}} \times \cc\pw{t}$. For each of these cases, we have an expected answer (in parallel to parts of \cite[Section 2]{bhargava-composition-i}).
\begin{conjecture}[\cref{conj: langlands nonsplit tn} for $n=3$]\label{conj: langlands binary cubics}
    Let $\Sym^3(\std)(4,2,0,-2)$ denote the graded vector space of binary cubic forms, where such a form is viewed as a function $\AA^2(-2,0) \to \AA^1(-2)$. In other words, the coefficients of $ax^3 + 3bx^2 y + 3cxy^2 + dy^3$ have the following weights: $a$ lives in weight $-4$, $b$ lives in weight $-2$, $c$ lives in weight $0$, and $d$ lives in weight $2$.
    Let $G_3 = \Res_{\cc\pw{t^{1/3}}/\cc\pw{t}} \PGL_2$.
    Then there is a fully faithful functor
    $$\Perf^\sh(\Sym^3(\std)(4,2,0,-2)/\SL_2(-2\rho)) \hookrightarrow \Shv_{G_3\pw{t}}^{c}(G_3\ls{t}/\PGL_2\ls{t}; \ringcoeff).$$
\end{conjecture}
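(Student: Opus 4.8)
Since $G_3 = \Res_{\cc\pw{t^{1/3}}/\cc\pw{t}}\PGL_2$ is not the base change of a constant group scheme, \cite[Theorem 3.6.4]{ku-rel-langlands} does not apply verbatim. The plan is to (i) identify the spectral side $\Sym^3(\std)(4,2,0,-2)/\SL_2(-2\rho)$ of the conjecture with the quotient $\SL_2\backslash\ol{(\SL_2\times\sl_2^\ast(2)\mmod\SL_2)/\ld{J}[3]}$ predicted by \cref{conj: langlands nonsplit tn}, and then (ii) run the Weil-restriction analogue of the argument proving \cref{thm: main}, with \cref{lem: mult by n on homology} taking the place of the regular-centralizer computation \cref{eq: reg centr PGL2}. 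For (i), first compute $\ld{J}[3]$: writing $\ld{J}$ over $\sl_2^\ast\mmod\SL_2 \cong \spec\ringcoeff[a^2]$ as the nonsplit torus $\{u + va : u^2 - v^2 a^2 = 1\}$ with multiplication induced from $\ringcoeff[a]$, as in \cref{prop: stab of kostant}, the condition $(u+va)^3 = 1$ cuts out a quasi-finite subgroup scheme which is a form of $\mu_3 \cong \Z/3$ over the open locus $\{a \ne 0\}$ and is trivial over $\{a = 0\}$; in particular $\ld{J}[3]$ is \emph{not} flat over $\sl_2^\ast\mmod\SL_2$. Next, introduce the Kostant section
$$\kappa\colon \AA^1(4) \cong \sl_2^\ast(2)\mmod\SL_2 \longrightarrow \Sym^3(\std)(4,2,0,-2), \qquad \delta \longmapsto \delta x^3 + 3 c_0 x y^2$$
for a suitable unit $c_0$. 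This is a \emph{graded} map, and in fact the only graded section: with the stated weights, the weight-$(-2)$ and weight-$(+2)$ components must vanish, the weight-$(-4)$ component must be a multiple of $\delta$, and the weight-$0$ component must be constant, which forces the shape above; and a direct computation gives the discriminant of $\kappa(\delta)$ equal to a unit multiple of $\delta$, so $\kappa$ is a section of the quotient map $\Sym^3(\std) \to \Sym^3(\std)\mmod\SL_2 \cong \AA^1$.

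The crucial point is that $\kappa(0) = 3 c_0 x y^2$ has one simple and one double root, hence lies in the $3$-dimensional orbit of forms with a double root, which is dense in the discriminant-zero locus. Using the classical $\SL_2$-orbit classification for binary cubic forms (see \cite{sato-kimura-prehomogeneous} and \cite[Section 4]{bhargava-composition-i}) --- the orbits other than the origin being partitioned by whether the discriminant is nonzero (total dimension $4$), the form has a double but not triple root (dimension $3$), or the form has a triple root (dimension $2$) --- one sees that $\SL_2\cdot\kappa(\AA^1)$ is an open dense subscheme of $\Sym^3(\std) = \AA^4$ whose complement, the triple-root orbit together with $\{0\}$, has dimension $2$ and hence codimension $2$. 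A limiting argument as $\delta \to 0$, exactly as in \cref{prop: stab of kostant}, comparing the stabilizer $\Z/3$ of $\kappa(\delta)$ for $\delta \ne 0$ with the trivial stabilizer of $3 c_0 x y^2$, identifies the stabilizer of the family $\kappa(\AA^1)$ with $\ld{J}[3]$, so that $\SL_2\cdot\kappa(\AA^1) \cong (\SL_2\times\sl_2^\ast(2)\mmod\SL_2)/\ld{J}[3]$. Since $\Sym^3(\std) = \AA^4$ is normal, Hartogs' theorem then gives a graded isomorphism $\co_{\ol{(\SL_2\times\sl_2^\ast(2)\mmod\SL_2)/\ld{J}[3]}} \cong \co_{\Sym^3(\std)(4,2,0,-2)}$, the grading matched by tracking it through $\kappa$; this is the analogue of condition (a) of \cite[Theorem 3.6.4]{ku-rel-langlands}.

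For (ii), \cref{lem: mult by n on homology} shows that the endomorphism $[3]$ of $\Omega\PGL_2 \simeq \PGL_2\ls{t}/\PGL_2\pw{t}$ is the one induced by $\cc\pw{t}\hookrightarrow\cc\pw{t^{1/3}}$ and acts by multiplication by $3$ on $\co_{\ld{J}}$, so the homological replacement for the regular-centralizer group scheme attached to $\PGL_2\hookrightarrow G_3$ is precisely $\ld{J}[3] = \ker([3]\colon\ld{J}\to\ld{J})$, matching (i). One then builds the functor by sending the structure sheaf of the spectral side to the analogue of $\IC_0$ on $G_3\ls{t}/\PGL_2\ls{t}$ and propagating under the $\Rep(\SL_2)$-action as in the proof of \cref{thm: main}; full faithfulness onto the subcategory so generated (essential surjectivity onto all constructible $G_3\pw{t}$-equivariant sheaves should fail, as the orbit stratification of $G_3\ls{t}/\PGL_2\ls{t}$ is richer than that of $G\ls{t}/H\ls{t}$ for a constant spherical pair) reduces to a matching of $\Ext$-algebras, i.e.\ to a Koszul-duality statement. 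That statement holds once the relevant equivariant-homology algebra is formal, and formality is automatic here since $\Sym^3(\std)$ is an affine space: the algebra has homotopy $\co_{\Sym^3(\std)(4,2,0,-2)}$, polynomial on even-weight classes, and carries an $\E{3}$-structure from factorization, hence is formal as in the remark following \cref{thm: main}. The main obstacle is the missing infrastructure behind step (ii): \cite[Theorem 3.6.4]{ku-rel-langlands} is developed only for constant groups, so one must first set up the loop-group geometry --- the relevant affine Grassmannian, orbit stratification, $\IC$-sheaves and their purity, and the precise description of the subcategory generated by $\IC_0$ --- for $G_n = \Res_{\cc\pw{t^{1/n}}/\cc\pw{t}}G$. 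A secondary subtlety is the non-flatness of $\ld{J}[3]$: the ``quotient'' $(\SL_2\times\sl_2^\ast(2)\mmod\SL_2)/\ld{J}[3]$ must be read as the dense orbit of $\kappa$, and one must check that forming its affine closure is compatible with the relevant (derived) invariant-theoretic constructions.
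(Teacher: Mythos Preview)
The statement you are trying to prove is recorded in the paper as a \emph{conjecture}, and the paper does not claim a proof. The paper explicitly notes (just before \cref{conj: langlands pairs of binary quadratic}) that \cite[Theorem 3.6.4]{ku-rel-langlands} does not apply because $G_3$ is not the base-change of a constant group scheme, and only says that the conjecture ``should'' follow from \cref{lem: mult by n on homology} together with the regular-centralizer identification. So your assessment of the main obstacle in step (ii)---the missing loop-group infrastructure for Weil-restricted groups---is exactly the gap the paper itself leaves open; there is nothing to compare your step (ii) against.

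What the paper \emph{does} establish is your step (i), as \cref{prop: regular locus binary cubic}, but by a different route. Rather than computing $\ld{J}[3]$ directly and then matching it against the stabilizer of $\kappa$ via an orbit classification and a limiting argument, the paper embeds $\Sym^3(\std) \hookrightarrow \std^{\otimes 3}$ as the locus of triply-symmetric cubes (so $ax^3 + 3bx^2y + 3cxy^2 + dy^3 \mapsto (a,\vec{b},d,\vec{c})$), notes that this is $\SL_2^{\mathrm{diag}}$-equivariant for $\SL_2^{\mathrm{diag}} \subseteq \SL_2^{\times 3}$, and then reads off the stabilizer as
\[
\ker(\ld{J}^{\times_{\AA^1} 3} \xrightarrow{\mathrm{prod}} \ld{J}) \cap (\SL_2^{\mathrm{diag}} \times \AA^1) \cong \ld{J}[3]
\]
directly from \cref{prop: stab of kostant}. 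This is cleaner than your approach: it avoids any separate analysis of $\ld{J}[3]$ or any limiting argument at $\delta \to 0$, and it automatically shows that the stabilizer lands inside $\ld{J}$ (a point your sketch elides---you compute the stabilizer of $\kappa(\delta)$ in $\SL_2$ as $\Z/3$ for $\delta \neq 0$, but do not explain why it sits in $\ld{J}$ rather than merely in $\SL_2$). Your direct approach has the virtue of being self-contained, but the paper's embedding trick exhibits the cubic case as a literal symmetrization of the cube case, which is conceptually the point.

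One small correction: your discussion of non-flatness of $\ld{J}[3]$ is not wrong but slightly misdirected. The paper does not worry about this; once the open $\SL_2$-orbit of $\kappa(\AA^1)$ is identified with the quotient and shown to have complement of codimension $\geq 2$, Hartogs gives the affine-closure statement regardless of whether the stabilizer is flat over the base.
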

%Unfortunately, \cite[Theorem 3.6.4]{ku-rel-langlands} does not apply in this situation, because $G$ is not the base-change of a constant group scheme over $\cc$. Nevertheless, we expect that \cref{conj: langlands binary cubics} is a consequence of \cref{lem: mult by n on homology} and \cref{prop: regular locus binary cubic} below; together, these results should show an analogue of the criteria of \cite[Theorem 3.6.4]{ku-rel-langlands} in the present situation.
Note that the example of $\SL_2$ acting on $\Sym^3(\std)$ does \textit{not} fit into the formalism of \cite{bzsv}, since it is not hyperspherical in the sense of \textit{loc. cit.} (see \cite[Example 5.1.10]{bzsv}).
Let us show that \cref{conj: langlands binary cubics} is indeed \cref{conj: langlands nonsplit tn} specialized to $n=3$.
\begin{prop}\label{prop: regular locus binary cubic}
    Let $V = \Sym^3(\std)$ denote the $4$-dimensional symplectic vector space of binary cubic forms, so that $V$ admits an action of $\SL_2$. Then:
    \begin{enumerate}
        \item Let $\Delta: V \to \AA^1$ denote the map sending a binary cubic form $f = ax^3 + 3bx^2 y + 3cxy^2 + dy^3$ to its discriminant
        $$\Delta(f) = a^2 d^2 - 6abcd - 3b^2c^2 + 4 (ac^3 + b^3d).$$
        Then $\Delta$ defines an isomorphism $V\mmod \SL_2 \cong \AA^1$.
        \item The closed immersion $\kappa: \AA^1 \to V$ sending $a \mapsto -\frac{a}{4} x^3 + 3xy^2$ defines a section of $\Delta$, and the $\SL_2$-orbit of the image of $\kappa$ has complement of codimension $\geq 2$.
        \item Identify $\AA^1 = \sl_2^\ast\mmod \SL_2$, let $\ld{J}$ denote the group scheme over $\sl_2^\ast\mmod \SL_2$ of regular centralizers for $\SL_2$, and let $\ld{J}[3]$ denote its $3$-torsion subgroup. Then there is an isomorphism
        $$\sl_2^\ast\mmod \SL_2 \times_{V/\SL_2} \sl_2^\ast\mmod \SL_2 \cong \ld{J}[3]$$
        of group schemes over $\sl_2^\ast\mmod \SL_2$.
        In particular, the affine closure of $(\SL_2 \times \sl_2^\ast\mmod \SL_2)/\ld{J}[3]$ is $\SL_2$-equivariantly isomorphic to $V$.
    \end{enumerate}
\end{prop}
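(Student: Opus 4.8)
The plan is to treat parts (1) and (2) by the classical $\SL_2$-invariant theory of binary cubics, and to deduce part (3) from \cref{prop: stab of kostant} by recognizing the Kostant section $\kappa$ for binary cubics as the restriction, along the diagonal $\SL_2\hookrightarrow\SL_2^{\times 3}$, of the Kostant section for $2\times 2\times 2$ cubes; the affine-closure assertion at the end of (3) then follows by algebraic Hartogs exactly as in the proof of \cref{thm: main}. Throughout I identify $\AA^1=\sl_2^\ast\mmod\SL_2$ using the discriminant coordinate.

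For (1), the identity $\ringcoeff[V]^{\SL_2}=\ringcoeff[\Delta]$ is classical (see \cite{springer-linear-algebraic-groups}); concretely, $\Delta$ is $\SL_2$-invariant (it transforms by $\det^{6}$ under $\GL_2$) and non-constant, each fibre $\Delta^{-1}(\lambda)$ with $\lambda\neq 0$ is a single $\SL_2$-orbit (a cubic with $\Delta\neq 0$ has three distinct roots, so is $\GL_2$-equivalent to a fixed cubic up to scalar, and the residual $\mu_4$ scaling ambiguity is realized inside $\SL_2$ by a lift of the root permutation $z\mapsto 1/z$), and $V\mmod\SL_2$ is a normal affine curve; hence the induced map $V\mmod\SL_2\to\AA^1$ is a bijective morphism of normal curves, so an isomorphism. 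For (2), substituting the coefficients of $-\tfrac{a}{4}x^3+3xy^2$ into the formula for $\Delta$ gives $\Delta(\kappa(a))=-a$, so $\kappa$ is a section of $\Delta$ up to the unit $-1$. For the codimension statement, $\kappa(a)$ has three distinct roots for $a\neq 0$ and one double plus one simple root for $a=0$; using (1) and the classification of $\SL_2$-orbits on $V$ by root multiplicity (the open locus $\{\Delta\neq 0\}$, the single three-dimensional orbit of cubics with one double and one simple root, the two-dimensional cone of perfect cubes, and the origin) one checks that $\SL_2\cdot\kappa(\AA^1)$ is exactly the complement of the cone of perfect cubes, which has codimension $2$ in $V\cong\AA^4$.

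For (3), the point is that under the symmetrization inclusion $\Sym^3(\std)\hookrightarrow\std^{\otimes 3}$ the binary cubic $a_0x^3+3bx^2y+3cxy^2+dy^3$ corresponds to the symmetric cube $(a_0,(b,b,b),d,(c,c,c))$ in the notation of \cref{def: cube and quadratic forms}, under which the section $\kappa(s)=-\tfrac{s}{4}x^3+3xy^2$ becomes the cube $(-s/4,\vec{0},0,\vec{1})$, i.e.\ the Kostant section of \cref{cstr: kostant} evaluated at $-s/4$. Since $\Delta$ is $\SL_2$-invariant and $\Delta(\kappa(s))=-s$, the fibre product $\sl_2^\ast\mmod\SL_2\times_{V/\SL_2}\sl_2^\ast\mmod\SL_2$ is supported over the diagonal and is identified with the stabilizer group scheme $\Stab_{\SL_2}(\kappa(-))$ over $\AA^1$; and a triple $(g,g,g)$ lies in the cube stabilizer computed in \cref{prop: stab of kostant}, namely $\{(h(\alpha_1),h(\alpha_2),h(\alpha_3)):\alpha_1\alpha_2\alpha_3=1\}$ with $h\colon\ld{J}\hookrightarrow\SL_2$ the natural closed immersion, if and only if $g=h(\alpha)$ for a (necessarily unique) $\alpha$ with $\alpha^3=1$. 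Hence $\Stab_{\SL_2}(\kappa(s))=\{h(\alpha):\alpha^3=1\}=\ld{J}[3]$, giving the asserted isomorphism. Finally, the $\SL_2$-equivariant orbit map $\SL_2\times\AA^1\to V$, $(g,s)\mapsto g\cdot\kappa(s)$, has image the open subscheme $\SL_2\cdot\kappa(\AA^1)$ and $\ld{J}[3]$-orbits as fibres, so it descends to an isomorphism $(\SL_2\times\sl_2^\ast\mmod\SL_2)/\ld{J}[3]\xrightarrow{\ \sim\ }\SL_2\cdot\kappa(\AA^1)$; since $V$ is smooth and the complement has codimension $2$, algebraic Hartogs gives $\co(\SL_2\cdot\kappa(\AA^1))\cong\co_V$, so the affine closure of $(\SL_2\times\sl_2^\ast\mmod\SL_2)/\ld{J}[3]$ is $\SL_2$-equivariantly $V$.

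The main obstacle, exactly as in \cref{prop: stab of kostant}, is to promote these identifications from geometric points to group schemes over all of $\AA^1$ --- in particular over the degenerate parameter $s=0$, where $\ld{J}[3]$ collapses to the trivial group scheme, as does $\Stab_{\SL_2}(\kappa(0))$. One handles this by a limiting argument as $s\to 0$, using that both group schemes are finite \'etale (hence flat) over $\GG_m\subseteq\AA^1$ and reduced along $s=0$. Relatedly, because $\ld{J}[3]$ is not flat over $\AA^1$, the quotient $(\SL_2\times\sl_2^\ast\mmod\SL_2)/\ld{J}[3]$ must be read as $\spec$ of the ring of $\ld{J}[3]$-invariants, and one must verify that the descended orbit map is an isomorphism of schemes onto $\SL_2\cdot\kappa(\AA^1)$, not merely a bijection; this is the step demanding the most care, but it can be deduced from the analogous statement for cubes underlying \cref{prop: stab of kostant}.
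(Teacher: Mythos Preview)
Your proof is correct and follows essentially the same route as the paper: for part (3) you embed $\Sym^3(\std)$ into $\std^{\otimes 3}$ as triply-symmetric cubes, identify the cubic Kostant section with the cube Kostant section of \cref{cstr: kostant} (up to a unit rescaling of the base), and then intersect the stabilizer from \cref{prop: stab of kostant} with the diagonal $\SL_2$ to obtain $\ld{J}[3]$. Your treatment of (1) and (2) is more explicit than the paper's (which simply cites \cite{vinberg-popov-invariant}), and your closing discussion of the limiting argument at $s=0$ and the non-flatness of $\ld{J}[3]$ flags genuine subtleties that the paper glosses over; but the core argument is identical.
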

\begin{proof}
    The first statement is in \cite[Section 0.12]{vinberg-popov-invariant}, and the second statement can be deduced similarly.
    For the final statement, recall as in \cite{bhargava-composition-i} that there is a closed immersion $V \subseteq \std^{\otimes 3}$ given by $ax^3 + 3bx^2 y + 3cxy^2 + dy^3 \mapsto (a, \vec{b}, d, \vec{c})$. This corresponds to the triply-symmetric cube
    $$\xymatrix@=.75em{
    & b \ar@{-}[rr] \ar@{-}'[d][dd] && c \ar@{-}[dd] \\
    a \ar@{-}[ru] \ar@{-}[rr] \ar@{-}[dd] && b \ar@{-}[ru] \ar@{-}[dd] & \\
    & c \ar@{-}'[r][rr] && d. \\
    b \ar@{-}[ru] \ar@{-}[rr] && c \ar@{-}[ru] &
    }$$
    The above embedding is $\SL_2$-equivariant for the natural action on $V$ and the diagonally embedded $\SL_2^\mathrm{diag} \subseteq \SL_2^{\times 3} = \ld{G}$ acting on $\std^{\otimes 3}$. Moreover, the composite
    $$V \subseteq \std^{\otimes 3} \xar{\det} \sl_2^\ast\mmod \SL_2 \cong \AA^1$$
    sends $f\mapsto \Delta(f)$. This implies that $\sl_2^\ast\mmod \SL_2 \times_{V/\SL_2} \sl_2^\ast\mmod \SL_2$ can be identified with the intersection $\sl_2^\ast\mmod \SL_2 \times_{\std^{\otimes 3}/\ld{G}} \sl_2^\ast\mmod \SL_2$ with the diagonally embedded $\SL_2^\mathrm{diag} \times \sl_2^\ast\mmod \SL_2 \subseteq \SL_2^{\times 3} \times \sl_2^\ast\mmod \SL_2$. By \cref{prop: stab of kostant}, we find that
    \begin{align*}
        \sl_2^\ast\mmod \SL_2 \times_{V/\SL_2} \sl_2^\ast\mmod \SL_2 & \cong \ker(\ld{J} \times_{\sl_2^\ast\mmod \SL_2} \ld{J} \times_{\sl_2^\ast\mmod \SL_2} \ld{J} \xar{\mathrm{prod}} \ld{J}) \cap (\SL_2^\mathrm{diag} \times \sl_2^\ast\mmod \SL_2) \\
        & \cong \ld{J}[3].
    \end{align*}
    The claim about the affine closure of $(\SL_2 \times \sl_2^\ast\mmod \SL_2)/\ld{J}[3]$ follows from (b).
\end{proof}
\begin{remark}
    If we identify $\Sym^3(\std)$ with the affine closure of $(\SL_2 \times \sl_2^\ast\mmod \SL_2)/\ld{J}[3]$ via \cref{prop: regular locus binary cubic}, the inclusion $\ld{J}[3] \subseteq \ld{J}$ along with the identification of $\sl_2^\ast$ with the affine closure of $(\SL_2 \times \sl_2^\ast\mmod \SL_2)/\ld{J}$ gives an $\SL_2$-equivariant map $\Sym^3(\std) \to \sl_2^\ast \cong \Sym^2(\std)$. (Note that this map is quadratic, not linear.) This map is very classical: it produces the \textit{quadratic resolvent} of a binary cubic form. Explicitly, it sends $f = ax^3 + 3bx^2 y + 3cxy^2 + dy^3$ to the binary quadratic form
    $$q = (ac - b^2) x^2 + (ad - bc) xy + (bd - c^2) y^2.$$
    Said differently, $\Sym^3(\std)$ admits the structure of a Hamiltonian $\SL_2$-space, and the moment map $\Sym^3(\std) \to \sl_2^\ast$ for this $\SL_2$-action can be identified with the quadratic resolvent construction. Moreover, this moment map is $\SL_2(-2\rho) \rtimes \GG_m$-equivariant for the grading on $\Sym^3(\std)$ described in \cref{conj: langlands binary cubics} and the $(2-2\rho)$-grading on $\sl_2^\ast$.

    There is an action of $\Shv_{G_3\pw{t} \times G_3\pw{t}}^{c,\Sat}(G_3\ls{t}; \ringcoeff)$ on $\Shv_{G_3\pw{t}}^{c}(G_3\ls{t}/\PGL_2\ls{t}; \ringcoeff)$ by convolution. \cref{thm: bf derived satake} allows us to identify $\Shv_{G_3\pw{t} \times G_3\pw{t}}^{c,\Sat}(G_3\ls{t}; \ringcoeff)$ with $\Perf^\sh(\sl_2^\ast(2-2\rho)/\SL_2(-2\rho))$ as monoidal categories. Under \cref{conj: langlands binary cubics}, the resulting action of $\Perf^\sh(\sl_2^\ast(2-2\rho)/\SL_2(-2\rho))$ on $\Shv_{G_3\pw{t}}^{c}(G_3\ls{t}/\PGL_2\ls{t}; \ringcoeff)$ should preserve the full subcategory $\Perf^\sh(\Sym^3(\std)(4,2,0,-2)/\SL_2(-2\rho))$. The action on this full subcategory should be given by the moment map/quadratic resolvent $\Sym^3(\std) \to \sl_2^\ast$.
\end{remark}
\begin{remark}\label{rmk: binary quartics}
    What about binary forms of higher degree? Let $V$ denote the $5$-dimensional affine space of binary quartic forms; the action of $\SL_2$ on $V$ descends to an action of $\PGL_2$.
    \begin{enumerate}
        \item Let $\pi: V \to \AA^2$ denote the map sending a binary quartic form $f = ax^4 + 4bx^3 y + 6cx^2y^2 + 4dxy^3 + ey^4$ to the invariants
        \begin{align*}
            I & = ae - 4bd + 3c^2, \\
            J & = ace + 2 bcd - ad^2 - b^2e - c^3.
        \end{align*}
        Then $\pi$ defines an isomorphism $V\mmod \PGL_2 \cong \AA^2$. 
        \item The closed immersion $\kappa: \AA^2 \to V$ sending $(a,b) \mapsto 4x^3 y + dxy^3 + ey^4$ defines a section of $\pi$, and the $\PGL_2$-orbit of the image of $\kappa$ has complement of codimension $\geq 2$. In fact, the $\PGL_2$-orbit consists of those binary quartic forms with at least one root of multiplicity $1$.
        \item Let $\ce$ denote the elliptic curve over $\AA^2 = \spec \ringcoeff[d, e]$ given by $y^2 = x^3 + dx + e$, and let $\ce[2]$ denote its $2$-torsion subgroup. Then there is an isomorphism
        $$\AA^2 \times_{V/\PGL_2} \AA^2 \cong \ce[2]$$
        of group schemes over $\AA^2$.
        In particular, the affine closure of $(\PGL_2 \times \AA^2)/\ce[2]$ is $\PGL_2$-equivariantly isomorphic to $V$. (Tantalizingly, the coordinates $d$ and $e$ live in weights $4$ and $6$, and so the base $\AA^2$ can actually be identified with $\spec \H^\ast_{\SL_3}(\ast; \ringcoeff)$.)
    \end{enumerate}
    Once one knows the formulas, (a) and (b) are not difficult calculations (see \cite{vinberg-popov-invariant}), and part (c) can be proved as in \cite[Sections 3-5]{cremona-fisher} and \cite[Theorem 3.2]{bhargava-shankar}.
    %4.2.1 file:///C:/Users/skdev/Dropbox/My%20PC%20(DESKTOP-C86A4TM)/Documents/papers/ho%20le%20hung%20ngo%20Average%20size%20of%202-Selmer%20groups.pdf
    Finally, note that if $V = \Sym^j(\AA^2)$ denote the $(j+1)$-dimensional affine space of binary $j$-forms, so that $V$ admits an action of $\SL_2$, the invariant-theoretic quotient $V\mmod \SL_2$ is \textit{not} an affine space if $j\geq 5$ (see \cite[Example 1 in Section 8.2]{vinberg-popov-invariant}).
\end{remark}
The poset of $\SL_2$-orbit closures in $\PP(\Sym^3(\std))$ is shown in \cref{fig: orbits Sym3}.
% https://q.uiver.app/#q=WzAsMyxbMCwwLCJcXFBQKFxcU3ltXjMoXFxzdGQpKSJdLFsxLDAsIlxce1xcRGVsdGEgPSAwXFx9Il0sWzIsMCwiXFxQUF4xIl0sWzAsMSwiIiwwLHsic3R5bGUiOnsiaGVhZCI6eyJuYW1lIjoibm9uZSJ9fX1dLFsxLDIsIiIsMCx7InN0eWxlIjp7ImhlYWQiOnsibmFtZSI6Im5vbmUifX19XV0=
\begin{figure}[H]
\adjustbox{scale=1,center}{%
\begin{tikzcd}
{\PP(\Sym^3(\std))} & {\{\Delta = 0\}} & {\PP^1}
\arrow[no head, from=1-1, to=1-2]
\arrow[no head, from=1-2, to=1-3]
\end{tikzcd}
}
\captionsetup{width=\linewidth}
\caption[]{$\SL_2$-orbit closures on $\Sym^3(\std)$, connected by closure. The inclusion $\PP^1 \hookrightarrow \PP^3$ is the embedding of the twisted cubic, and the vanishing locus of $\Delta$ is the dual variety/tangent developable of the twisted cubic.}
\label{fig: orbits Sym3}
\end{figure}

Before proceeding to \cref{conj: langlands nonsplit tn} for higher $n$, let us note the following modification of the cubic case.
Instead of considering $\PGL_2\pw{t}$ as a subgroup of $\Res_{\cc\pw{t^{1/3}}/\cc\pw{t}} \PGL_2$, one could also consider the Weil restriction $\Res_{\cc\pw{t^{1/2}} \times \cc\pw{t}/\cc\pw{t}} \PGL_2$ along the extension $\cc\pw{t} \subseteq \cc\pw{t^{1/2}} \times \cc\pw{t}$. For this, one obtains the following variant of \cref{thm: main}.
\begin{conjecture}\label{conj: langlands pairs of binary quadratic}
    Let $(\std \otimes \sl_2^\ast)(4,2,0,-2)$ denote the graded vector space of pairs of binary quadratic forms, where the coefficients of a pair $(q_1, q_2) = (ax^2 + 2bxy + cy^2, dx^2 + 2exy + fy^2)$ have the following weights: $a$ lives in weight $-4$, $b$ lives in weight $-2$, $c$ lives in weight $0$, $d$ lives in weight $-2$, $e$ lives in weight $0$, and $f$ lives in weight $2$. In other words, $(\std \otimes \sl_2^\ast)(4,2,0,-2) \cong \AA^2(2,0) \otimes \sl_2^\ast(-2\rho)$.
    Let $G = \Res_{\cc\pw{t^{1/2}} \times \cc\pw{t}/\cc\pw{t}} \PGL_2$. 
    Then there is a fully faithful functor
    $$ \Perf^\sh((\std \otimes \sl_2^\ast)(4,2,0,-2)/\SL_2(-2\rho)^{\times 2}) \hookrightarrow \Shv_{G\pw{t}}^{c}(G\ls{t}/\PGL_2\ls{t}; \ringcoeff).$$
\end{conjecture}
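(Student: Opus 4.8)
The plan is to adapt, essentially verbatim, the argument of \cref{thm: main} (via \cref{prop: stab of kostant}) and its binary--cubic shadow \cref{prop: regular locus binary cubic}, now using cubes that are symmetric in the last two tensor factors in place of triply--symmetric ones.

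First I would set up the spectral side. The $\SL_2$-equivariant inclusion $\sl_2^\ast = \Sym^2(\std)\hookrightarrow\std^{\otimes 2}$ of the symmetric part induces a closed immersion $\std\otimes\sl_2^\ast\hookrightarrow\std^{\otimes 3}$ equivariant for the homomorphism $\SL_2\times\SL_2\to\SL_2\times\SL_2^\mathrm{diag}\subseteq\SL_2^{\times 3}$; in cube coordinates a pair of binary quadratics $(q_1,q_2)$ is sent to the cube $(a,(b_1,b,b),c,(d_1,d,d))$. As in \cref{prop: regular locus binary cubic}, the composite $\std\otimes\sl_2^\ast\hookrightarrow\std^{\otimes 3}\xar{\det}\sl_2^\ast\mmod\SL_2\cong\AA^1$ is the invariant-quotient map for the $\SL_2^{\times 2}$-action --- this is the $n=3$ instance of \cref{rmk: SOn more general} after restricting along $\SL_2\to\SO_3$, and can also be read off directly from \cref{eq: common disc} --- and restricting the Kostant section $\kappa$ of \cref{cstr: kostant} along $\SL_2\times\SL_2\to\SL_2\times\SL_2^\mathrm{diag}$ produces a section $\kappa\colon\AA^1\to\std\otimes\sl_2^\ast$. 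Intersecting the stabilizer computation of \cref{prop: stab of kostant} with the symmetric locus (i.e.\ imposing $g_2=g_3$, which forces $\alpha_2=\alpha_3=:\alpha$ and $\alpha_1=\alpha^{-2}$) then yields
$$\ld{J}_X:=\sl_2^\ast\mmod\SL_2\times_{(\std\otimes\sl_2^\ast)/\SL_2^{\times 2}}\sl_2^\ast\mmod\SL_2\;\cong\;\{(g^{-2},g):g\in\ld{J}\}\;\cong\;\ld{J},$$
a ``$[2]$-twisted diagonal'' copy of $\ld{J}$ inside $\SL_2\times\SL_2$, the exponent $-2$ being the square of the Chevalley involution, exactly as $n$ enters in \cref{conj: langlands nonsplit tn}. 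Using \cref{prop: G-orbit kostant codim} and \cref{rmk: G-orbit segre} one checks that the $\SL_2^{\times 2}$-orbit of $\kappa(\AA^1)$ is dense open with complement of codimension $\geq 2$ (its complement is $\det^{-1}(0)$ minus the orbit of $\kappa(0)$), so by the algebraic Hartogs theorem $\ol{(\SL_2^{\times 2}\times\sl_2^\ast\mmod\SL_2)/\ld{J}_X}$, as a graded Hamiltonian $\SL_2^{\times 2}$-variety, is $(\std\otimes\sl_2^\ast)(4,2,0,-2)$; its moment map $\std\otimes\sl_2^\ast\to\sl_2^{\ast,\times 2}$ is the restriction of Bhargava's $\mu$.

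On the automorphic side, write $G=G_2\times\PGL_2$ with $G_2=\Res_{\cc\pw{t^{1/2}}/\cc\pw{t}}\PGL_2$, so that $\PGL_2\ls{t}$ sits inside $G\ls{t}=\PGL_2\ls{t^{1/2}}\times\PGL_2\ls{t}$ via $(j,\id)$. By \cref{lem: mult by n on homology} the inclusion $j\colon\PGL_2\ls{t}\hookrightarrow\PGL_2\ls{t^{1/2}}$ realizes $[2]\colon\Omega\PGL_2\to\Omega\PGL_2$, so on $\spec\H^{\PGL_2}_\ast(\Omega-)$ the diagonal $\PGL_2\hookrightarrow G$ becomes $\ld{J}\to\ld{J}\times_{\sl_2^\ast\mmod\SL_2}\ld{J}$, $g\mapsto(g^2,g)$; the fiber-sequence argument of \cref{thm: main} then identifies the regular-centralizer group scheme of the pair with $\coker(g\mapsto(g^2,g))\cong\ld{J}$, matching $\ld{J}_X$ up to the Chevalley twist. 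The remaining step --- and this is where I expect the real difficulty to lie --- is to upgrade this numerical coincidence into the claimed fully faithful functor. Since $G$ is not the base change of a constant group scheme over $\cc$, \cite[Theorem 3.6.4]{ku-rel-langlands} does not apply directly; one needs a version of the Koszul-duality argument of \cite[Section 3.2]{ku-rel-langlands} valid for Weil restrictions along ramified covers of the disk. Formality of the relevant $\E{2}$-algebra, at least, should be free: the computation above shows its homotopy is the polynomial ring $\co_{\std\otimes\sl_2^\ast}$ on classes in even weights, so \cite[Lemma 2.1.9]{ku-rel-langlands} applies once the $\E{3}$-structure (from factorization over the $t^{1/2}$-disk) is in place. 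Granting this, compatibility with the spherical Hecke action --- identified via \cref{thm: bf derived satake} with $\Perf^\sh(\sl_2^{\ast,\times 2}(2-2\rho)/\SL_2(-2\rho)^{\times 2})$ --- follows from the moment-map description above, exactly as in \cref{rmk: sympl on C2 cubed}.
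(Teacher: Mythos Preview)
The statement you are trying to prove is labeled a \emph{conjecture} in the paper, and the paper does not prove it. Immediately after stating it, the author writes that \cite[Theorem 3.6.4]{ku-rel-langlands} does not apply because $G$ is not the base-change of a constant group scheme, and only offers the supporting computation \cref{prop: reg centr pairs of binary quadratic} as evidence. So there is no ``paper's own proof'' to compare against; the honest thing is to say the result remains open.

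That said, your write-up tracks the paper's heuristic argument very closely. Your spectral-side analysis via doubly-symmetric cubes and intersection with \cref{prop: stab of kostant} is exactly the content of \cref{prop: reg centr pairs of binary quadratic}, and your identification of $\ld{J}_X$ with the $[2]$-twisted diagonal copy of $\ld{J}$ matches the paper's part (c). Your automorphic-side sketch via \cref{lem: mult by n on homology} and the fiber-sequence argument is also what the paper has in mind. Most importantly, you correctly isolate the genuine obstruction: one needs a variant of \cite[Theorem 3.6.4]{ku-rel-langlands} valid for Weil restrictions along ramified covers of the disk, and no such result is available. This is precisely why the paper leaves the statement as a conjecture.

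One small point: your justification of the codimension-$\geq 2$ claim by invoking \cref{prop: G-orbit kostant codim} and \cref{rmk: G-orbit segre} is not quite right, since those describe $\SL_2^{\times 3}$-orbits on $\std^{\otimes 3}$, not $\SL_2^{\times 2}$-orbits on the subspace $\std\otimes\sl_2^\ast$. The paper handles this separately (see \cref{fig: orbits std tensor sl2}), and the correct codimension is $2$, not $3$ as in the full cube case; the extra $\SL_2^{\times 2}$-orbit closure $\PP^1\times\PP^2\hookrightarrow\PP^5$ accounts for the difference.
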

Again, \cite[Theorem 3.6.4]{ku-rel-langlands} does not apply in this situation, because $G$ is not the base-change to $\cc\ls{t}$ of a constant group scheme over $\cc$. Nevertheless, we expect that \cref{conj: langlands binary cubics} is a consequence of \cref{lem: mult by n on homology} and \cref{prop: reg centr pairs of binary quadratic} below; together, these results should give an analogue of the criteria of \cite[Theorem 3.6.4]{ku-rel-langlands}.
\begin{prop}\label{prop: reg centr pairs of binary quadratic}
    Let $V = \std \otimes \sl_2^\ast$, equipped with an action of $\SL_2 \times \SL_2$ via the $\SL_2$-actions on $\std$ and $\sl_2^\ast$. Then:
    \begin{enumerate}
        \item Let $\Delta: V \to \AA^1$ denote the map sending a pair of binary quadratic forms $(q_1, q_2) = (ax^2 + 2bxy + cy^2, dx^2 + 2exy + fy^2)$ to the function
        $$\Delta(q_1, q_2) = a^2 f^2 + c^2 d^2 - 2 acdf + 4 (ae - bd)(ce - bf).$$
        Then $\Delta$ defines an isomorphism $V\mmod (\SL_2 \times \SL_2) \cong \AA^1$.
        \item The closed immersion $\kappa: \AA^1 \to V$ sending $a \mapsto (\frac{a}{4}x^2 + y^2, 2xy)$ defines a section of $\Delta$, and the $\SL_2 \times \SL_2$-orbit of the image of $\kappa$ has complement of codimension $\geq 2$.
        \item Identify $\AA^1 = \sl_2^\ast\mmod \SL_2$, let $\ld{J}$ denote the group scheme over $\sl_2^\ast\mmod \SL_2$ of regular centralizers for $\SL_2$, and define the embedding
        $$\ld{J} \hookrightarrow \SL_2 \times \SL_2 \times \sl_2^\ast\mmod \SL_2, \ g \mapsto (g^{-2}, g).$$
        Note that this is indeed a homomorphism since $\ld{J}$ is \textit{commutative}.
        Then there is an isomorphism
        $$\sl_2^\ast\mmod \SL_2 \times_{V/(\SL_2 \times \SL_2)} \sl_2^\ast\mmod \SL_2 \cong \ld{J} \subseteq \SL_2 \times \SL_2 \times \sl_2^\ast\mmod \SL_2$$
        of group schemes over $\sl_2^\ast\mmod \SL_2$.
        In particular, the affine closure of $(\SL_2 \times \SL_2 \times \sl_2^\ast\mmod \SL_2)/\ld{J}$ is $\SL_2 \times \SL_2$-equivariantly isomorphic to $V$.
    \end{enumerate}
\end{prop}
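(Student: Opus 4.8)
The plan is to run the same argument as for \cref{prop: regular locus binary cubic}, exploiting the isomorphism of $\SL_2$-representations $\sl_2^\ast \cong \Sym^2(\std)$ to realize $V = \std \otimes \sl_2^\ast$ as a linear subspace of $\std^{\otimes 3}$ and then restrict the computations of \cref{def: cube and quadratic forms} and \cref{prop: stab of kostant}. Concretely, symmetrizing in the last two tensor factors gives an $\SL_2 \times \SL_2$-equivariant linear embedding $\iota \colon \std \otimes \sl_2^\ast \hookrightarrow \std^{\otimes 3}$, identifying $V$ with the subspace of cubes invariant under swapping the second and third slots (so $b_2 = b_3$ and $d_2 = d_3$), where $\SL_2 \times \SL_2 \hookrightarrow \ld{G} = \SL_2^{\times 3}$ is the subgroup $(g,h) \mapsto (g,h,h)$; indeed, on the last two factors $\std \otimes \std = \Sym^2(\std) \oplus \wedge^2(\std) = \sl_2^\ast \oplus \triv$, so the symmetric part is exactly $\sl_2^\ast$. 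A short check of bases shows that $\iota \circ \kappa$ is (up to the reparametrization $a \mapsto a/4$) literally the cube Kostant slice of \cref{cstr: kostant}, i.e. $\iota(\kappa(a)) = (a/4, \vec{0}, 0, \vec{1})$.

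First I would prove (a). By direct substitution into \cref{eq: common disc} one gets $\det \circ \iota = \Delta$ up to a unit (this is what pins down the normalization of $\Delta$ in the statement), and in particular $\Delta(\kappa(a)) = a$ up to a unit, so $\kappa$ is a section of $\Delta$ -- this also disposes of the ``section'' half of (b). Since $\std \otimes \sl_2^\ast \cong \std_2 \otimes \std_3$ as a symplectic representation of $\SL_2 \times \SO_3 = \SL_2 \times \PGL_2$ (the $\mu_2$-kernel acting trivially, so the $\SL_2 \times \SL_2$-invariants agree with the $\SL_2 \times \SO_3$-invariants), the identity $(\std \otimes \std_3) \mmod (\SL_2 \times \SO_3) \cong \AA^1 \mmod (\Z/2) \cong \AA^1$ -- the $n=3$ instance of \cref{rmk: SOn more general}, classical by \cite{sato-kimura-prehomogeneous, vinberg-popov-invariant} -- shows $\Delta$ generates the invariant ring, giving (a).

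Next I would prove (c), which is the heart of the matter, and then feed it back into (b). Because $\det \circ \iota = \Delta$ and $\iota \circ \kappa$ is the cube Kostant slice, the fiber product $\sl_2^\ast \mmod \SL_2 \times_{V/(\SL_2 \times \SL_2)} \sl_2^\ast \mmod \SL_2$ is the subgroup scheme of $\SL_2 \times \SL_2 \times \sl_2^\ast \mmod \SL_2$ cut out inside $\SL_2^{\times 3} \times \sl_2^\ast \mmod \SL_2$ by (i) lying in the stabilizer group scheme of $\kappa(\AA^1) \subseteq \std^{\otimes 3}$ from \cref{prop: stab of kostant}, i.e. $\ker(\ld{J} \times_{\sl_2^\ast \mmod \SL_2} \ld{J} \times_{\sl_2^\ast \mmod \SL_2} \ld{J} \xar{\mathrm{prod}} \ld{J})$, and (ii) lying in the subgroup $\SL_2 \times \SL_2^{\diag} \times \sl_2^\ast \mmod \SL_2$, i.e. $\alpha_2 = \alpha_3$ in the notation of the proof of \cref{prop: stab of kostant}. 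The conditions $\alpha_1 \alpha_2 \alpha_3 = 1$ and $\alpha_2 = \alpha_3$ force $\alpha_1 = \alpha^{-2}$, $\alpha_2 = \alpha_3 = \alpha$, so the fiber product is identified with $\ld{J}$ via $g \mapsto (g^{-2}, g)$ (a homomorphism since $\ld{J}$ is commutative); as in \cref{prop: stab of kostant} a limiting argument at $a \to 0$ promotes this to an identification over all of $\sl_2^\ast \mmod \SL_2$. Specializing at $a = 0$ gives that the stabilizer of $\kappa(0)$ in $\SL_2 \times \SL_2$ is $1$-dimensional, hence its orbit is $6 - 1 = 5$-dimensional and therefore dense in the $5$-dimensional null cone $\{\det = 0\}$; since $\kappa(\GG_m)$ meets the open orbit $\{\det \neq 0\}$, the complement of $\SL_2 \times \SL_2 \cdot \kappa(\AA^1)$ in $V \cong \AA^6$ has dimension $\leq 4$, proving the codimension claim in (b). Finally, $V$ is smooth (in particular normal and $S_2$) and $(\SL_2 \times \SL_2 \times \sl_2^\ast \mmod \SL_2)/\ld{J}$ is the open orbit $\SL_2 \times \SL_2 \cdot \kappa(\AA^1)$ with complement of codimension $\geq 2$, so algebraic Hartogs (exactly as in the proof of \cref{thm: main}) yields $\co_{(\SL_2 \times \SL_2 \times \sl_2^\ast \mmod \SL_2)/\ld{J}} \cong \co_V$; hence the affine closure of this quotient is $V$.

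The main obstacle I anticipate is purely bookkeeping: fixing the embedding $\iota$ and the normalization constants so that $\det \circ \iota$ is exactly the $\Delta$ of the statement and $\iota \circ \kappa$ is exactly (a reparametrization of) the cube Kostant slice of \cref{cstr: kostant}, and checking carefully that intersecting the stabilizer group scheme of \cref{prop: stab of kostant} with the diagonal subgroup behaves well in the $a \to 0$ limit. The conceptual route -- reduce everything to cubes and invoke \cref{def: cube and quadratic forms}, \cref{lem: cayley}, and \cref{prop: stab of kostant} -- is otherwise forced by the structure of the cubic case.
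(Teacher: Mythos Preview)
Your proposal is correct and follows essentially the same route as the paper's proof sketch: embed $V$ into $\std^{\otimes 3}$ as doubly-symmetric cubes via $\SL_2 \times \SL_2 \hookrightarrow \SL_2^{\times 3}$, $(g,h)\mapsto (g,h,h)$, identify $\Delta$ with the restriction of the Cayley hyperdeterminant, observe that the pair-Kostant slice lands on (a rescaling of) the cube Kostant slice of \cref{cstr: kostant}, and then read off the stabilizer by intersecting the group scheme of \cref{prop: stab of kostant} with the image of $\SL_2 \times \SL_2$. Your codimension argument for (b) and your appeal to \cref{rmk: SOn more general} for (a) are slightly more explicit than the paper's bare pointer to \cref{prop: regular locus binary cubic}, but the two arguments are otherwise the same.
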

\begin{proof}[Proof sketch]
    These statements follow exactly as in \cref{prop: regular locus binary cubic}. Indeed, as described in \cite{bhargava-composition-i}, there is a closed immersion $V \subseteq \std^{\otimes 3}$ given by 
    $$(ax^2 + 2bxy + cy^2, dx^2 + 2exy + fy^2) \mapsto (a, (b, d, b), f, (e,c,e)).$$
    This corresponds to the doubly-symmetric cube
    $$\xymatrix@=.75em{
    & d \ar@{-}[rr] \ar@{-}'[d][dd] && e \ar@{-}[dd] \\
    a \ar@{-}[ru] \ar@{-}[rr] \ar@{-}[dd] && b \ar@{-}[ru] \ar@{-}[dd] & \\
    & e \ar@{-}'[r][rr] && f. \\
    b \ar@{-}[ru] \ar@{-}[rr] && c \ar@{-}[ru] &
    }$$
    The above morphism is $\SL_2 \times \SL_2$-equivariant for the natural action on $V$ and the embedding 
    $$\iota: \SL_2 \times \SL_2 \subseteq \SL_2^{\times 3} = \ld{G}, \ (g, h) \mapsto (g, h, h).$$
    One can check that the composite
    $$V \subseteq \std^{\otimes 3} \xar{\det} \sl_2^\ast\mmod \SL_2 \cong \AA^1$$
    sends $(q_1, q_2) \mapsto \Delta(q_1, q_2)$. Finally, as in \cref{prop: regular locus binary cubic}, we find that
    \begin{align*}
        \hspace*{-1cm}
        \sl_2^\ast\mmod \SL_2 \times_{V/(\SL_2 \times \SL_2)} \sl_2^\ast\mmod \SL_2 & \cong \ker(\ld{J} \times_{\sl_2^\ast\mmod \SL_2} \ld{J} \times_{\sl_2^\ast\mmod \SL_2} \ld{J} \xar{\mathrm{prod}} \ld{J}) \cap (\iota(\SL_2 \times \SL_2) \times \sl_2^\ast\mmod \SL_2) \\
        & \cong \ker(\ld{J} \times_{\sl_2^\ast\mmod \SL_2} \ld{J} \xar{(g,h)\mapsto gh^2} \ld{J}) \\
        & \cong \ld{J},
    \end{align*}
    where $\ld{J}$ is a subgroup of $(\iota(\SL_2 \times \SL_2) \times \sl_2^\ast\mmod \SL_2)$ via $g \mapsto (g^{-2}, g)$, as desired.
\end{proof}
\begin{remark}
    The inclusion $\ld{J} \hookrightarrow \SL_2 \times \SL_2 \times \sl_2^\ast\mmod \SL_2$ from \cref{prop: reg centr pairs of binary quadratic}(c) can be alternatively described as the composite inclusion
    $$\ld{J} \cong \ker(\ld{J} \times_{\sl_2^\ast\mmod \SL_2} \ld{J} \xar{\id \times [2]^\ast} \ld{J}) \hookrightarrow \ld{J} \times_{\sl_2^\ast\mmod \SL_2} \ld{J} \hookrightarrow \SL_2 \times \SL_2 \times \sl_2^\ast\mmod \SL_2.$$
\end{remark}
    The poset of $\SL_2 \times \SL_2$-orbit closures in $\PP(\std \otimes \sl_2^\ast)$ is shown in \cref{fig: orbits std tensor sl2}.
    % https://q.uiver.app/#q=WzAsNCxbMCwwLCJcXFBQKFxcc3RkIFxcb3RpbWVzIFxcc2xfMikiXSxbMSwwLCJcXHtcXERlbHRhID0gMFxcfSJdLFsyLDAsIlxcUFBeMSBcXHRpbWVzIFxcUFBeMiJdLFszLDAsIlxcUFBeMSBcXHRpbWVzIFxcUFBeMSJdLFswLDEsIiIsMCx7InN0eWxlIjp7ImhlYWQiOnsibmFtZSI6Im5vbmUifX19XSxbMSwyLCIiLDAseyJzdHlsZSI6eyJoZWFkIjp7Im5hbWUiOiJub25lIn19fV0sWzIsMywiIiwwLHsic3R5bGUiOnsiaGVhZCI6eyJuYW1lIjoibm9uZSJ9fX1dXQ==
    \begin{figure}[H]
    \adjustbox{scale=1,center}{%
    \begin{tikzcd}
	{\PP(\std \otimes \sl_2^\ast)} & {\{\Delta = 0\}} & {\PP^1 \times \PP^2} & {\PP^1 \times \PP^1}
	\arrow[no head, from=1-1, to=1-2]
	\arrow[no head, from=1-2, to=1-3]
	\arrow[no head, from=1-3, to=1-4]
    \end{tikzcd}
    }
    \captionsetup{width=\linewidth}
    \caption[]{$\SL_2 \times \SL_2$-orbit closures on $\PP(\std \otimes \sl_2^\ast)$, connected by closure. The inclusion $\PP^1 \times \PP^2 \hookrightarrow \PP^5$ is the Segre embedding, the embedding $\PP^1 \times \PP^1 \hookrightarrow \PP^1 \times \PP^2$ is induced by the inclusion of a quadric $\PP^1 \subseteq \PP^2$, and the vanishing locus of $\Delta$ is the dual variety to this quadric.}
    \label{fig: orbits std tensor sl2}
    \end{figure}
\begin{remark}\label{rmk: pairs of binary cubics}
    There is a variant of \cref{prop: reg centr pairs of binary quadratic} for the vector space $V = \std \otimes \Sym^3(\std)$ of pairs of binary \textit{cubic} forms, equipped with its natural $\SL_2 \times \SL_2$-action. We will not use this, so we will just state the relevant results for the sake of completeness (these statements follow from the work of Bhargava-Ho in \cite{bhargava-ho}). It turns out that there is an isomorphism $V\mmod (\SL_2 \times \SL_2) \cong \AA^2$, where the invariants, denoted $a_1$ and $a_3$, are of degrees $2$ and $6$, respectively. In particular, there is an isomorphism 
    $$V(2)\mmod (\SL_2 \times \SL_2) \cong \AA^2(4, 12),$$
    where the symbol $V(2)$ means that the coordinates of $V$ are placed in weight $-2$. In fact, the action of $\SL_2 \times \SL_2$ on $V$ descends to an action of $\SO_4$, and it turns out that $V \cong \so_4^\perp \subseteq \g_2^\ast$ under the embedding $\SO_4 \subseteq \G_2$. (Tantalizingly, $\H^\ast_{\G_2}(\ast; \QQ)$ is a polynomial algebra on classes in cohomological degrees $4$ and $12$, and so the base $\AA^2(4,12)$ can actually be identified with $\spec \H^\ast_{\G_2}(\ast; \ringcoeff)$.)
    
    As in \cref{rmk: binary quartics}, there is a Kostant slice $\kappa: \AA^2 \cong V\mmod (\SL_2 \times \SL_2) \to V$, and it turns out that there is an isomorphism
    $$\AA^2 \times_{V/(\SL_2 \times \SL_2)} \AA^2 \cong \ce_{\Gamma_1(3)}[2],$$
    where $\ce_{\Gamma_1(3)}$ is the universal elliptic curve $y^2 + a_1 xy + a_3 y = x^3$ with a $\Gamma_1(3)$-level structure over $\AA^2$.
    The reader is referred to \cite{bhargava-ho} (see in particular \cite[Line 6 of Table 1]{bhargava-ho}) for a detailed study of this example, from which the above claims can be deduced. I do not know how this example fits into the invariant-theoretic picture of geometric Langlands described in this article and \cite{bzsv, ku-rel-langlands}!
\end{remark}

Finally, using \cref{rmk: so8 minimal nilpotent}, here is the specialization of \cref{conj: langlands nonsplit tn} to $n=4$:
\begin{conjecture}[\cref{conj: langlands nonsplit tn} for $n=4$]\label{conj: langlands nonsplit t4}
    Let $G_4 = \Res_{\cc\pw{t^{1/4}}/\cc\pw{t}} \PGL_2$. Let $\ol{\co}$ denote the minimal nilpotent coadjoint orbit of $\fr{so}_8$, and let the symmetric group $\Sigma_4$ act on $\ol{\co}$ as described in \cite[Section 4.1]{S4-action-SO8}. Then the action of $\SL_2^{\times 4} \subseteq \SO_8$ on $\ol{\co}$ descends to an action of $\SL_2$ on the invariant quotient $\ol{\co}\mmod\Sigma_4$.
    For this action of $\SL_2$, there is a fully faithful functor
    $$\Perf^\sh((\ol{\co}\mmod\Sigma_4)/\SL_2(-2\rho)) \hookrightarrow \Shv_{G_4\pw{t}}^{c}(G_4\ls{t}/\PGL_2\ls{t}; \ringcoeff),$$
    for a certain grading on $\ol{\co}\mmod\Sigma_4$.
\end{conjecture}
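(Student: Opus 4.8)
The plan is to run the argument of \cref{thm: main} for the ramified pair $\PGL_2 \hookrightarrow G_4$, in the way \cref{prop: regular locus binary cubic} underlies \cref{conj: langlands binary cubics}. By the discussion following \cref{conj: langlands nonsplit tn}, the Hamiltonian $\SL_2$-scheme that should appear on the spectral side is
$$\ld{M} := \ol{(\SL_2 \times \sl_2^\ast(2)\mmod\SL_2)/\ld{J}[4]},$$
where $\ld{J}$ is the group scheme of regular centralizers for $\SL_2$ over $B := \sl_2^\ast\mmod\SL_2 \cong \spec\ringcoeff[a^2]$ and $\ld{J}[4]$ is its $4$-torsion subgroup scheme. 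So there are two things to do: (I) establish the ramified analogue of the criteria of \cite[Theorem 3.6.4]{ku-rel-langlands}, which produces the fully faithful functor with spectral side $\Perf^\sh(\ld{M}/\SL_2(-2\rho))$; and (II) identify $\ld{M}$, $\SL_2$-equivariantly and as a graded variety, with the quotient $\ol\co\mmod\Sigma_4$ computed in \cite[Theorems 4.6 and 4.12]{S4-action-SO8}.

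For (I), I would imitate the proof of \cref{thm: main} almost verbatim, with the two modifications already visible in the cases $n = 2,3$. The homology side: by \cref{lem: mult by n on homology}, the degree-$4$ self-map of $\Omega\PGL_2 \simeq \PGL_2\ls{t}/\PGL_2\pw{t}$ is induced by $\cc\pw{t}\hookrightarrow\cc\pw{t^{1/4}}$ and corresponds to multiplication by $4$ on $\ld{J} \cong \spec\H^{\PGL_2}_\ast(\Omega\PGL_2;\ringcoeff)$, so (just as $\ld{J}[2],\ld{J}[3]$ appeared for $n = 2,3$) the appropriate ``regular centralizer'' is $\ld{J}[4]$; checking the group-scheme condition then amounts to $B \times_{\ld{M}/\SL_2} B \cong \ld{J}[4]$, which follows from \cref{prop: stab of kostant} exactly as in the proof of \cref{prop: regular locus binary cubic}(c), by intersecting the $\SL_2^{\times 3}$-picture with a diagonal (more precisely, using the embedding of \cref{rmk: so8 minimal nilpotent} to reduce to $\ld{J}_{0,4}\cap\diag(\SL_2\times B) \cong \ld{J}[4]$, where $\ld{J}_{0,4} = \ker(\ld{J}\times_B\ld{J}\times_B\ld{J}\times_B\ld{J}\xrightarrow{\mathrm{prod}}\ld{J})$). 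The geometry side: one must exhibit a Kostant section $\kappa\colon B\to\ld{M}$ and check that its $\SL_2$-orbit is dense open with complement of codimension $\geq 2$; as in \cref{conj: langlands binary cubics} and \cref{conj: langlands pairs of binary quadratic}, it is codimension $\geq 2$ rather than $\geq 3$ that degrades the equivalence of \cref{thm: main} to a fully faithful functor here. Finally, although $G_4$ is not the base change of a constant group over $\cc$ so that \cite[Theorem 3.6.4]{ku-rel-langlands} does not literally apply, the $\E{2}$-structure recorded in the remark after \cref{lem: mult by n on homology} keeps the formality input \cite[Lemma 2.1.9]{ku-rel-langlands} available, as the paragraph after \cref{conj: langlands nonsplit tn} asserts.

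For (II), I would start from the presentation $\ol\co \cong \ol{(\SL_2^{\times 4}\times B)/\ld{J}_{0,4}}$ of \cref{rmk: so8 minimal nilpotent}, under which the $\Sigma_4$-action of \cite[Section 4.1]{S4-action-SO8} corresponds, via triality, to the simultaneous permutation of the four $\SL_2$-factors and the four $\ld{J}$-factors. The diagonal embeddings $\SL_2\hookrightarrow\SL_2^{\times 4}$ and $\ld{J}[4]\hookrightarrow\ld{J}_{0,4}$, $g\mapsto(g,g,g,g)$ (legitimate because $g^4 = 1$), are $\Sigma_4$-fixed and realize $\ld{M} = \ol{(\SL_2\times B)/\ld{J}[4]}$ as the relevant $\Sigma_4$-symmetric locus inside $\ol\co$; matching this with the explicit $\Sigma_4$-quotient of \cite[Theorems 4.6 and 4.12]{S4-action-SO8} gives the identification, the grading on $\ol\co\mmod\Sigma_4$ being transported from the $(2-2\rho)$-type grading on $\ld{M}$ (equivalently the conical $\GG_m$-action rescaling $\ol\co$). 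The compatibility of the resulting functor with the spherical Hecke category $\Shv_{G_4\pw{t}\times G_4\pw{t}}^{c,\Sat}(G_4\ls{t};\ringcoeff)$ should follow from the moment map $\ld{M}\to\sl_2^\ast$ inherited from the $\so_8$-moment map on $\ol\co$, as in \cref{rmk: sympl on C2 cubed}.

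I expect the main obstacle to be the geometric half (II), together with the codimension bound inside (I). For $n = 1,2,3$ the scheme $\ld{M}$ was an affine space (or visibly $T^\ast\SL_2$), so the ``Hartogs plus affine closure'' bookkeeping was essentially free and the $\SL_2$-orbit stratification was available from \cite{gelfand-hyperdet} and \cite{dur-vidal-cirac}; here $\ld{M}$ is a genuinely singular affine variety, so controlling its $\SL_2$-orbits — hence verifying the codimension bound — requires the orbit analysis of \cite{S4-action-SO8}. A secondary subtlety is that $\ld{J}[4]$ is only quasi-finite, not finite, over $B$: its fibre drops from $\mu_4$ to $\mu_2$ over $a^2 = 0$, where $\ld{J}\cong\mu_2\times\GG_a$ and the additive factor carries no $4$-torsion, so the affine closure and the limiting argument at $a^2\to 0$ require the same care as in the proof of \cref{prop: stab of kostant}.
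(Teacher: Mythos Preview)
The paper does not prove this statement: it is explicitly stated as a \emph{conjecture}, introduced only by the sentence ``Finally, using \cref{rmk: so8 minimal nilpotent}, here is the specialization of \cref{conj: langlands nonsplit tn} to $n=4$,'' with no argument beyond that. So there is no proof to compare your proposal against; your outline is an attempt to supply what the paper leaves open.

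Your step (I) is faithful to how the paper treats $n=2,3$, and you correctly flag that \cite[Theorem 3.6.4]{ku-rel-langlands} does not literally apply in the ramified setting---the paper itself only says these ingredients ``should give an analogue of the criteria,'' so this gap is shared with the paper rather than specific to your write-up. Your observation about $\ld{J}[4]$ being quasi-finite but not finite over $B$ is correct, but note that the same phenomenon already occurs for $\ld{J}[3]$ (trivial fibre at $a^2=0$, $\mu_3$ generically), so it is not a new obstruction.

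Step (II) is where the real content lies, and your argument has a conceptual slip. You propose to realize $\ld{M} = \ol{(\SL_2\times B)/\ld{J}[4]}$ as the ``$\Sigma_4$-symmetric locus inside $\ol\co$'' via the diagonal embeddings, and then match this with $\ol\co\mmod\Sigma_4$. But the $\Sigma_4$-\emph{fixed} locus and the invariant-theoretic \emph{quotient} $\ol\co\mmod\Sigma_4$ are different objects: there is a natural map from the former to the latter, but no reason for it to be an isomorphism (e.g.\ $(\SL_2^{\times 4})\mmod\Sigma_4$ is certainly not $\SL_2$). The identification $\ol\co\mmod\Sigma_4 \cong \ol{(\SL_2\times B)/\ld{J}[4]}$ must instead be extracted directly from the explicit coordinate description in \cite{S4-action-SO8}, and neither you nor the paper carries this out. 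You are right that this is ``the main obstacle''; it is in fact the entire content of the conjecture beyond what \cref{rmk: so8 minimal nilpotent} already records.
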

Let us end with a comment about $\ld{J}[n]$ for arbitrary $n$.
\begin{remark}
    One could try to generalize \cref{prop: regular locus binary cubic} to binary forms of higher degree.
    Let $n\geq 3$, and let $V = \Sym^n(\std)$ denote the $(n+1)$-dimensional vector space of binary forms of degree $n$, so that $V$ admits an action of $\SL_2$. Let $\kappa: \AA^1\mmod (\Z/2) = \spec \ringcoeff[a^2] \to V$ denote the closed immersion sending
    $$a^2\mapsto \begin{cases}
        \frac{1}{2} ((y + a x)^n + (y - a x)^n) & n \text{ even}; \\
        \frac{1}{2a} ((y + a x)^n - (y - a x)^n) & n \text{ odd}.
    \end{cases}$$
    A direct computation then shows that the stabilizer \textit{inside $\ld{J}$} of $\kappa$ is isomorphic to $\ld{J}[n]$. Using the $\SL_2$-action on $V$, one therefore obtains a map 
    $$f: \ol{(\SL_2 \times \AA^1)/\ld{J}[n]} \to \Sym^n(\std).$$
    When $n$ is odd, the vector space $\Sym^n(\std)$ admits a symplectic structure: if we write $\sum_{j=0}^n \binom{n}{j} a_j x^j y^{n-j}$ for a general binary form of degree $n$, the symplectic form (up to scaling) is given by 
    $$\omega = \sum_{j=0}^{(n-1)/2} (-1)^j \binom{n}{j} da_j \wedge da_{n-j}.$$
    Using this formula, one can check explicitly that the following diagram commutes:
    \begin{equation}\label{eq: moment-factorization Sym-odd}
        \xymatrix{
        \ol{(\SL_2 \times \AA^1)/\ld{J}[n]} \ar[r]^-f \ar[d] & \Sym^n(\std) \ar[d]^-\mu \\
        \ol{(\SL_2 \times \AA^1)/\ld{J}} \ar[r]_-\sim & \sl_2^\ast.
        }
    \end{equation}
    here, the vertical map is the moment map for $\Sym^n(\std)$.
    When $n=3$, the map $f$ is an isomorphism by \cref{prop: regular locus binary cubic}, but this will not be true for $n\geq 4$. Indeed, $\kappa$ is generally not a section of an invariant-theoretic quotient map (see \cref{rmk: binary quartics})!

    The above discussion has some consequences for \textit{global} function fields.
    Let $\Sigma$ denote a smooth projective curve over $\cc$ of genus $g$.
    As described in \cite{ginzburg-rozenblyum}, any symplectic $\ld{G}$-representation $V$ and a choice of square root $K_\Sigma^{1/2}$ of the canonical bundle of $\Sigma$ defines a Lagrangian substack $\cL_V \hookrightarrow \Higgs_{\ld{G}}(\Sigma)$ of the moduli stack of $\ld{G}$-Higgs bundles on $\Sigma$. (One could more generally replace $V$ by a graded Hamiltonian $\ld{G}$-variety.) Roughly speaking, this Lagrangian substack consists of those Higgs bundles $(\cP, \theta)$ with $\theta \in \H^0(\Sigma; \ld{\g}^\ast_\cP \otimes K_\Sigma)$ obtained as the moment map image of a section of $V_\cP \otimes K_\Sigma^{1/2}$.
    
    \cref{prop: regular locus binary cubic} and the above discussion together allow us to describe this Lagrangian explicitly when $\ld{G} = \SL_2$ and $V = \Sym^{2n+1}(\std)$. Namely, the Lagrangian $\cL_{\Sym^{2n+1}(\std)} \to \Higgs_{\SL_2}(\Sigma)$ fiberwise (over the Hitchin base) contains the $(2n+1)$-torsion subgroup in the Prym variety; when $n=1$, it is furthermore \textit{exactly} the $3$-torsion in the Prym variety. One can obtain similar descriptions for the other symplectic representations described in this article. For instance, \cref{prop: stab of kostant} implies that if $\ld{G} = \SL_2^{\times 3}$ and $V = \std^{\otimes 3}$, the Lagrangian $\cL_{\std^{\otimes 3}} \to \Higgs_{\SL_2^{\times 3}}(\Sigma)$ is fiberwise (over the Hitchin base) given by the subgroup of the threefold product of the Prym variety consisting of those triples $(L_1, L_2, L_3)$ of line bundles whose tensor product is trivial.
\end{remark}

\bibliographystyle{alphanum}
\bibliography{main}
\end{document}